\title{\bf The automorphism group of a graphon}
\author{\sc L\'aszl\'o Lov\'asz\footnote{Institute of Mathematics,
E\"otv\"os Lor\'and University, Budapest, Hungary. Research supported
by ERC Advanced Research Grant No.~227701.} \rm and \sc Bal\'azs
Szegedy\footnote{Alfr\'ed R\'enyi Mathematical Research Institute,
Budapest, Hungary. Research supported by ERC Consolidator Grant
No.~617747.}}
\date{ {\em Dedicated to the  memory of  \'Akos Seress}}
\long\def\ignore#1{}
\def\proofend{\hfill$\square$}
\def\Aut{{\rm Aut}}
\def\hom{{\sf hom}}
\def\ul#1{[\hskip-0.12em[#1]\hskip-0.12em]}
\def\eps{\varepsilon}
\def\phi{\varphi}
\def\E{{\sf E}}
\def\rk{{\rm rk}}
\def\diag{{\rm diag}}
\def\AA{\mathcal{A}}\def\BB{\mathcal{B}}
\def\FF{\mathcal{F}}
\def\GG{\mathcal{G}}\def\II{\mathcal{I}}
\def\QQ{\mathcal{Q}}
\def\Gbb{\mathbb{G}}
\def\Nbb{\mathbb{N}}
\def\Rbb{\mathbb{R}}
\def\R{\Rbb}\def\N{\Nbb}
\def\one{{\mathbbm1}}
\newtheorem{theorem}{Theorem}
\newtheorem{prop}[theorem]{Proposition}
\newtheorem{lemma}[theorem]{Lemma}
\newtheorem{claim}{Claim}
\newtheorem{corollary}[theorem]{Corollary}
\newtheorem{definition}[theorem]{Definition}
\newtheorem{example}[theorem]{Example}
\newtheorem{remark}[theorem]{Remark}
\newenvironment{proof}{\medskip\noindent{\bf Proof. }}{\hfill$\square$\medskip}
\newenvironment{proof*}[1]{\medskip\noindent{\bf Proof of #1.}}{\hfill$\square$\medskip}
\begin{document}

\maketitle

\tableofcontents

\medskip

\begin{abstract}
We study the automorphism group of graphons (graph limits). We prove
that after an appropriate ``standardization'' of the graphon, the
automorphism group is compact. Furthermore, we characterize the
orbits of the automorphism group on $k$-tuples of points. Among
applications we study the graph algebras defined by finite rank
graphons and the space of node-transitive graphons.
\end{abstract}

\section{Introduction}

Graphons have been introduced as limit objects of convergent
sequences of dense simple graphs, and many aspects of graphs can be
extended to graphons. The goal of this paper is to describe a natural
way to extend the notion of graph automorphisms to graphons. Our
notion of automorphism group satisfies the natural requirement that
it is invariant under weak isomorphism of graphons. (Weakly
isomorphic graphons represent the limit objects of the same
convergent graph sequences.) Thus our study of the automorphisms of
graphons fits well into graph limit theory.

In this paper we heavily use the topological aspects of graph limit
theory developed in \cite{LSz8}. It was shown in \cite{LSz8} that
every graphon has two ``canonical'' representations on metric spaces,
which we call, informally, the {\it neighborhood metric} and the {\it
$2$-neighborhood metric}. These metric spaces depend only on the weak
isomorphism class of the graphon. (In \cite{Hombook}, these are
called the ``neighborhood metric'' and the ``similarity metric''.)
The neighborhood metric space is simpler to define and work with, but
it is not compact in general; the $2$-neighborhood metric space is
compact. The automorphism group acts on each of these as a subgroup
of isometries. It is a rather straightforward consequence of the
compactness of the $2$-neighborhood metric that the automorphism
group is always a compact topological group (Theorem
\ref{THM:AUT-COMP}). This fact is also closely related to (and could
be derived from) a theorem of Vershik and Hab\"ock \cite{VH} on the
compactness of isometry groups of multivariate functions. As a
consequence we prove that for node-transitive graphons the
neighborhood metric is also compact.

The space of graphons (with weakly isomorphic graphons identified) is
compact in a natural topology (defined by the ``cut distance'').
Another result of this paper is that the set of node-transitive
graphons is closed, and hence compact, in this topology. As we will
see, graph limit theory restricted to this closed set gives rise to a
rather interesting limit theory for functions on groups. Such a
theory was initiated in \cite{Sz2}, and it was a crucial component of
the limit approach to higher order Fourier analysis (see \cite{Sz1}).

We give a characterization of the orbits of the automorphism group on
$k$-tuples of points. This generalizes results in \cite{L2} from
finite graphs to graphons, as well as the characterization of weak
isomorphism of graphons by Borgs, Chayes and Lov\'asz \cite{BCL}. We
use this characterization to connect the topic of graph algebras with
group theory. As an application, we give a group theoretic
description of the graph algebras defined by finite rank graphons.

It follows from our results that the limit of a convergent sequence
of finite graphs, each having a node-transitive automorphism group,
is a node-transitive graphon. However, the relationship between the
automorphism groups of the finite graphs and that of the limit
graphon is more involved.

\section{Preliminaries}

\subsection{Graphs and graphons}

A {\it $k$-labeled graph} is a graph (simple or multi) with $k$ of
its nodes labeled $1,\dots,k$ ($k\in\{0,1,2,\dots\}$). We denote by
$\FF_k$ the set of $k$-labeled multigraphs, by $\GG_k$ the set of
$k$-labeled simple graphs, and by $\GG_k^0$ the set of $k$-labeled
simple graphs with nonadjacent labeled nodes. In particular, $\GG_0$
is the set of unlabeled simple graphs.

We will need some special $k$-labeled graphs and multigraphs. We
denote by $K_2$ the (unlabeled) graph with two nodes and one edge,
and by $C_2$ the multigraph consisting of two nodes connected by two
edges. We denote by $K_2^{\bullet}$ and $K_2^{\bullet\bullet}$ the
graph $K_2$ with one and two nodes labeled, respectively;
$C_2^{\bullet}$ and $C_2^{\bullet\bullet}$ are defined analogously.
We denote by $P_n^{\bullet\bullet}$ the path with $n$ nodes, with its
two endpoints labeled.

For two simple graphs $F$ and $G$, let $\hom(F,G)$ denote the number
of homomorphisms (adjacency-preserving maps) $V(F)\to V(G)$. We
define the {\it homomorphism density}
\[
t(F,G)=\frac{\hom(F,G)}{|V(G)|^{|V(F)|}}.
\]

A {\it graphon} consists of a standard probability space $J$ and a
symmetric measurable function $W:~J\times J\to [0,1]$. To simplify
notation, we will omit some letters that may be understood. For the
standard probability space $J$, we let $\BB$ denote the underlying
sigma-algebra and let $\pi$ denote the probability measure. Also, we
write $dx$ instead of $d\pi(x)$ in integrals if there is only one
probability measure considered.

Every graphon $(J,W)$ defines an integral operator $T_W$ on the
Hilbert space $L^2(J)$ by
\[
(T_Wf)(x)=\int_J W(x,y)f(y)\,dy.
\]
We say that $W$ has finite rank if this operator has finite rank
(i.e., its range is a finite dimensional subspace of $L^2(J)$.

For every graphon $(J,W)$ and every graph $F=(V,E)$, we define
\begin{equation}\label{EQ:T-DEF}
t(F,J,W)=\int\limits_{J^V} \prod_{ij\in E} W(x_i,x_j)\,\prod_{i\in
V} dx_i.
\end{equation}
We note that the formula makes sense for multigraphs $F$, but we
exclude loops. We write $t(F,W)$ instead of $t(F,J,W)$ if the
underlying probability space is clear.

Graphons were introduced to describe limit objects of convergent
sequences of dense graphs. A sequence of simple graphs $G_n$ is
called {\it convergent}, if the numerical sequence $t(F,G_n)$
converges for every simple graph $F$. In this case, there is a
graphon $(J,W)$ such that $t(F,G_n)$ converges to $t(F,W)$ for every
simple graph $F$ \cite{LSz1}.

The limiting graphon is not strictly uniquely determined. Quite often
one assumes that $J=[0,1]$ (with the Lebesgue measure). In this
paper, different underlying spaces will be more useful. We say that
two graphons $(J,W)$ and $(J',W')$ are {\it weakly isomorphic}, if
$t(F,W)=t(F,W')$ for every simple graph $F$. Every graphon is weakly
isomorphic to a graphon on $[0,1]$, but this is not always the most
convenient representative of a weak isomorphism class.

Weakly isomorphic pairs of graphons were characterized in \cite{BCL}.
Let $J$ and $L$ be standard probability spaces and let $\phi:~J\to L$
be a measure preserving map. For any function $U:~L\times L\to\R$, we
define the function $U^\phi:~J\times J\to\R$ by
\[
U^\phi(x,y)=U(\phi(x),\phi(y)).
\]
It is clear that if $(L,U)$ is a graphon, then so is $(J,U^\phi)$,
which we call the {\it pullback} of $(L,U)$ along $\phi$. It is easy
to see that the graphons $(L,U)$ and $(J,U^\phi)$ are weakly
isomorphic. It follows that all pullbacks of the same graphon are
weakly isomorphic. The main result of \cite{BCL} asserts that two
graphons are weakly isomorphic if and only if they are pullbacks of
the same graphon.

We can define a sequence of graphons $(W_1,W_2,\dots)$ to be {\it
convergent} with limit graphon $W$ if $t(F,W_n)$ converges to
$t(F,W)$ for every simple graph $F$. (There is a semimetric, called
the ``cut distance'', on the set of graphons that makes this space
compact, and which defines this same notion of convergence. We don't
need the cut distance in this paper, however.)

We can define homomorphism densities of $k$-labeled graphs in
graphons, but these will be $k$-variable functions $J^k\to\R$ rather
than numbers. These {\it restricted homomorphism densities} are
defined by not integrating the variables corresponding to labeled
nodes:
\begin{equation}\label{EQ-REST-HOM}
t_{x_1,\dots,x_k}(F,W)=\int\limits_{J^{V\setminus[k]}}
\prod_{ij\in E} W(x_i,x_j)\,\prod_{i\in
V\setminus[k]} d\pi(x_i).
\end{equation}

\subsection{Graph algebras}

Graph algebras are important algebraic structures associated with
graph parameters. We give a quick introduction to the subject. For
more details see \cite{Hombook}.

For two simple graphs $G,H\in\mathcal{G}_k$, the product $GH$ is
defined as the graph obtained from $G$ and $H$ by identifying
vertices with the same label and by reducing multiple edges. This
product defines a commutative semigroup structure on $\mathcal{G}_k$.
Let $\mathcal{Q}_k$ denote the set of formal $\mathbb{R}$-linear
combinations of elements from $\mathcal{G}_k$. Such linear
combinations are usually called {\it quantum graphs}. The
multiplication extends to $\mathcal{Q}_k$ from $\mathcal{G}_k$ using
the distributive law and thus $\mathcal{Q}_k$ becomes a commutative
algebra. In other words, $\mathcal{Q}_k$ is the semigroup algebra of
$\mathcal{G}_k$.

Let $\ul{G}$ be the graph obtained by removing the labels in the
graph $G$. We extend this notation to quantum graphs by linearity. An
arbitrary graph parameter $f:~\mathcal{G}\rightarrow\mathbb{R}$ can
be extended to quantum graphs by linearity. Similarly, restricted
homomorphism densities can be extended to $k$-labeled quantum graphs
by linearity: if $f=\sum_{i=1}^n a_iF_i$, then
\[
t_{x_1,\dots,x_k}(f,W) = \sum_{i=1}^n a_it_{x_1,\dots,x_k}(F_i,W).
\]

Every graph parameter gives rise to a symmetric bilinear form on
$\QQ_k$ defined by $\langle G,H\rangle=f(\ul{GH})$. Let
$\mathcal{I}_k$ be the set of elements $Q$ in $\mathcal{Q}_k$ such
that $\langle Q,P\rangle_f=0$ for every $P\in\mathcal{Q}_k$. Then
$\mathcal{I}_k$ is an ideal and $\mathcal{Q}_k/\mathcal{I}_k$ is the
{\it graph algebra} corresponding to $f$. The infinite matrix
$M_k:~\mathcal{G}_k\times\mathcal{G}_k\rightarrow\mathbb{R}$ defined
by $M_k(G,H)=f(\ul{GH})=\langle G,H\rangle$ is called the {\it $k$-th
connection matrix} of $f$. It is easy to see that the rank of $M_k$
is the dimension of $\QQ_k/\II_k$.

We will be interested in the special case when $f$ is defined by
$f(G)=t(G,W)$ for some fixed graphon $W$. In this case, $\QQ_k/\II_k$
depends only on the weak isomorphism class of $W$. It was shown in
\cite{LSz1} that in this case the inner product $\langle.,.\rangle$
is positive semidefinite, and hence so are the connection matrices.

There is a concrete representation of $\QQ_k/\II_k$ that will be
convenient to use and that will create a connection between
automorphisms of $W$ and the graph algebras. Let $(J,W)$ be an
arbitrary graphon. We define a map $\psi_k:~\mathcal{Q}_k\rightarrow
L^\infty(J^k)$ by letting $\psi_k(G)$ be the $k$-variable function
$t_{x_1,x_2,\dots,x_k}(G,W)\in L^\infty(J^k)$. We extend this map
linearly to general quantum graphs. Note that $L^\infty(J^k)$ is a
commutative algebra with pointwise multiplication and addition, and
$\psi_k$ is an algebra homomorphism. The kernel of $\psi_k$ is equal
to $\mathcal{I}_k$ and thus the range of $\psi_k$ is isomorphic to
the $k$-th graph algebra of $W$. We denote by $\AA_k=\AA_k(J,W)$ this
subalgebra of $L^\infty(J^k)$.

We will need a subalgebra of $\AA_k$: let $\AA_k^0$ denote the linear
span of functions $\psi_k(G)$, where in $G\in\mathcal{G}_k^0$ (so its
labeled points are non-adjacent). By definition $\AA_1=\AA_1^0$.

\subsection{Metrics on graphons}

For two points $x,y$ of a graphon $(J,W)$, we define their {\it
neighborhood distance} by
\[
r_W(x,y) = \|W(x,.)-W(y,.)\|_1 = \int_J |W(x,z)-W(y,z)|\,dz.
\]
It may happen that $W(x,.)$ is not measurable for some $x$; however,
we can always change $W$ on a set of measure $0$ to make these
one-variable sections of it measurable. We will assume in the sequel
that these functions are measurable.

The distance function $r_W$ is not necessarily a metric, only a
semimetric, meaning that $r_W(x,y)$ may be $0$ for distinct points
$x$ and $y$. Such points are called {\it twins}. How to merge twins
to get a weakly isomorphic graphon for which $r_W$ is a metric, was
described in \cite{BCL} (see also \cite{Hombook}).

As a further step of ``purifying'' a graphon, we can replace the
metric space $(J,r_W)$ by its completion. Furthermore, in this new
topology the underlying probability measure may not have full
support; we may restrict the graphon to the support of the measure
(which is a closed and therefore complete subspace). This procedure
is described in \cite{LSz8}.

We call a graphon $(J,W)$ {\it pure}, if $(J,r_W)$ is a complete
metric space, and $\pi$ has full support (i.e., every open set has
positive measure). The procedure described above implies that every
graphon is weakly isomorphic to a pure graphon. Pure graphons will be
crucial in this paper, even in order to define automorphisms.

It will be sometimes convenient to use the $L^2$-distance instead of
the $L^1$-distance: we consider
\[
d_W(x,y) = \Bigl(\int\limits_J (W(x,z)-W(y,z))^2\,dz\Bigr)^{1/2}.
\]
Since trivially $d_W(x,y)^2\le r_W(x,y)\le d_W(x,y)$, these two
metrics define the same topology. In particular, the metric space
$(J,d_W)$ associated with a pure graphon $(J,W)$ is also complete and
the measure $\pi$ has full support.

One advantage of $d_W$ is that it can be expressed in terms of
restricted homomorphism densities. We consider the $2$-labeled
quantum graph $h$ in Figure \ref{FIG:H}. Then it is easy to check
that
\begin{equation}\label{EQ:DIST-HOM}
d_W(x,y)^2 =t_{xy}(h,W).
\end{equation}

\begin{figure}[htb]
  \centering
  \includegraphics*[height=80pt]{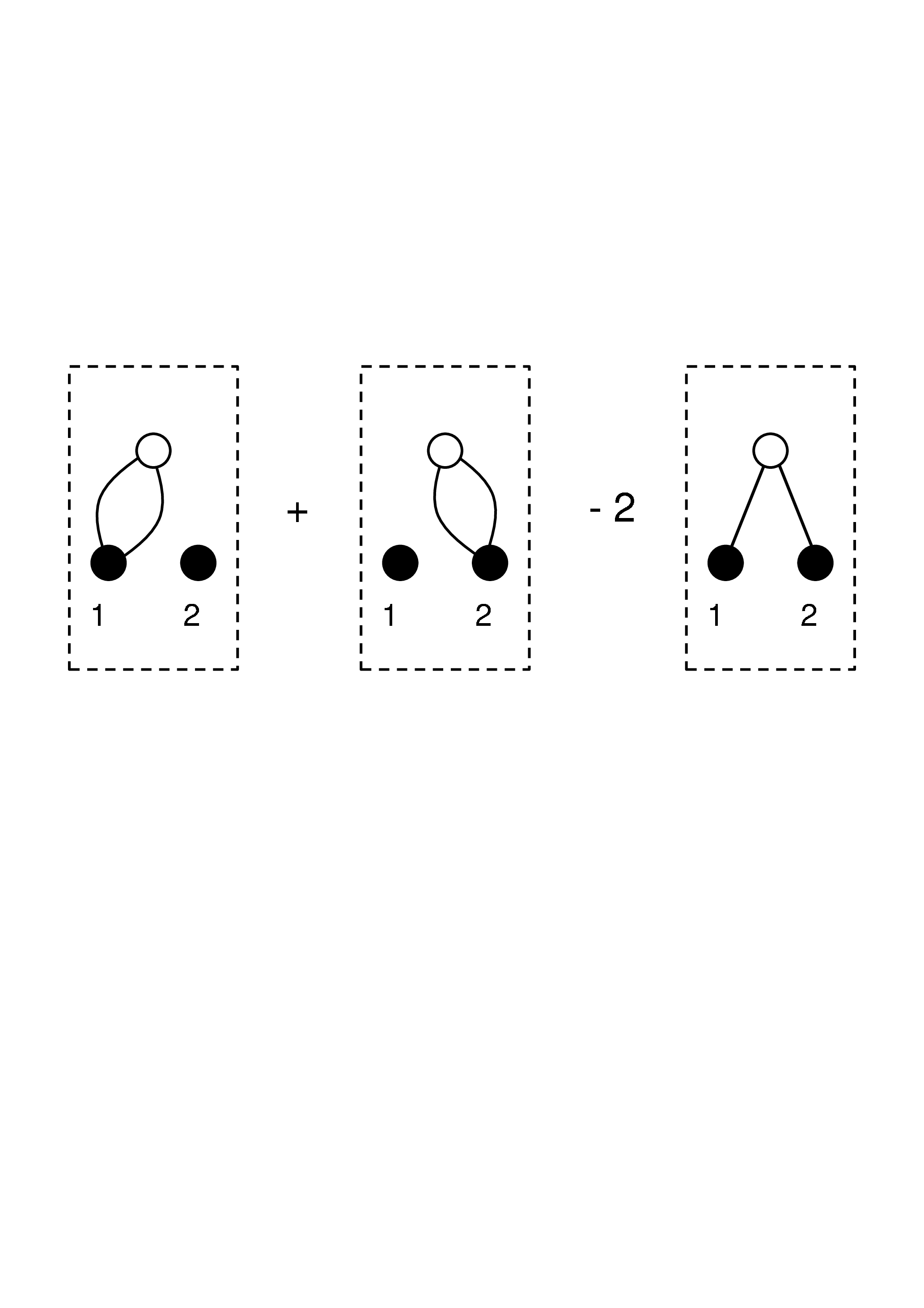}
  \caption{The quantum graph $h$ in the representation of the
  metric $d_W$.}\label{FIG:H}
\end{figure}

While the difference between the metrics $r_W$ and $d_W$ is not
essential, the $2$-neighborhood metric (called the {\it similarity
metric} in \cite{Hombook}) is more substantially different
\cite{LSz3,LSz8}. One way to define it is to introduce the ``operator
square'' of a graphon:
\[
(W\circ W)(x,y) = \int_J W(x,z)W(z,y)\,d\pi(x),
\]
and then consider the neighborhood distance of the graphon $(J,W\circ
W)$:
\[
\overline{r}_W(x,y)=r_{W\circ W}(x,y)
= \int_J \Bigl|\int_J (W(x,u)-W(y,u))W(u,z)\,d\pi(u)\Bigr|\,d\pi(z).
\]
This definition looks artificial, but in fact it has many nice
properties. It is easy to see that $\overline{r}(W)\le r_W$. If
$(J,W)$ is a pure graphon, then $(J, \overline{r}_W)$ is a metric
space (in particular, the distance between distinct points is
positive), which is not necessarily complete, but we can consider its
completion $(\overline{J},\overline{r}_W)$. We can extend the
probability measure to $\overline{J}$ by defining it to be $0$ on the
set of new points. We can also extend the function $W$ to
$\overline{W}:~\overline{J}\times\overline{J}\to[0,1]$ so that
$(\overline{J},\overline{W})$ is a graphon, and the metric
$\overline{r}_{\overline{W}}$ is equal to the completion of the
metric $\overline{r}_W$ (this takes some care). We will not
distinguish $\overline{r}_{\overline{W}}$ and $\overline{r}_W$ in the
sequel. On the other hand the metric $r_{\overline{W}}$ is quite
different: In terms of the $\overline{r}_W$ metric, all open sets
have positive measure, while the set $\overline{J}\setminus J$ of new
points is closed and has measure $0$. On the other hand, in terms of
the $r_{\overline{W}}$ metric, the set $\overline{J}\setminus J$ is
open (of measure zero).

The main property of this completion, which we will need, is that the
space $(\overline{J},\overline{r}_W)$ is compact (\cite{LSz8}; see
also \cite{Hombook}, Corollary 13.28). The metric $\overline{r}_W$
has another important property (\cite{Hombook}, Theorem 13.27):

\begin{prop}\label{PROP:WEAKWW}
If $(J,W)$ is pure graphon, then the metric $\overline{r}_W $ defines
exactly the weak topology on $\overline{J}$. In other words,
$\overline{r}_W(x_n,x)\to0$ $(x,x_n\in \overline{J})$ if and only if
\[
\int\limits_{\overline{J}} \overline{W}(x_n,y) f(y)\,dy\to
\int\limits_{\overline{J}} \overline{W}(x,y) f(y)\,dy
\]
for every bounded measurable function $f:~J\to\R$.
\end{prop}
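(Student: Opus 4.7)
The plan is to prove the two implications separately: the easy direction (weak convergence implies $\overline{r}_W(x_n,x)\to 0$) uses only bounded convergence, while the substantive direction ($\overline{r}_W(x_n,x)\to 0$ implies weak convergence) leans on spectral theory of $T_W$.

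For the easy direction, I would unfold the definition
\[
\overline{r}_W(x_n,x)=\int_{\overline{J}}\Bigl|\int_{\overline{J}}(\overline{W}(x_n,u)-\overline{W}(x,u))\overline{W}(u,z)\,d\pi(u)\Bigr|\,d\pi(z).
\]
For each fixed $z$, the function $u\mapsto\overline{W}(u,z)$ is bounded and measurable, so the hypothesis applied to $f(u)=\overline{W}(u,z)$ sends the inner integral to $0$. Since that inner integral is uniformly bounded by $2$, bounded convergence in $z$ yields $\overline{r}_W(x_n,x)\to 0$.

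For the substantive direction, I would first restate $\overline{r}_W(x_n,x)\to 0$ as $T_W\overline{W}(x_n,\cdot)\to T_W\overline{W}(x,\cdot)$ in $L^1$; combining this with the uniform $L^\infty$ bound on the graphon and the fact that $\pi$ is a probability measure upgrades this $L^1$ convergence to weak $L^2$ convergence, by pairing against bounded test functions directly and then extending to general $L^2$ test functions by density. My target is weak $L^2$ convergence of $\overline{W}(x_n,\cdot)$ itself, since every bounded measurable $f$ lies in $L^2(J)$ and so weak $L^2$ convergence automatically delivers the weak form claimed in the proposition. As $\overline{W}(x_n,\cdot)$ is bounded in $L^2$, it has weak $L^2$ limit points; for any such limit $g$, applying the bounded operator $T_W$ and comparing with what we already know gives $T_W g=T_W\overline{W}(x,\cdot)$, hence $g-\overline{W}(x,\cdot)\in\ker T_W$.

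To close the argument I would use the spectral decomposition $W(x,y)=\sum_i\lambda_i\phi_i(x)\phi_i(y)$ (sum over $\lambda_i\ne 0$) of the compact self-adjoint operator $T_W$: this shows that $\overline{W}(x',\cdot)\in(\ker T_W)^\perp$ for almost every $x'\in J$, and the construction of $\overline{W}$ on $\overline{J}$ from \cite{LSz8} is such that this pointwise inclusion persists for every $x'\in\overline{J}$. Consequently $g$ (being a weak $L^2$ limit of a sequence in $(\ker T_W)^\perp$) and $\overline{W}(x,\cdot)$ both lie in $(\ker T_W)^\perp$, so $g-\overline{W}(x,\cdot)\in\ker T_W\cap(\ker T_W)^\perp=\{0\}$, forcing $g=\overline{W}(x,\cdot)$. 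Thus every weak $L^2$ limit point of $\overline{W}(x_n,\cdot)$ equals $\overline{W}(x,\cdot)$, so the full sequence converges weakly in $L^2$. The main obstacle is precisely this last step --- ensuring that $\overline{W}(x,\cdot)\perp\ker T_W$ holds pointwise at every $x\in\overline{J}$, not merely almost everywhere --- which is where purity of the graphon and the specific nature of the completion from $J$ to $\overline{J}$ are essential.
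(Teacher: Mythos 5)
The paper does not prove this proposition; it cites it as Theorem~13.27 of \cite{Hombook}. So there is no in-paper proof to compare against, and your proposal has to stand on its own.

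Your easy direction is fine. Your hard direction is a sound plan: pass from $L^1$ convergence of $(\overline{W}\circ\overline{W})(x_n,\cdot)$ to strong $L^2$ convergence (using the $L^\infty$ bound), extract a weak $L^2$ limit point $g$ of the bounded sequence $\overline{W}(x_n,\cdot)$, apply the bounded operator $T_W$ to identify $T_Wg=T_W\overline{W}(x,\cdot)$, and conclude $g=\overline{W}(x,\cdot)$ provided both lie in $(\ker T_W)^\perp$. The subsequence principle then gives weak convergence of the whole sequence, which in particular pairs correctly against bounded $f$.

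The genuine gap is exactly the one you flag and then do not close: you assert that $\overline{W}(x',\cdot)\in(\ker T_W)^\perp$ for \emph{every} $x'\in\overline{J}$, but from the spectral decomposition alone this only follows for almost every $x'$, since $(T_Wg)(x')=\langle\overline{W}(x',\cdot),g\rangle$ vanishes a.e.\ but a priori not everywhere when $g\in\ker T_W$. Saying that ``purity and the completion are essential'' is a correct diagnosis but not a proof. Here is how to actually close it. For $x'\in J$ and $g\in\ker T_W$, the function $h(x')=\int_J W(x',y)g(y)\,dy$ satisfies $|h(x')-h(x'')|\le d_W(x',x'')\,\|g\|_2$ by Cauchy--Schwarz, so $h$ is Lipschitz, hence continuous, on $(J,r_W)$; it vanishes a.e., and since purity says $\pi$ has full support, a continuous function vanishing a.e.\ vanishes identically. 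Thus $W(x',\cdot)\perp\ker T_W$ for all $x'\in J$. For $x'\in\overline{J}\setminus J$, one needs to know how $\overline{W}(x',\cdot)$ is built: in the construction of \cite{LSz8}, $\overline{W}(x',\cdot)$ is obtained as the weak $L^2$ limit of $W(x'_m,\cdot)$ along any $J$-sequence with $\overline{r}_W(x'_m,x')\to0$ (the limit exists because $f_s(x'_m)$ is a Cauchy sequence for each eigenfunction $f_s$, and then $\sum_s\lambda_sf_s(x'_m)\langle f_s,\cdot\rangle$ converges by the tail estimate coming from $\sum_s\lambda_s^2f_s(x'_m)^2\le1$). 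A weak limit of elements of the closed subspace $(\ker T_W)^\perp$ stays in $(\ker T_W)^\perp$, which finishes the argument. Without spelling out this step --- and without noting that it is here, and only here, that the spectral facts \eqref{EQ:SUMSMALL0}--\eqref{EQ:SUMSMALL} and the full-support property of a pure graphon enter --- the proof is incomplete. You might also find it slightly shorter to argue directly in eigenfunction coordinates: $\overline{r}_W(x_n,x)\to0$ is equivalent to $f_s(x_n)\to f_s(x)$ for every eigenfunction $f_s$, and once $\overline{W}(x',\cdot)=\sum_s\lambda_sf_s(x')f_s$ pointwise in $x'$ is in hand, weak convergence of $\overline{W}(x_n,\cdot)$ follows by a Cauchy--Schwarz tail bound, bypassing the weak-compactness extraction altogether.
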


A further important property of the metric $\overline{r}_W$, which we
don't use in this paper but is worth mentioning, is that a
decomposition of $\overline{J}$ into sets with small
$\overline{r}_W$-diameter corresponds to a (weak) regularity
partition. We refer to \cite{LSz3,LSz8,Hombook} for the exact
statement of this correspondence.

\subsection{Continuity of restricted homomorphism numbers}

We start with citing Lemma 13.19 from \cite{Hombook}:

\begin{lemma}\label{LEM:CONTIN}
Let $(J,W)$ be a pure graphon and let $F=(V,E)$ be a $k$-labeled
multigraph with nonadjacent labeled nodes. Then
\begin{equation}\label{EQ:RW-CONT}
|t_{x_1,\dots,x_k}(F,W)-t_{y_1,\dots,y_k}(F,W)|\le |E(F)|
\max_{i\le k} r_W(x_i,y_i).
\end{equation}
for all $x_1,\dots,x_k,y_1,\dots,y_k\in J$.
\end{lemma}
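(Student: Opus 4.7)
The plan is a telescoping argument on the labeled coordinates, combined with the elementary bound $|\prod_e a_e - \prod_e b_e|\le \sum_e |a_e-b_e|$ for numbers $a_e,b_e\in[0,1]$.

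First I would write
$$
t_{x_1,\ldots,x_k}(F,W) - t_{y_1,\ldots,y_k}(F,W)
= \sum_{i=1}^k \Bigl(t_{y_1,\ldots,y_{i-1},x_i,x_{i+1},\ldots,x_k}(F,W) - t_{y_1,\ldots,y_{i-1},y_i,x_{i+1},\ldots,x_k}(F,W)\Bigr),
$$
so that in each summand only the $i$-th labeled coordinate changes. It therefore suffices to bound the $i$-th difference by $d_F(i)\,r_W(x_i,y_i)$, where $d_F(i)$ is the degree of the labeled node $i$ in $F$.

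For the $i$-th summand I would expand each restricted density using \eqref{EQ-REST-HOM} as an integral of a product of $W$-values over the edges of $F$; the two products agree on every edge not incident to $i$ and differ only on the edges incident to $i$. Factoring out the common (pointwise $\le 1$) product over non-incident edges, the integrand is bounded in absolute value by $\sum_{e\,:\,i\in e} |W(x_i,z_{j_e})-W(y_i,z_{j_e})|$, where $j_e$ denotes the other endpoint of the edge $e$ incident to $i$. Here the hypothesis that the labeled nodes of $F$ are pairwise nonadjacent is crucial: it forces each $j_e$ to be unlabeled, so $z_{j_e}$ is a Fubini integration variable ranging over $J$, and its integration produces exactly $r_W(x_i,y_i)$, while the remaining integrations contribute factors at most $1$. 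This yields the desired bound $d_F(i)\,r_W(x_i,y_i)$ on the $i$-th summand.

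Summing over $i$ and using the nonadjacency hypothesis once more — each edge of $F$ has at most one labeled endpoint, so $\sum_{i=1}^k d_F(i)\le |E(F)|$ — we obtain
$$
|t_{x_1,\ldots,x_k}(F,W)-t_{y_1,\ldots,y_k}(F,W)| \le \sum_{i=1}^k d_F(i)\,r_W(x_i,y_i) \le |E(F)|\,\max_{i\le k} r_W(x_i,y_i),
$$
as required. No step here is a serious obstacle; the one point that merits attention is that the labeled-nonadjacency assumption must be invoked twice — once so that the variables $z_{j_e}$ adjacent to labeled $i$ are free integration variables (and each integral equals $r_W(x_i,y_i)$ rather than a pointwise value), and once to replace the naive bound $\sum_i d_F(i)\le 2|E(F)|$ by the sharper $|E(F)|$ stated in the lemma.
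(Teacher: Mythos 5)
Your proof is correct, and it is the standard argument; the paper itself does not reprove this lemma but cites it from \cite{Hombook} (Lemma 13.19), where the same telescoping-in-one-labeled-coordinate idea, together with the product inequality for numbers in $[0,1]$, is the natural route. Both of the places where you invoke the nonadjacency hypothesis are exactly where it is needed, and your intermediate bound $\sum_{i=1}^k d_F(i)\,r_W(x_i,y_i)$ is in fact slightly sharper than the stated estimate, from which \eqref{EQ:RW-CONT} follows at once.
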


We need a version of this lemma for the $\overline{r}_W$-distance
instead of the $r_W$-distance. Some special cases of this were proved
in \cite{Hombook}, Section 13.4.

\begin{lemma}\label{LEM:HOM-CONT}
Let $(J,W)$ be a pure graphon, and let $F=(V,E)$ be a $k$-labeled
simple graph with nonadjacent labeled nodes. Then the restricted
homomorphism function $t_{x_1\dots x_k}(F,W)$ is continuous in each
of its variables $x_i$ on the metric space
$(\overline{J},\overline{r}_W)$.
\end{lemma}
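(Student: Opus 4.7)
\medskip
\noindent\textbf{Proof plan.}
The plan is to reduce the statement, for fixed $i$, to a weak-continuity question about the map $x\mapsto \overline{W}(x,\cdot)$, and then bootstrap from the single-variable weak convergence supplied by Proposition~\ref{PROP:WEAKWW} to a statement about products of $m$ copies of $\overline{W}(x,\cdot)$. The first step is a purely combinatorial rewriting. Fix $i\in[k]$ and all other labels $x_j$ ($j\neq i$). Because $F$ has nonadjacent labeled nodes, every neighbor of $i$ in $F$ lies in $V\setminus[k]$; write $N(i)=\{j_1,\dots,j_m\}$ and $V''=(V\setminus[k])\setminus N(i)$. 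Separating the factors involving $x_i$ and integrating out all variables indexed by $V''$ first, the definition \eqref{EQ-REST-HOM} becomes
\[
t_{x_1,\dots,x_k}(F,W)=\int_{J^{N(i)}} h(y_1,\dots,y_m)\prod_{\ell=1}^{m} \overline{W}(x_i,y_\ell)\,\prod_{\ell=1}^{m} d\pi(y_\ell),
\]
where $h:J^m\to[0,1]$ is bounded and measurable (and independent of $x_i$). It thus suffices to show that for any fixed bounded measurable $h$, the function $u(x):=\int_{J^m} h(y)\prod_{\ell=1}^m \overline{W}(x,y_\ell)\,d\pi^m(y)$ is continuous on $(\overline{J},\overline{r}_W)$.

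Take a sequence $x^{(n)}\to x$ in $(\overline{J},\overline{r}_W)$ and set $f_n(y)=\overline{W}(x^{(n)},y)$, $f(y)=\overline{W}(x,y)$. By Proposition~\ref{PROP:WEAKWW}, $\int f_n g\,d\pi\to \int f g\,d\pi$ for every bounded measurable $g$ on $\overline{J}$. Since the $f_n$ lie in the unit ball of $L^\infty(\overline{J})\subset L^2(\overline{J})$ and bounded measurable functions are dense in $L^2(\overline{J})$, an $\varepsilon/3$-argument upgrades this to weak convergence $f_n\rightharpoonup f$ in $L^2(\overline{J})$.

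The core of the proof is now the claim that $f_n^{\otimes m}\rightharpoonup f^{\otimes m}$ in $L^2(\overline{J}^m)$ (applied to the test function $h$ gives $u(x^{(n)})\to u(x)$ and finishes the lemma). For a product test function $g_1\otimes\cdots\otimes g_m$ the inner product factors as $\prod_{\ell=1}^{m}\langle f_n,g_\ell\rangle$, which converges to $\prod_{\ell=1}^m\langle f,g_\ell\rangle$ by continuity of the product and weak convergence of $f_n$. Finite linear combinations of product functions are dense in $L^2(\overline{J}^m)$, and $\|f_n^{\otimes m}\|_{L^2(\overline{J}^m)}=\|f_n\|_{L^2}^m\le 1$ uniformly, so another $\varepsilon/3$-argument extends the convergence from the dense subspace to all of $L^2(\overline{J}^m)$, in particular to $h$. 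The one place that needs genuine care---and the step I expect to be the main technical obstacle---is precisely this extension: one must exploit the uniform $L^\infty$ (hence $L^2$) bound on $f_n$ to pass the limit through the $L^2$-approximation of $h$ by product functions, rather than just using an algebraic/Stone--Weierstrass argument which would not suffice because $\overline{J}$ is not assumed to carry any algebraic structure compatible with the product.
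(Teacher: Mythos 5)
Your proof is correct and follows essentially the same route as the paper's: both fix one labeled variable $x_i$, separate out the factors $\overline{W}(x_i,y_\ell)$ incident to it (the paper does this by deleting node $1$ and labeling its neighbors to form $F'$, which plays exactly the role of your $h$), and reduce continuity to the weak convergence of the $m$-fold tensor product $\overline{W}(x^{(n)},\cdot)^{\otimes m}\rightharpoonup\overline{W}(x,\cdot)^{\otimes m}$. The only difference is that you spell out the density-plus-uniform-$L^2$-boundedness argument for that tensor-product convergence, whereas the paper compresses it into ``it is easy to see''.
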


\begin{proof}
Consider any point $x=(x_1,\dots,x_k)\in \overline{J}^k$, and let
$y_1,y_2,\dots\in \overline{J}$ be such that
$\overline{r}_W(y_n,x_1)\to0$ if $n\to\infty$. We want to show that
\begin{equation}\label{EQ:GOAL}
t_{y_nx_2\dots x_k}(F,W)\to t_{x_1\dots x_k}(F,W) \quad(m\to\infty).
\end{equation}

Let $N(1)=\{k+1,\dots,k+r\}$, and let $F'$ be obtained from $F$ by
deleting node $1$ and labeling nodes $k+1,\dots,k+r$. Then
\[
t_{y_nx_2\dots x_k}(F,W) = \int\limits_{J^r}
\prod_{i=k+1}^{k+r} W(y_n,z_i)
t_{x_2\dots x_k z_{k+1}\dots z_{k+r}}(F',W)\,dz_{k+1}\dots dz_{k+r}.
\]
The condition $\overline{r}_W(y_n,x_1)\to0$ implies that $W(y_n,.)\to
W(x_1,.)$ weakly as $n\to\infty$ (\cite{Hombook}, Theorem 13.7). It
is easy to see that this implies that $\prod_{i=k+1}^{k+r} W(y_n,z_i)
\to \prod_{i=k+1}^{k+r} W(x_1,z_i)$ (weakly as a function of
$(z_{k+1},\dots,z_{k+r})$), which in turn implies that
\begin{align*}
t_{y_nx_2\dots x_k}(F,W) \to &\int\limits_{J^r}
\prod_{i=k+1}^{k+r} W(x_1,z_i)
t_{x_2\dots x_k z_{k+1}\dots z_{k+r}}(F',W)\,dz_{k+1}\dots dz_{k+r}\\
&= t_{x_1x_2\dots x_k}(F,W),
\end{align*}
as claimed.
\end{proof}

Let us discuss the restrictions in these lemmas. It is obvious that
these lemmas do not remain valid if we allow edges between labeled
nodes: for example, $W(x,y)=t_{xy}(K_2^{\bullet\bullet},W)$ itself is
not necessarily continuous. Lemma \ref{LEM:CONTIN} implies that
$t_{x_1,\dots,x_k}$ is continuous (even Lipschitz) in the
neighborhood distance, simultaneously in all variables. Lemma
\ref{LEM:HOM-CONT}, however, fails to hold in this stronger sense;
see Example \ref{EXA:SYM} below (adapted from \cite{Hombook}, Example
13.30). This example also shows that in Lemma \ref{LEM:HOM-CONT} we
have to restrict $F$ to simple graphs. Inequality \eqref{EQ:RW-CONT}
also shows that, for a fixed $F$, the difference
$|t_x(F,W)-t_y(F,W)|$ can be estimated by $\max_i r_W(x_i,y_i)$,
independently of $W$. Example \ref{EXA:NON-LIP} below shows that,
even in the case $k=1$, no such estimate can be given in terms of
$\overline{r}_W(x,y)$.

\begin{example}\label{EXA:SYM}
For $y\in[0,1)$, let $y=0.y_1y_2\dots$ be the binary expansion of
$y$. Define $U(x,y)=y_k$ for $0\le y\le 1$ and $2^{-k-1} \le x\le
2^{-k}$. Define $U(0,y)=1/2$ for all $y$. This function is not
symmetric, so we put it together with a reflected copy to get a
graphon:
\[
W(x,y)=
 \begin{cases}
  U(2x,2y-1), & \text{if $x\le1/2$ and $y\ge1/2$}, \\
  U(2y,2x-1), & \text{if $x\ge1/2$ and $y\le1/2$},\\
  0, & \text{otherwise}.
 \end{cases}
\]
Let $u_k\in[2^{-1}+2^{-k},2^{-1}+2^{-k-1})$, then (as noted in
\cite{Hombook}) the sequence $(u_1,u_2,\dots)$ converges to the point
$0$ in the metric $\overline{r}_W$. On the other hand, for the
$3$-node path labeled at both endpoints
\[
t_{u_n}(C_2^{\bullet},W)=t_{u_n,u_n}(P_3^{\bullet\bullet},W) = \frac14,
\]
but
\[
t_0(C_2^{\bullet},W)=t_{0,0}(P_3^{\bullet\bullet},W) = \frac18,
\]
showing that $t_x(C_2^{\bullet},W)$ is not continuous at $x=0$, and
that $t_{x,y}(P_2^{\bullet\bullet},W)$, as a function of $x$ and $y$,
is not continuous at $(0,0)$.
\end{example}

\begin{example}\label{EXA:NON-LIP}
Consider the weighted graph $H$ given by the matrix of edgeweights
\[
A=\begin{pmatrix}
1/4&1/2&0&1\\
1/2&1&1&0\\
0&1&0&0\\
1&0&0&0
\end{pmatrix}
\]
and the vector of nodeweights
\[
b=\begin{pmatrix}
2/3-\eps\\
1/3-\eps\\
\eps\\
\eps
\end{pmatrix}.
\]
This weighted graph $H$ can be considered as a pure graphon with a
$4$-point underlying space. Then $H\circ H=H'$ is the weighted graph
given by the matrix of edgeweights
\[
A'=A\,\diag(b)A = \begin{pmatrix}
1/8&1/4&2/9&2/9\\
1/4&1/2&1/9&2/9\\
1/6&1/3&0&0\\
1/6&1/3&0&0
\end{pmatrix}+O(\eps),
\]
and the same nodeweights as before. Let $a$ and $b$ be the last two
nodes, then $\overline{r}_H(a,b) =O(\eps)$, but
$t_a(K_2^\bullet,W_H)=1/3-\eps$ and $t_b(K_2^\bullet,W_H)= 2/3-\eps$.
\end{example}

We have seen that excluding edges between the labeled nodes is
essential in both previous lemmas. The next lemma expresses $W$ in
terms of restricted homomorphism numbers for graphs with nonadjacent
labeled nodes, and gives a (rather weak, but still useful) remedy for
this restriction. We consider two sequences of quantum graphs $f_n$
and $g_n$ (Figure \ref{FIG:GG}), and define
\begin{equation}\label{EQ:UN-DEF}
U_n(x,y) = \frac{t_{xy}(g_n,W)}{t_{x}(f_n,W)}.
\end{equation}

\begin{figure}[htb]
  \centering
  \includegraphics*[height=160pt]{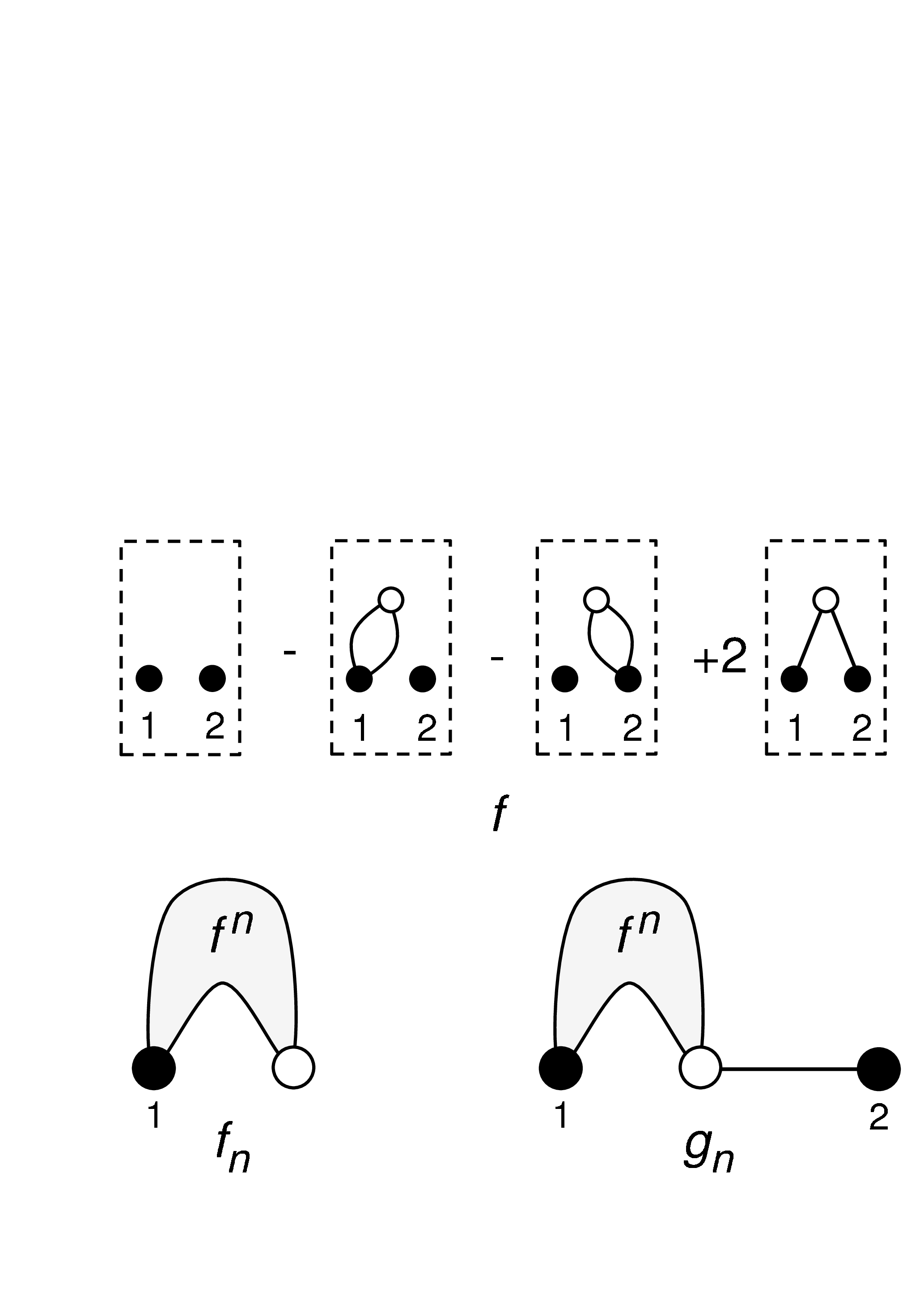}
  \caption{Quantum graphs $f$, $f_n$ and $g_n$. Here
  $f^n$ is obtained by gluing together $n$ copies of $f$
  along the labeled nodes; one of these nodes is then unlabeled
  (shown in white).}\label{FIG:GG}
\end{figure}

\begin{lemma}\label{LEM:FORMULA}
Let $(J,W)$ be a pure graphon, and let $U_n:~J^2\to[0,1]$ be defined
by \eqref{EQ:UN-DEF}. Then $\|U_n-W\|_1\to 0$ $(n\to\infty)$.
\end{lemma}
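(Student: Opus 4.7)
My plan is to (a) interpret $U_n(x,y)$ as a weighted conditional expectation of $W(z,y)$ over $z\in J$, (b) bound $\|U_n-W\|_1$ by an expected neighborhood distance under those weights, and (c) show concentration of the weights at $z=x$ using the compactness of $(\overline{J},\overline{r}_W)$ together with the continuity results already established.

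First, I would unfold the gluing construction. Since $f^n$ is formed by identifying the two labeled vertices of $n$ disjoint copies of $f$, the unlabeled parts of the copies decouple under integration, so $t_{xz}(f^n,W)=F(x,z)^n$ with $F(x,z):=t_{xz}(f,W)$. Reading off the figure, $f_n$ is $f^n$ with the second label dropped and $g_n$ is $f^n$ with the second label dropped and a pendant edge from the (now unlabeled) former label to a fresh label $y$, so
\[
t_x(f_n,W)=\int_J F(x,z)^n\,dz,\qquad
t_{xy}(g_n,W)=\int_J F(x,z)^n\,W(z,y)\,dz,
\]
and therefore
\[
U_n(x,y)=\int_J W(z,y)\,\mu_{x,n}(dz),
\]
where $\mu_{x,n}$ is the probability measure on $J$ with density proportional to $F(x,\cdot)^n$.

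Second, applying $|\cdot|$ inside the inner integral and integrating over $y$, Fubini gives
\[
\int_J |U_n(x,y)-W(x,y)|\,dy\;\le\;\int_J r_W(x,z)\,\mu_{x,n}(dz),
\]
and, since $r_W\le 1$ on a probability space, dominated convergence then reduces the claim $\|U_n-W\|_1\to0$ to showing $\int_J r_W(x,z)\,\mu_{x,n}(dz)\to 0$ for a.e.\ $x$.

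Third, I would pass to the compact completion $(\overline{J},\overline{r}_W)$ of \cite{LSz8}. By Lemma~\ref{LEM:HOM-CONT} the kernel $F(x,\cdot)$ extends continuously to $\overline{J}$, so by compactness it attains its maximum; the design of $f$ in the figure (via Cauchy--Schwarz in $L^2(J)$ for the vectors $W(x,\cdot)$) gives $F(x,z)\le F(x,x)$ with equality only on the $\overline{r}_W$-equivalence class of $x$, which in the pure completion is $\{x\}$. A standard Laplace-principle argument for $F(x,\cdot)^n$ then shows that any weak limit of $\mu_{x,n}$ on $\overline{J}$ equals $\delta_x$, so $\mu_{x,n}\to\delta_x$ weakly. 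Finally, $r_W(x,\cdot)$ is continuous on $\overline{J}$: weak convergence $z_n\to x$ in $\overline{r}_W$ gives $W(z_n,\cdot)\to W(x,\cdot)$ weakly in $L^2$ by Proposition~\ref{PROP:WEAKWW}, while continuity of $\|W(\cdot,\cdot)\|_2^2$ (another restricted density of a simple graph with non-adjacent labels, to which Lemma~\ref{LEM:HOM-CONT} applies) upgrades weak to strong $L^2$-convergence, and hence $L^1$-convergence. Combining weak convergence of $\mu_{x,n}$ with continuity of $r_W(x,\cdot)$ yields $\int r_W(x,z)\,\mu_{x,n}(dz)\to r_W(x,x)=0$, as needed.

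\textbf{Main obstacle.} The Laplace concentration step is the crux: without care the supremum of $F(x,\cdot)$ might be attained away from $x$ (for example because $F(z,z)$ could exceed $F(x,x)$), so the specific choice of $f$ in the figure must ensure $F(x,z)\le F(x,x)$ with equality precisely on the twin class of $x$. Once the maximizers are pinned down, compactness of $\overline{J}$ together with continuity of $F$ and $r_W$ reduces the remainder to routine weak-convergence/dominated-convergence bookkeeping.
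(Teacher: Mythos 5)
Your overall skeleton matches the paper's proof: rewrite $U_n(x,y)$ as a weighted average $\int W(z,y)\,\mu_{x,n}(dz)$ with $\mu_{x,n}\propto t_{xz}(f,W)^n\,d\pi(z)$, bound $\int_J|U_n(x,y)-W(x,y)|\,dy$ by $\int r_W(x,z)\,\mu_{x,n}(dz)$, show the weights concentrate at $z=x$, and finish with dominated convergence. The paper also identifies $t_{xz}(f,W)=1-d_W(x,z)^2$, so your observation that the maximizer of $F(x,\cdot)$ is $x$ itself is correct.

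However, the concentration step as you set it up has a genuine gap. You route the Laplace argument through the compact completion $(\overline{J},\overline{r}_W)$ and invoke continuity of $F(x,\cdot)$ and of $\|W(\cdot,\cdot)\|_2^2$ there, citing Lemma~\ref{LEM:HOM-CONT}. But that lemma applies only to \emph{simple} graphs with nonadjacent labeled nodes, whereas $\|W(x,\cdot)\|_2^2=t_x(C_2^\bullet,W)$ and, more generally, $F(x,z)=1-d_W(x,z)^2$ are densities of quantum \emph{multigraphs}. Example~\ref{EXA:SYM} in the paper is constructed precisely to show that $t_x(C_2^\bullet,W)$ can fail to be continuous on $(\overline{J},\overline{r}_W)$; hence your upgrade from weak to strong $L^2$ convergence of $W(z_n,\cdot)$ fails, $r_W(x,\cdot)$ need not be $\overline{r}_W$-continuous, and $F(x,\cdot)$ need not extend continuously to $\overline{J}$. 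The weak-limit identification $\mu_{x,n}\to\delta_x$ and the passage to $\int r_W(x,z)\,\mu_{x,n}(dz)\to 0$ therefore do not follow as stated.

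The fix is to run the concentration argument in $(J,r_W)$ (equivalently $(J,d_W)$), where $F(x,\cdot)=1-d_W(x,\cdot)^2$ is Lipschitz, and to replace compactness by the fact that $\pi$ has full support in a pure graphon: for every $\eps>0$ the ball $\{z: d_W(x,z)<\eps\}$ has positive measure, which forces $\mu_{x,n}(\{d_W(x,\cdot)\ge\eps\})\to 0$ and hence $\int r_W(x,z)\,\mu_{x,n}(dz)\to 0$. This is exactly what the paper does: after Cauchy--Schwarz it reduces the claim to showing $\int(1-d_W(x,u)^2)^n d_W(x,u)\,du\,\big/\int(1-d_W(x,u)^2)^n\,du\to 0$, which needs no topology on $\overline{J}$ at all. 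Your detour through $\overline{J}$ both introduces the error and is unnecessary.
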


\begin{proof}
We have
\[
|U_n(x,y)-W(x,y)| =
\frac{|t_{xy}(g_n,W)-W(x,y)t_{x}(f_n,W)|}{t_{x}(f_n,W)}.
\]
Here the numerator can be expressed as
\begin{align*}
&|t_{xy}(g_n,W)-W(x,y)t_{x}(f_n,W)|
= \Bigl|\int\limits_J t_{xu}(f,W)^n(W(u,y)-W(x,y))\,du\Bigr|\\
&\le \int\limits_J t_{xu}(f,W)^n|W(u,y)-W(x,y)|\,du
= \int\limits_J (1-d_W(x,u)^2)^n|W(u,y)-W(x,y)|\,du,
\end{align*}
while the denominator is
\[
t_{x}(f_n,W) = \int\limits_J (1-d_W(x,y)^2)^n\,dy.
\]
Integrating over $y$, and using Cauchy--Schwarz, we get
\begin{align*}
\int_J |U_n(x,y)-W(x,y)| \,dy &\le
\frac{\int_J (1-d_W(x,u)^2)^nr_W(x,y)\,du}{\int_J (1-d_W(x,y)^2)^n\,du}\\&\le
\left(\frac{\int_J (1-d_W(x,u)^2)^nd_W(x,u)\,du}{\int_J (1-d_W(x,y)^2)^n\,du}\right)^{1/2}
\end{align*}
It is not hard to see that the right hand side tends to $0$ as
$n\to\infty$, which implies the lemma (in fact, a little more:
$U_n(x,.)\to W(x,.)$ in the $L^1$ metric for every $x$).
\end{proof}

\section{Compactness of the automorphism group}

\subsection{Automorphisms of graphons}

It only makes sense to define automorphisms of pure graphons.

Of course, one could define an ``automorphism'' of any graphon
$(J,W)$ as an invertible measure preserving map $\sigma:~J\to J$ such
that $W(x^\sigma,y^\sigma)=W(x,y)$ for almost all $x,y\in J$.
However, there is a lot of trouble with this notion: weakly
isomorphic graphons will have wildly different automorphism groups.
An example with many automorphisms is a stepfunction $W$: here
$\Aut(W)$ contains the group of all invertible measure preserving
transformations that leave the steps invariant (in addition to all
the automorphisms of the corresponding weighted graph). Note,
however, that if we purify a stepfunction, then we get a finite
weighted graph, and so the large and ``ugly'' subgroups consisting of
measure preserving transformations of the steps disappear. Another
problem would be that any permutation of points of a zero-measure set
should be considered an automorphism, so every graphon would have a
transitive automorphism group.

\begin{definition}\label{DEF:AUTO}
Let $(J,W)$ be a pure graphon. A measure preserving bijection
$\sigma:~J\to J$ is called an {\it automorphism} of $(J,W)$ if, for
every $x\in J$, the equality $W(x^\sigma,y^\sigma)=W(x,y)$ holds for
almost all $y\in J$.
\end{definition}

Note the change in the phrasing of the last condition: it is stronger
that requiring that $W(x^\sigma,y^\sigma)=W(x,y)$ for almost all
$x,y\in J$. This modification will exclude ``automorphisms'' like
interchanging two points.

(The simpler but inadequate definition is given in \cite{Hombook};
the results announced there hold true with the definition given
here.)

It is clear that every automorphism preserves the distances $r_W$ and
$\overline{r}_W$, and hence it extends to an automorphism of
$(\overline{J},\overline{W})$. The points of $\overline{J}\setminus
J$ can be identified in the graphon $(\overline{J},\overline{W})$ by
the property that every $r_{\overline{W}}$-neighborhood of them has
positive measure. So the automorphism groups of a pure graphon
$(J,W)$ and its completion $(\overline{J},\overline{W})$ are
essentially the same. In this section, we will mostly work with
$(\overline{J},\overline{W})$.

We can endow $\Aut(W)$ with a metric (and through this, with a
topology) by
\[
d(\sigma,\tau)=\sup_{x\in \overline{J}}
\overline{r}_W(x^\sigma,x^\tau).
\]

Not every isometry of the metric space $(J,r_W)$ (or of the metric
space $(\overline{J},\overline{r}_W)$) is an automorphism.

\begin{example}\label{EXA:ISOM-NOT}
Let $([0,1],W)$ be the pure graphon $W(x,y)=xy$, and consider the
direct sum $([0,1],W)\oplus ([0,1],1-W)$. This is pure as well, and
interchanging the two components is an isometry but not an
automorphism in general.
\end{example}

The following technical lemma shows that a slight apparent weakening
of the second condition in the definition of an automorphism leads to
the same concept. We will formulate it for the
$\overline{r}_W$-metric; for the $r_W$-metric the proof is similar
(in fact, much simpler).

\begin{lemma}\label{LEM:ISO-NULL}
Let $(J,W)$ be a pure graphon, and let $\phi:~\overline{J}\to
\overline{J}$ be a bijective measure preserving map that is an
isometry of $(\overline{J},\overline{r}_W)$ and satisfies
$\overline{W}^\phi=\overline{W}$ almost everywhere. Then $\phi$ is an
automorphism.
\end{lemma}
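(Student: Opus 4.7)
The plan is to bootstrap the almost-everywhere invariance $\overline{W}^\phi=\overline{W}$ into the strictly stronger condition in Definition \ref{DEF:AUTO} (for \emph{every} $x$, equality for a.e.\ $y$), by representing $W$ as a limit of quantities built from restricted homomorphism densities with nonadjacent labels and exploiting the separate continuity provided by Lemma \ref{LEM:HOM-CONT}.

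First I would establish, for every labeled quantum graph $F$ whose labeled nodes are pairwise nonadjacent, the a.e.\ invariance
\begin{equation*}
t_{\phi(x_1),\dots,\phi(x_k)}(F,W)=t_{x_1,\dots,x_k}(F,W)\qquad\text{for a.e. }(x_1,\dots,x_k)\in\overline{J}^k.
\end{equation*}
To see this, substitute $u_i\mapsto\phi(u_i)$ in the integration variables of the left-hand side (legitimate because $\phi$ is measure preserving) to rewrite it as $\int\prod_{ij\in E}\overline{W}(\phi(c_i),\phi(c_j))\,dc$, with $c_i=x_i$ for labeled $i$. Using $\overline{W}^\phi=\overline{W}$ a.e.\ together with Fubini applied edge by edge, for a.e.\ $(x_1,\dots,x_k)$ all these factors coincide with $\overline{W}(c_i,c_j)$ for a.e.\ choice of the remaining coordinates, so the integral collapses to $t_{x_1,\dots,x_k}(F,W)$.

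Next I would promote this a.e.\ identity to a pointwise one on $\overline{J}^k$. By Lemma \ref{LEM:HOM-CONT}, extended by linearity, both sides are continuous in each variable on $(\overline{J},\overline{r}_W)$; the right composition with $\phi$ is harmless because $\phi$ is an $\overline{r}_W$-isometry. Since $\pi$ gives positive mass to every nonempty $\overline{r}_W$-open set, any function continuous in one variable and vanishing a.e.\ in that variable must vanish identically in that variable. Iterating this coordinate by coordinate, using Fubini at each step to select a full-measure slice on which a.e.\ equality persists in the remaining coordinates, yields pointwise equality throughout $\overline{J}^k$.

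Applying the above to the quantum graphs $f_n$ and $g_n$ of Lemma \ref{LEM:FORMULA} (both of which have nonadjacent labeled nodes) gives $U_n(\phi(x),\phi(y))=U_n(x,y)$ pointwise on $\overline{J}\times\overline{J}$ for every $n$. The parenthetical strengthening at the end of the proof of Lemma \ref{LEM:FORMULA} asserts $\|U_n(x,\cdot)-W(x,\cdot)\|_1\to0$ for every $x$. Fix $x\in J$: pass to a subsequence along which $U_n(x,y)\to W(x,y)$ for a.e.\ $y$; because $\phi$ preserves $\pi$, a further subsequence extraction makes $U_n(\phi(x),\phi(y))\to W(\phi(x),\phi(y))$ for a.e.\ $y$ as well. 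The pointwise identity $U_n(\phi(x),\phi(y))=U_n(x,y)$ then forces $W(\phi(x),\phi(y))=W(x,y)$ for a.e.\ $y$, which is exactly the condition in Definition \ref{DEF:AUTO}. The main obstacle is the separate-continuity upgrade in the third paragraph: joint continuity fails by Example \ref{EXA:SYM}, so the argument cannot be reduced to a single density step and must instead proceed one coordinate at a time, with careful Fubini bookkeeping to keep Lemma \ref{LEM:HOM-CONT} applicable at each stage.
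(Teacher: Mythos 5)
Your proof takes a genuinely different route from the paper's. The paper works directly with the definition of $\overline{r}_W$: it calls a point $x$ ``nice'' if the automorphism identity holds for a.e.\ $y$, observes a.e.\ points are nice, approximates an arbitrary $x$ by nice points, and passes to the limit inside the iterated integral defining $\overline{r}_W$ to eventually obtain $\int(\overline{W}(x^\phi,y^\phi)-\overline{W}(x,y))^2\,dy=0$. You instead upgrade the a.e.\ invariance to a pointwise statement about restricted homomorphism densities and then reconstruct $W$ via Lemma~\ref{LEM:FORMULA}. This is a reasonable and more ``algebraic'' plan, and your a.e.-to-pointwise upgrade in the second paragraph (Fubini slicing coordinate by coordinate, combined with separate $\overline{r}_W$-continuity and full support) is correct for the graphs to which Lemma~\ref{LEM:HOM-CONT} applies.

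The gap is in the third paragraph. The quantum graphs $f_n$ and $g_n$ of Lemma~\ref{LEM:FORMULA} are \emph{multigraph} quantum graphs: $1-d_W(x,u)^2=t_{xu}(f,W)$ involves $\int W(x,z)^2\,dz=t_x(C_2^\bullet,W)$, i.e.\ a constituent with a doubled edge, and the same propagates to $f^n$, $f_n$, $g_n$. Lemma~\ref{LEM:HOM-CONT} is stated only for \emph{simple} $k$-labeled graphs with nonadjacent labels, and the paper explicitly warns (Example~\ref{EXA:SYM}) that $t_x(C_2^\bullet,W)$ is discontinuous in $\overline{r}_W$, so the restriction is essential. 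Hence your separate-continuity upgrade from a.e.\ to pointwise cannot be applied to $f_n$ and $g_n$ directly, and the asserted pointwise identity $U_n(\phi(x),\phi(y))=U_n(x,y)$ on all of $\overline{J}\times\overline{J}$ is not justified. The argument can be repaired: your upgrade \emph{does} give, for every fixed pair $(x,y)$, the identity $t_{\phi(x)\phi(y)}(F,W)=t_{xy}(F,W)$ for all \emph{simple} $F$ with nonadjacent labels (and likewise in one variable); Corollary~\ref{COR:ORBITS2}(b) then extends this, pair by pair, to all multigraphs with nonadjacent labels, which is exactly what your final step needs. But that invocation is missing, and without it the chain of reasoning breaks precisely at the point the paper flags as dangerous.
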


\begin{proof}
Let us call a point $x\in\overline{J}$ {\it nice}, if
$\overline{W}(x,y)=\overline{W}(x^\phi,y^\phi)$ for almost all $y\in
J$. The condition that $\overline{W}^\phi=\overline{W}$ almost
everywhere implies that almost all points are nice, but we want to
show that {\it all} points are nice.

To this end, let us fix $x\in J$. Since the measure has full support
in $(\overline{J},\overline{r}_W)$, every neighborhood of $x$ has
positive measure, and hence there is a sequence of nice points $x_n$
such that $\overline{r}_W(x_n,x)\to 0$. This means that
\begin{equation}\label{EQ:X2X}
\int\limits_J \Bigl| \int\limits_J
(\overline{W}(x,y)-\overline{W}(x_n,y))\overline{W}(y,z)\,dy\Bigr|\,dz
\to 0 \quad (n\to\infty).
\end{equation}
Also, since $\phi$ is an isometry,
\begin{equation*}\label{EQ:PHIX2PHIX}
\int\limits_J \Bigl| \int\limits_J (\overline{W}(x^\phi,y)
-\overline{W}(x_n^\phi,y))\overline{W}(y,z)\,dy\Bigr|\,dz
\to 0 \quad (n\to\infty).
\end{equation*}
Since $\phi$ is measure preserving, we can replace $y$ by $y^\phi$
and $z$ by $z^\phi$ in this equation:
\begin{equation*}\label{EQ:PHIX2PHIY}
\int\limits_J \Bigl| \int\limits_J (\overline{W}(x^\phi,y^\phi)
-\overline{W}(x_n^\phi,y^\phi))\overline{W}(y^\phi,z^\phi)\,dy\Bigr|\,dz
\to 0 \quad (n\to\infty).
\end{equation*}
Since the points $x_n$ are nice,
$\overline{W}(x_n^\phi,y^\phi)=\overline{W}(x_n,y)$ for almost all
$y$, and similarly $\overline{W}(y^\phi,z^\phi)=\overline{W}(y,z)$
for almost all pairs $(y,z)$. This implies that
\begin{equation*}\label{EQ:PHIX2YZ}
\int\limits_J \Bigl| \int\limits_J (\overline{W}(x^\phi,y^\phi)
-\overline{W}(x_n,y))\overline{W}(y,z)\,dy\Bigr|\,dz
\to 0 \quad (n\to\infty).
\end{equation*}
Comparing with \eqref{EQ:X2X}, we get
\begin{equation*}\label{EQ:PHIX2YZZ}
\int\limits_J \Bigl| \int\limits_J (\overline{W}(x^\phi,y^\phi)
-\overline{W}(x,y))\overline{W}(y,z)\,dy\Bigr|\,dz
\to 0 \quad (n\to\infty).
\end{equation*}
The left hand side does not depend on $n$, and hence it follows that
\begin{equation*}\label{Z0}
\int\limits_J (\overline{W}(x^\phi,y^\phi)
-\overline{W}(x,y))\overline{W}(y,z)\,dy=0
\end{equation*}
for almost all $z$. We can choose a sequence $z_n$ for which this
holds and for which $\overline{r}_{\overline{W}}(z_n,x)\to 0$. It is
easy to see that this implies that
\begin{equation*}\label{Z1}
\int\limits_J (\overline{W}(x^\phi,y^\phi)
-\overline{W}(x,y))\overline{W}(y,x)\,dy=0.
\end{equation*}
A similar argument gives
\begin{equation*}\label{Z2}
\int\limits_J (\overline{W}(x^\phi,y^\phi)
-\overline{W}(x,y))\overline{W}(y^\phi,x^\phi)\,dy=0.
\end{equation*}
Subtracting, we get
\[
\int\limits_J (\overline{W}(x^\phi,y^\phi)
-\overline{W}(x,y))^2\,dy=0,
\]
which implies that $\overline{W}(x^\phi,y^\phi) =\overline{W}(x,y)$
for almost all $y$. This proves the lemma.
\end{proof}

\subsection{Compactness}

The following fact is stated (without proof) in Section 13.5 of
\cite{Hombook}.

\begin{theorem}\label{THM:AUT-COMP}
The automorphism group of a pure graphon is compact.
\end{theorem}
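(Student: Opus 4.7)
The plan is to exhibit $\Aut(W)$ as a closed subgroup of the isometry group of the compact metric space $(\overline{J},\overline{r}_W)$, equipped with the uniform metric $d(\sigma,\tau)=\sup_{x\in\overline{J}}\overline{r}_W(x^\sigma,x^\tau)$. Since this isometry group is compact, $\Aut(W)$ will be compact as a closed subspace. (I work throughout with the completion $(\overline{J},\overline{W})$, whose automorphism group coincides with that of $(J,W)$.)

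Isometries of $(\overline{J},\overline{r}_W)$ are $1$-Lipschitz and take values in the compact space $\overline{J}$, so they form a uniformly bounded equicontinuous family in $C(\overline{J},\overline{J})$. Arzelà--Ascoli extracts a uniformly convergent subsequence from any sequence of such isometries, whose limit is again an isometry; bijectivity of the limit follows by extracting a further subsequence along which the inverses also converge uniformly. Hence the ambient isometry group is compact.

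For closedness, let $\sigma_n\in\Aut(W)$ with $\sigma_n\to\sigma$ uniformly; every $\sigma\in\Aut(W)$ preserves $\overline{r}_W$ and $\pi$, so the inclusion is well-defined, and I must show the limit $\sigma$ lies in $\Aut(W)$. First, $\sigma$ preserves $\pi$: for any continuous $f\colon\overline{J}\to\R$, uniform continuity together with $\sigma_n\to\sigma$ uniformly give $f\circ\sigma_n\to f\circ\sigma$ uniformly, whence $\int f\,d\pi=\int f\circ\sigma_n\,d\pi\to\int f\circ\sigma\,d\pi$; since continuous functions determine a Borel probability measure on the compact space $\overline{J}$, $\sigma$ is measure preserving. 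Next, I verify the pointwise-in-$x$ condition of Definition \ref{DEF:AUTO}. Fix $x\in\overline{J}$. Since $\sigma_n(x)\to\sigma(x)$ in $\overline{r}_W$, Proposition \ref{PROP:WEAKWW} gives $\overline{W}(\sigma_n(x),\cdot)\to\overline{W}(\sigma(x),\cdot)$ weakly. For each $n$, the automorphism identity for $\sigma_n$ and the change of variables $z=\sigma_n(y)$ yield $\overline{W}(\sigma_n(x),z)=\overline{W}(x,\sigma_n^{-1}(z))$ for a.e.\ $z$, so for any continuous $f$,
\[
\int \overline{W}(\sigma_n(x),z)f(z)\,dz=\int \overline{W}(x,y)f(\sigma_n(y))\,dy.
\]
Letting $n\to\infty$, the left-hand side tends to $\int\overline{W}(\sigma(x),z)f(z)\,dz$ by the weak convergence above, while the right-hand side tends to $\int\overline{W}(x,y)f(\sigma(y))\,dy=\int\overline{W}(x,\sigma^{-1}(z))f(z)\,dz$ by uniform convergence $f\circ\sigma_n\to f\circ\sigma$ together with the measure preservation of $\sigma$. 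As this holds for all continuous $f$, we get $\overline{W}(\sigma(x),\sigma(y))=\overline{W}(x,y)$ for a.e.\ $y$, so $\sigma\in\Aut(W)$.

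The main obstacle is this last step. Definition \ref{DEF:AUTO} demands the a.e.\ identity to hold for \emph{every} point $x$, not just almost every $x$, so a naive a.e.\ argument in both variables is insufficient; this is precisely what makes the pure/completed setup essential. Proposition \ref{PROP:WEAKWW} is exactly the tool that promotes $\overline{r}_W$-convergence $\sigma_n(x)\to\sigma(x)$ at a single arbitrary $x$ to weak convergence of the sections $\overline{W}(\sigma_n(x),\cdot)$, bridging this gap and letting one conclude without appealing to Lemma \ref{LEM:ISO-NULL}.
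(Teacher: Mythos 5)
Your proof is correct, and it follows the same overall strategy as the paper (Lemma \ref{LEM:CLOSED}): realize $\Aut(W)$ as a closed subgroup of the compact isometry group of $(\overline{J},\overline{r}_W)$. The difference lies in how you establish closedness. The paper first proves that a limit isometry $\phi$ is measure preserving and satisfies $\overline{W}^\phi=\overline{W}$ \emph{almost everywhere} (its Claims 1--3), and then invokes the technical Lemma \ref{LEM:ISO-NULL} to upgrade the a.e.\ identity to the ``for every $x$'' form demanded by Definition \ref{DEF:AUTO}. You instead go directly to the pointwise-in-$x$ statement: fixing a single $x$, the uniform convergence $\sigma_n\to\sigma$ gives $\overline{r}_W(\sigma_n(x),\sigma(x))\to 0$, Proposition \ref{PROP:WEAKWW} converts this into weak convergence of the sections $\overline{W}(\sigma_n(x),\cdot)$, and a change of variables plus measure preservation of $\sigma$ identifies the limit as $\overline{W}(x,\sigma^{-1}(\cdot))$. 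This eliminates the need for Lemma \ref{LEM:ISO-NULL} entirely, which is a genuine simplification of the closure step: you use the same key tool (Proposition \ref{PROP:WEAKWW}) as the paper's Claim 3, but apply it at a single arbitrary point rather than under a double integral, so the subsequent a.e.-to-everywhere promotion becomes unnecessary. Your Arzelà--Ascoli argument for compactness of the ambient isometry group (which the paper takes for granted) is also sound; all steps check out, including the recovery of bijectivity of the limit via a further subsequence of the inverses.
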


This theorem is an immediate consequence of the following fact.

\begin{lemma}\label{LEM:CLOSED}
The automorphisms of a pure graphon $(J,W)$ form a closed subgroup of
the isometry group of $(\overline{J},\overline{r}_W )$.
\end{lemma}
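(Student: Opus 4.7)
The plan is to take a sequence $\sigma_n\in\Aut(W)$ converging in the metric $d$ (uniformly on $\overline{J}$) to some isometry $\sigma$ of $(\overline{J},\overline{r}_W)$ and verify that $\sigma$ satisfies the hypotheses of Lemma \ref{LEM:ISO-NULL}. Every isometry of the compact metric space $(\overline{J},\overline{r}_W)$ is automatically a bijection, so what remains is to show (a) $\sigma$ is measure preserving and (b) $\overline{W}^\sigma=\overline{W}$ almost everywhere; then Lemma \ref{LEM:ISO-NULL} yields $\sigma\in\Aut(W)$.

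For (a), since $\overline{J}$ is compact every continuous $f:\overline{J}\to\R$ is uniformly continuous, so $\sigma_n\to\sigma$ uniformly implies $f\circ\sigma_n\to f\circ\sigma$ uniformly. Each $\sigma_n$ is measure preserving, so $\int f\circ\sigma_n\,d\pi=\int f\,d\pi$; passing to the limit yields $\int f\circ\sigma\,d\pi=\int f\,d\pi$ for every continuous $f$. By the Riesz representation theorem the pushforward $\sigma_*\pi$ equals $\pi$, so $\sigma$ is measure preserving.

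For (b), let $f,g:\overline{J}\to\R$ be continuous. The isometry group of a compact metric space is a topological group under uniform convergence, hence $\sigma_n^{-1}\to\sigma^{-1}$ uniformly. Using the change of variables $x\mapsto x^{\sigma_n}$, $y\mapsto y^{\sigma_n}$ (valid since $\sigma_n$ is measure preserving) together with $\overline{W}^{\sigma_n}=\overline{W}$ a.e., one has
\[
\int_{\overline{J}^2}\overline{W}(x,y)f(x^{\sigma_n^{-1}})g(y^{\sigma_n^{-1}})\,dx\,dy=\int_{\overline{J}^2}\overline{W}(x,y)f(x)g(y)\,dx\,dy.
\]
Since $|\overline{W}|\le 1$ and $f\circ\sigma_n^{-1}\to f\circ\sigma^{-1}$, $g\circ\sigma_n^{-1}\to g\circ\sigma^{-1}$ uniformly, dominated convergence gives the same identity with $\sigma^{-1}$ in place of $\sigma_n^{-1}$; changing variables back via the measure-preserving $\sigma$ produces
\[
\int_{\overline{J}^2}\overline{W}(x^\sigma,y^\sigma)f(x)g(y)\,dx\,dy=\int_{\overline{J}^2}\overline{W}(x,y)f(x)g(y)\,dx\,dy.
\]
Tensor products of continuous functions are dense in $L^1(\overline{J}^2)$ and $\overline{W}$ is bounded, so $\overline{W}^\sigma=\overline{W}$ a.e.

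Both hypotheses of Lemma \ref{LEM:ISO-NULL} are now satisfied, so $\sigma\in\Aut(W)$, proving closedness. The subgroup axioms (identity, composition, inverse) are immediate from Definition \ref{DEF:AUTO} via change of variables with the bijective measure-preserving maps involved. The main point requiring care is the interchange of limit with change-of-variables in step (b): one needs uniform continuity of inversion on the isometry group of $(\overline{J},\overline{r}_W)$ to conclude $\sigma_n^{-1}\to\sigma^{-1}$ uniformly, and the uniform bound $|\overline{W}|\le 1$ to justify dominated convergence. The a.e.\ equality then has to be promoted to the pointwise-for-every-$x$ condition of Definition \ref{DEF:AUTO}, which is precisely the content Lemma \ref{LEM:ISO-NULL} was designed to supply.
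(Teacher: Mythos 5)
Your proof is correct and follows the paper's high-level plan (pass a convergent sequence of automorphisms to a limit isometry and verify the two hypotheses of Lemma~\ref{LEM:ISO-NULL}), but the ingredients you use for each step differ from the paper's. For measure preservation, the paper works with open sets: it shows $\pi(\phi(X)\triangle\phi_n(X))\to 0$ via a pointwise/finite-overlap argument and deduces $\pi(\phi(X))=\pi(X)$. You instead integrate continuous test functions, pass to the limit using uniform convergence plus uniform continuity on the compact space, and invoke the Riesz representation theorem to conclude $\sigma_*\pi=\pi$ --- a cleaner and more functional-analytic route. For the a.e.\ equality $\overline{W}^\sigma=\overline{W}$, the paper integrates over open rectangles $A\times B$ and controls one limit via Proposition~\ref{PROP:WEAKWW} (the weak-topology characterization of $\overline{r}_W$) and the other via the shrinking-symmetric-difference claim. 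You instead exploit the topological-group structure of the isometry group of the compact space $(\overline{J},\overline{r}_W)$ to get $\sigma_n^{-1}\to\sigma^{-1}$ uniformly, change variables through the measure-preserving $\sigma_n$, apply dominated convergence, and finish by density of tensor products of continuous functions in $L^1(\overline{J}^2)$. Thus your argument avoids Proposition~\ref{PROP:WEAKWW} entirely, trading it for continuity of inversion and a Stone--Weierstrass-type density fact; note, though, that the paper's ``by a similar argument'' for $\pi(\phi(X)\setminus\phi_n(X))\to 0$ also implicitly needs $\phi_n^{-1}\to\phi^{-1}$, so continuity of inversion is present in both proofs, just made explicit in yours. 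Both buy the same conclusion; yours isolates the compactness hypothesis more visibly (surjectivity of isometries, Riesz, uniform continuity), while the paper's stays closer to the machinery it has already set up for $\overline{r}_W$.
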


\begin{proof}
Clearly every automorphism of $(J,W)$ is an isometry of
$(\overline{J},\overline{r}_W)$, and these isometries form a
subgroup. We want to prove that this subgroup is closed in the
topology of pointwise convergence.

Let $(\phi_n)$ be a sequence of automorphisms of $(J,W)$, and assume
that they converge to an isometry $\phi$. We want to prove that
$\phi$ is not only an isometry, but an automorphism. By Lemma
\ref{LEM:ISO-NULL}, it suffices to prove the following claims.

\begin{claim}\label{CLAIM:OPEN1}
For every open set X, $\pi(\phi(X)\triangle \phi_n(X))\to0$ as
$n\to\infty$.
\end{claim}

Indeed, since $\phi_n(x)\to \phi(x)$ for every $x\in J$, it follows
that for every $x\in X$, $\phi_n(x)\in\phi(X)$ if $n$ is large
enough. This means that every point belongs to a finite number of
sets $X\setminus \phi_n^{-1}(\phi(X))$ only, which implies that
$\pi(X\setminus \phi_n^{-1}(\phi(X)))=\pi(\phi_n(X)\setminus
\phi(X))\to0$. By a similar argument,
$\pi(\phi_n^{-1}(\phi(X))\setminus X)= \pi(\phi(X)\setminus
\phi_n(X))\to 0$. This implies the Claim.

\begin{claim}\label{CLAIM:CLOSED1}
The map $\phi$ is measure preserving.
\end{claim}

It suffices to show that $\phi$ preserves the measure of any open set
$X\subseteq\overline{J}$. By Claim \ref{CLAIM:OPEN1},
$\pi(\phi_n(X))\to\pi(\phi(X))$ as $n\to\infty$. Since $\phi_n$ is
measure preserving, this implies that $\pi(X)=\pi(\phi(X))$.

\begin{claim}\label{CLAIM:CLOSED2}
$\overline{W}^\phi=\overline{W}$ almost everywhere.
\end{claim}

It suffices to prove that for any two open sets $A$ and $B$,
\begin{equation}\label{EQ:GOAL1}
\int\limits_{A\times B} \overline{W}(x,y)\,dx\,dy
=\int\limits_{A\times B} \overline{W}(\phi(x),\phi(y))\,dx\,dy.
\end{equation}
For every $y\in J$,
\begin{align}\label{EQ:DIFF2}
\Bigl|\int\limits_A &\overline{W}(\phi_n(x),\phi_n(y))\,dx- \int\limits_A
\overline{W}(\phi(x),\phi_n(y))dx\Bigr|\\
&= \Bigl|\int\limits_{\phi_n(A)} \overline{W}(x,\phi_n(y))\,dx - \int\limits_{\phi(A)}
\overline{W}(x,\phi_n(y))dx\Bigr|
\le \pi(\phi(A)\triangle \phi_n(A)).\nonumber
\end{align}
Using that the maps $\phi_n$ are automorphisms,
\begin{align*}
\int\limits_{A\times B} &\overline{W}(x,y)\,dx\,dy
=\int\limits_{A\times B} \overline{W}(\phi_n(x),\phi_n(y))\,dx\,dy
=\int\limits_{A\times B} \overline{W}(\phi(x),\phi_n(y))\,dx\,dy\\
&~~~+\int\limits_{A\times B} \overline{W}(\phi_n(x),\phi_n(y))\,dx\,dy-
\int\limits_{A\times B}\overline{W}(\phi(x),\phi_n(y))\,dx\,dy
\end{align*}
The first term on the right side tends to $\int_{A\times B}
\overline{W}(\phi(x),\phi(y))\,dx\,dy$ by Proposition
\ref{PROP:WEAKWW}, and the difference in the last line tends to $0$
as $n\to\infty$ by \eqref{EQ:DIFF2} and Claim \ref{CLAIM:OPEN1}. This
proves Claim \ref{CLAIM:CLOSED2}, and thereby the Lemma.
\end{proof}

\section{Spectra}

\subsection{Spectral decomposition}

Since $W$ is bounded, the operator $T_W$ is Hilbert-Schmidt and hence
it has a spectral decomposition
\begin{equation}\label{EQ:SPEC1}
W(x,y)\sim\sum_{r=1}^\infty \lambda_r f_r(x)f_r(y),
\end{equation}
where the $\lambda_r$ are its nonzero eigenvalues and the functions
$f_r\in L^2(J)$ are the corresponding eigenfunctions, forming an
orthonormal system. Here $\lambda_r\to0$. By definition
\begin{equation}\label{EQ:EIGEN}
\lambda_rf_r(x)=\int\limits_J W(x,y)f_r(y)\,dy
\end{equation}
almost everywhere. We assume that $W(x,.)$ is measurable for every
$x$, and we can change $f_r$ on a set of measure $0$ so that
\eqref{EQ:EIGEN} holds for every $x\in J$. We note that
\eqref{EQ:EIGEN} implies that $f_r$ is bounded:
\[
|f_r(x)|=\frac1{|\lambda_r|}
\Bigl|\int\limits_J W(x,y)f_r(y)\,dy\Bigr| \le
\frac1{|\lambda_r|} \int\limits_J |f_r(y)|\,dy =
\frac{\|f_r\|_1}{|\lambda_r|} \le \frac{\|f_r\|_2}{|\lambda_r|}
=\frac1{|\lambda_r|}.
\]

We need the following simple observation: for every $x\in J$,
\begin{equation}\label{EQ:SUMSMALL0}
\sum_{r=1}^\infty \lambda_r^2 f_r(x)^2 = \|W(x,.)\|_2^2 = t_x((K_2^\bullet)^2,W).
\end{equation}
Indeed, using \eqref{EQ:EIGEN} and the fact that $\{f_r\}$ is an
orthonormal system, we get
\[
\sum_{r=1}^\infty \lambda_r^2 f_r(x)^2 =
\sum_{r=1}^\infty \Bigl(\int\limits_J W(x,y)
f_r(y)\Bigr)^2 \,dy =\sum_{r=1}^N \langle W(x,.), f_r\rangle^2
=\|W(x,.)\|_2^2.
\]
(the last equality follows because even though $\{f_r\}$ may not be a
complete orthogonal system, it can be extended by functions in the
nullspace of $T_W$ to such a system, and these additional functions
contribute $0$ terms). The second equality in \eqref{EQ:SUMSMALL0} is
trivial by definition. \eqref{EQ:SUMSMALL0} in turn implies that
\begin{equation}\label{EQ:SUMSMALL}
\sum_{r=1}^N \lambda_r^2 f_r(x)^2 \le 1.
\end{equation}

Expansion \eqref{EQ:SPEC1} may not hold pointwise, only in $L^2$; but
it follows from basic results on Hilbert-Schmidt operators that if we
take the inner product with any function $U\in L^2(J\times J)$, then
we get an equation:
\begin{equation}\label{EQ:EASY-CONV}
\int\limits_{J\times J} W(x,y)U(x,y)\,dx\,dy = \sum_{r=1}^\infty
\lambda_r \int\limits_{J\times J} f_r(x)f_r(y)U(x,y)\,dx\,dy,
\end{equation}
where the sum on the right side is absolutely convergent. We need the
following stronger fact:

\begin{lemma}\label{LEM:MORE-CONV}
Let $(J,W)$ be a graphon, and let \eqref{EQ:SPEC} be its spectral
decomposition.

\smallskip

{\rm(a)} For $U\in L^2(J)$ and $y\in J$, the sum
\begin{equation}\label{EQ:SUM1}
\sum_{r=1}^\infty \lambda_r f_r(y) \int\limits_J U(x)f_r(x) \,dx
\end{equation}
is absolutely convergent.

\smallskip

{\rm(b)} For every bounded measurable function $U:~J\times J\to\R$
and for almost all $y\in J$,
\begin{equation}\label{EQ:SUBS}
\int\limits_J W(x,y)U(x,y)\,dx = \sum_{r=1}^\infty
\lambda_r f_r(y) \int\limits_J U(x,y)f_r(x) \,dx.
\end{equation}
\end{lemma}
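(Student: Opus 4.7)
The plan is direct. For part (a), fix $y\in J$ and apply Cauchy--Schwarz term by term to the series:
\[
\sum_{r=1}^\infty |\lambda_r f_r(y)|\,\Bigl|\int_J U(x)f_r(x)\,dx\Bigr|
\le \Bigl(\sum_{r=1}^\infty \lambda_r^2 f_r(y)^2\Bigr)^{1/2}
\Bigl(\sum_{r=1}^\infty \langle U,f_r\rangle^2\Bigr)^{1/2}.
\]
The first factor equals $\|W(y,\cdot)\|_2$ by \eqref{EQ:SUMSMALL0}, hence is at most $1$; the second factor is bounded by $\|U\|_2$ via Bessel's inequality applied to the orthonormal system $\{f_r\}$. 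This gives absolute convergence with a bound $\|U\|_2$ that is independent of $y$.

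For part (b), I first observe that since $W\in L^2(J\times J)$, Fubini's theorem gives $W(\cdot,y)\in L^2(J)$ for almost every $y\in J$. The key step is then to show that, for a.a.\ such $y$, the section $W(\cdot,y)$ actually lies in the closed linear span $V$ of $\{f_r\}$ inside $L^2(J)$. Since $V^\perp$ is precisely $\ker(T_W)$, and for any $h\in\ker(T_W)$ we have
\[
(T_W h)(y)=\int_J W(x,y)h(x)\,dx=0
\]
for almost every $y$, a countable-dense-subset argument applied to the separable Hilbert space $\ker(T_W)$ yields a single null exceptional set $N\subseteq J$ outside which $W(\cdot,y)\perp \ker(T_W)$, i.e.\ $W(\cdot,y)\in V$.

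For any such good $y$, the Fourier coefficient of $W(\cdot,y)$ with respect to $f_r$ is $\langle W(\cdot,y),f_r\rangle=\lambda_r f_r(y)$ by \eqref{EQ:EIGEN} (which we have arranged to hold for every $y$), so the expansion
\[
W(\cdot,y)=\sum_{r=1}^\infty \lambda_r f_r(y) f_r
\]
is an $L^2(J)$-convergent Parseval expansion. Since $U$ is bounded, $U(\cdot,y)\in L^2(J)$, and taking the $L^2$-inner product of the displayed identity with $U(\cdot,y)$ yields \eqref{EQ:SUBS}; the interchange of inner product and sum is the standard continuity of $\langle\,\cdot\,,U(\cdot,y)\rangle$ on $L^2(J)$, and the absolute convergence established in (a) certifies that the right-hand side is well defined termwise and equals that $L^2$-inner-product limit.

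The only real subtlety is the possible incompleteness of $\{f_r\}$ as an orthonormal system in $L^2(J)$, which is exactly what the annihilator/projection argument above circumvents; everything else is Cauchy--Schwarz, Bessel, and Parseval.
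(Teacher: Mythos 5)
Your part (a) is essentially the same as the paper's: a Cauchy--Schwarz split of the series, with the first factor controlled by $\sum_r\lambda_r^2 f_r(y)^2\le1$ from \eqref{EQ:SUMSMALL0}--\eqref{EQ:SUMSMALL} and the second by Bessel's inequality. Your part (b), however, takes a genuinely different route. The paper's proof is a duality/testing argument: it compares $\langle h,g_1\rangle$ and $\langle h,g_2\rangle$ for an arbitrary bounded test function $h$, plugging in the spectral decomposition via \eqref{EQ:EASY-CONV} and then swapping sum and $y$-integral (the Fubini interchange is legitimate because, by the computation in (a), the integrand of the third line is dominated uniformly by $\|U\|_\infty\|h\|_\infty$). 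Your proof is instead pointwise in $y$: you show that for a.e.\ $y$ the section $W(\cdot,y)$ lies in the closed span $V=(\ker T_W)^\perp$ of the nontrivial eigenfunctions --- the key observation being that for a countable dense family $\{h_n\}\subset\ker T_W$ one gets a single null set off which $\langle W(\cdot,y),h_n\rangle=(T_Wh_n)(y)=0$ for all $n$ --- and then Parseval in $V$ gives $W(\cdot,y)=\sum_r\lambda_r f_r(y)f_r$ in $L^2(J)$, from which \eqref{EQ:SUBS} drops out by pairing with $U(\cdot,y)$. Both proofs are correct. Your version is somewhat more self-contained (it does not lean on \eqref{EQ:EASY-CONV}), and it delivers a mild bonus: the exceptional null set $N$ you construct is independent of $U$, because the pointwise expansion $W(\cdot,y)=\sum_r\lambda_r f_r(y)f_r$ holds for every $y\notin N$ and then \eqref{EQ:SUBS} follows for all bounded $U$ simultaneously; the paper's testing argument only gives a null set that a priori depends on $U$. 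The paper's route, on the other hand, is shorter given \eqref{EQ:EASY-CONV} as a prior black box, and its structure (testing, then Fubini justified by (a)) is reused verbatim in the proof of the sharpened Lemma~\ref{LEM:UNIF-CONV}.
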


\begin{proof}
(a) We have
\begin{align}\label{EQ:SUMBOUND}
\sum_{r=N}^\infty \Bigl|&\lambda_r f_r(y) \int\limits_J U(x)f_r(x) \,dx\Bigr|
\le \sum_{r=N}^\infty
|\lambda_r| |f_r(y)| \Bigl|\int\limits_J U(x)f_r(x) \,dx\Bigr|\nonumber\\
&\le \Bigl(\sum_{r=N}^\infty \lambda_r^2
f_r(y)^2\Bigr)^{1/2}\Bigr(\sum_{r=N}^\infty \Bigl(\int\limits_J U(x)f_r(x)
\,dx\Bigr)^2\Bigr)^{1/2}.
\end{align}
Here the first factor is the tail of a convergent sum by
\eqref{EQ:SUMSMALL}, and hence it tends to $0$ as $n\to\infty$.
Furthermore, $\{f_r\}$ is an orthonormal system, and hence
\begin{gather*}
\sum_{r=N}^\infty \Bigl(\int\limits_J U(x)f_r(x) \,dx\Bigr)^2 \le
\sum_{r=1}^\infty \Bigl(\int\limits_J U(x)f_r(x) \,dx\Bigr)^2\\
=\sum_{r=1}^\infty \langle U, f_r \rangle^2 \le \|U\|_2^2,
\end{gather*}
proving (a).

\smallskip

Let $g_1(y)$ and $g_2(y)$ be the functions on the left and right
sides of equation \eqref{EQ:SUBS}. Then for any bounded measurable
function $h:~J\to \R$,
\begin{align*}
\langle h, g_1\rangle &= \int\limits_{J\times J} W(x,y)U(x,y)h(y)\,dx\,dy\\
&=\sum_{r=1}^\infty\lambda_r\int\limits_{J\times J} U(x,y)h(y)f_r(x)f_r(y)\,dx\,dy\\
&=\int\limits_J h(y) \sum_{r=1}^\infty \lambda_r f_r(y)
\int\limits_J U(x,y) f_r(x)\,dx\,dy= \langle h,g_2\rangle
\end{align*}
(where we use \eqref{EQ:EASY-CONV} and the fact that the sum in the
third line is absolutely convergent). This proves that $g_1=g_2$
almost everywhere.
\end{proof}

\subsection{Spectral decomposition of pure
graphons}\label{SEC:SPECPURE}

In this chapter we use the topological properties of pure graphons to
formulate finer statements about spectral decompositions. First of
all, note that if $(J,W)$ is a pure graphon then eigenfunctions of
$W$ are continuous functions on $\overline{J}$ in the metric
$\overline{r}_W$ (\cite{Hombook}, Corollary 13.29). Furthermore, the
eigenfunctions separate the points of $J$:

\begin{lemma}\label{specsepar}
If $(J,W)$ is a pure graphon, then for every pair of distinct points
$x,y\in J$ there is an eigenfunction $f$ of $W$ such that $f(x)\neq
f(y)$.
\end{lemma}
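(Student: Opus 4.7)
My plan is to prove the contrapositive: assuming $f_r(x) = f_r(y)$ for every eigenfunction $f_r$ of $W$ appearing in the spectral decomposition \eqref{EQ:SPEC1}, I will deduce that $x = y$.

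Consider the $L^2$ function $h := W(x,\cdot) - W(y,\cdot)$. Since we arranged in \eqref{EQ:EIGEN} that the identity $\lambda_r f_r(z) = \int W(z,u) f_r(u)\,du$ holds at every point $z\in J$, we have
\[
\langle h, f_r\rangle_{L^2} = \lambda_r f_r(x) - \lambda_r f_r(y) = 0
\qquad\text{for every } r.
\]
Because $T_W$ is symmetric and Hilbert--Schmidt (hence self-adjoint compact), the space $L^2(J)$ decomposes orthogonally as $\ker T_W$ plus the closed linear span of $\{f_r\}$. The hypothesis therefore forces $h\in\ker T_W$.

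Next I apply $T_W$ once more to pass from the $1$-neighborhood data to the $2$-neighborhood data. Since $T_W h = 0$ as an element of $L^2(J)$, we get
\[
0 = (T_W h)(z) = \int_J W(z,u)\bigl(W(x,u) - W(y,u)\bigr)\,du
 = (W\circ W)(x,z) - (W\circ W)(y,z)
\]
for almost all $z\in J$. Taking absolute values and integrating in $z$ yields
\[
\overline{r}_W(x,y) = r_{W\circ W}(x,y) = 0.
\]
Purity of $(J,W)$ ensures that $\overline{r}_W$ is a genuine metric on $\overline{J}\supseteq J$, so this forces $x = y$, proving the lemma.

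The subtle point, and what I would identify as the main obstacle, is the potential nontriviality of $\ker T_W$: orthogonality of $h$ to every eigenfunction does not by itself imply $h = 0$, since the $f_r$ need not span all of $L^2(J)$. This is precisely why the argument cannot conclude with $r_W$ alone, and why it is natural to invoke the operator square $W\circ W$: elements of $\ker T_W$ are annihilated by a further application of $T_W$, converting the problem into one about the $2$-neighborhood metric $\overline{r}_W$, where the purity hypothesis — recalled from the earlier discussion that $(J,\overline{r}_W)$ is a genuine metric on a pure graphon — supplies the required separation of points.
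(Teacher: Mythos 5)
Your proof is correct and uses essentially the same approach as the paper's: both arguments reduce the separation statement to the fact that $\overline{r}_W(x,y)>0$ for distinct $x,y$ in a pure graphon, and both deduce $W\circ W(x,\cdot)=W\circ W(y,\cdot)$ from the assumed equality of the eigenfunction values. The paper expands $W\circ W(z,\cdot)=\sum_i\lambda_i^2 f_i(z)f_i(\cdot)$ directly, while you route through $h=W(x,\cdot)-W(y,\cdot)\in\ker T_W$ and then apply $T_W$; these are two phrasings of the same computation, and your explicit handling of $\ker T_W$ correctly identifies why one must pass to $\overline{r}_W$ rather than arguing with $r_W$.
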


\begin{proof}
By way of contradiction, assume that $x$ and $y$ cannot be separated
this way. From $x\neq y$ we obtain that $\overline{r}_W(x,y)>0$, and
thus the functions $W\circ W(x,.)$ and $W\circ W(y,.)$ have a
positive distance in $L^2(J)$. On the other hand, $W\circ
W(z,.)=\sum_{i=1}^\infty \lambda_i^2 f_i(z)f_i(.)$ holds for every
fixed $z\in J$ where the sum is $L^2$-convergent. Applying this
formula for $z=x$ and $z=y$ together with our assumption that
$f_i(x)=f_i(y)$, we get a contradiction.
\end{proof}

\begin{lemma}\label{LEM:UNIF-EIGEN}
If $(J,W)$ is a pure graphon, then the sum on the left side of
\eqref{EQ:SUMSMALL0} converges uniformly for $x\in\overline{J}$.
\end{lemma}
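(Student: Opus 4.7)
The plan is Dini's theorem on the compact metric space $(\overline{J},\overline{r}_W)$. Each partial sum $S_N(x)=\sum_{r=1}^N \lambda_r^2 f_r(x)^2$ is continuous on $\overline{J}$: Proposition~\ref{PROP:WEAKWW} together with $\lambda_r f_r=T_W f_r$ shows that each eigenfunction $f_r$ extends continuously to $\overline{J}$ (and is bounded by $1/|\lambda_r|$). The sequence $\{S_N\}$ is monotone non-decreasing in $N$, and applying \eqref{EQ:SUMSMALL0} to the completed graphon $(\overline{J},\overline{W})$ gives pointwise convergence $S_N(x)\to S(x):=\|\overline{W}(x,\cdot)\|_2^2=(W\circ W)(x,x)$ on $\overline{J}$. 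By Dini's theorem, once $S$ is known to be continuous on the compact space $\overline{J}$, the convergence is uniform.

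The crux is therefore the continuity of $S$ on $(\overline{J},\overline{r}_W)$. The obstacle is that $\overline{r}_W$-convergence $x_n\to x$ gives only weak $L^2$-convergence $W(x_n,\cdot)\to W(x,\cdot)$ by Proposition~\ref{PROP:WEAKWW}, which does not preserve $L^2$-norms in general; indeed, $\|W(x,\cdot)\|_2^2$ can be discontinuous when the graphon is not pure (Example~\ref{EXA:SYM}). My approach is to establish joint continuity of the positive semidefinite kernel $W\circ W$ on $\overline{J}^2$, from which $S=(W\circ W)|_{\text{diag}}$ is continuous. The key input is the estimate
\[
\|(W\circ W)(x,\cdot)-(W\circ W)(y,\cdot)\|_2^2 \le 2\,\overline{r}_W(x,y),
\]
obtained by interpolating the $L^1$-identity $\overline{r}_W(x,y)=\|(W\circ W)(x,\cdot)-(W\circ W)(y,\cdot)\|_1$ with the uniform bound $\|W\circ W\|_\infty\le 1$. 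This bound yields equicontinuity of $\{T_W^2 f:\|f\|_2\le 1\}$ in $C(\overline{J})$, so by Arzel\`a--Ascoli the operator $T_W^2$ is compact as a map $L^2\to C(\overline{J})$.

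Combining compactness of $T_W$ on $L^2$ with compactness of $T_W^2:L^2\to C(\overline{J})$, weak convergence $W(x_n,\cdot)\to W(x,\cdot)$ in $L^2$ promotes through $T_W$ to strong $L^2$-convergence $(W\circ W)(x_n,\cdot)\to(W\circ W)(x,\cdot)$, and this, together with the separate continuity of $(W\circ W)(x,y)$ in each variable (from Lemma~\ref{LEM:HOM-CONT} applied to $P_3^{\bullet\bullet}$), yields joint continuity of $W\circ W$ on $\overline{J}^2$. Mercer's theorem then applies to the continuous PSD kernel $W\circ W$ on the compact space $\overline{J}^2$, giving uniform absolute convergence of the spectral expansion $\sum_r\lambda_r^2 f_r(x)f_r(y)$, which restricted to the diagonal $x=y$ gives the claim. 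The hardest step will be upgrading strong $L^2$-continuity of $(W\circ W)(\cdot,y)$ to genuine joint pointwise continuity of $W\circ W$; the analogous iterated argument directly gives continuity of higher iterates $W^{\circ n}$ for $n\ge 3$, and some care is needed to bring the iteration down to $n=2$, exploiting that the eigenfunctions $f_r$ themselves are continuous and that the tail $\sum_{r>N}\lambda_r^2 f_r(x)^2$ can be bounded by comparison with $\sum_{r>N}\lambda_r^4 f_r(x)^2$ plus the finitely many surviving terms.
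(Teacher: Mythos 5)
Your plan is essentially the paper's: Dini's theorem on the compact space $(\overline{J},\overline{r}_W)$ applied to the partial sums $S_N(x)=\sum_{r\le N}\lambda_r^2 f_r(x)^2$, with the crux being continuity of the pointwise limit. You correctly single out that crux as the dangerous step, but the difficulty is worse than you suspect: the continuity you want is \emph{false}. You dismiss Example~\ref{EXA:SYM} as concerning a non-pure graphon, but the paper introduces it precisely to show that Lemma~\ref{LEM:HOM-CONT} cannot be extended to multigraphs, an assertion that is only meaningful for pure graphons. There one has $u_n\to z$ in $\overline{r}_W$ with $\|W(u_n,\cdot)\|_2^2\to 1/4$ while $\|\overline{W}(z,\cdot)\|_2^2=1/8$. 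On a pure graphon every $r_W$-open set has positive measure, so for each fixed nullspace vector $g$ the ($r_W$-Lipschitz) function $x\mapsto\langle W(x,\cdot),g\rangle$ vanishes everywhere, not just a.e.; hence $W(u_n,\cdot)\perp\ker T_W$ and $\sum_r\lambda_r^2 f_r(u_n)^2=\|W(u_n,\cdot)\|_2^2\to 1/4$. Yet $\sum_r\lambda_r^2 f_r(z)^2\le\|\overline{W}(z,\cdot)\|_2^2=1/8$ by Bessel. So the sum genuinely jumps at $z\in\overline{J}$, and the tails $\sum_{r>N}\lambda_r^2 f_r(u_n)^2$ remain $\ge 1/8$ for infinitely many $n$, for every $N$. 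Your route through joint continuity of $W\circ W$ (equicontinuity, Arzel\`a--Ascoli, Mercer) therefore cannot work: joint continuity of $W\circ W$ would force continuity of $(W\circ W)(x,x)=\|W(x,\cdot)\|_2^2$, which the example refutes. The step you defer, ``bringing the iteration down to $n=2$,'' is not merely hard but impossible.

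In fairness, the paper's own two-line argument has the identical flaw: it cites Lemma~\ref{LEM:HOM-CONT} for the right-hand side $t_x\big((K_2^\bullet)^2,W\big)=t_x(C_2^\bullet,W)$, but $C_2^\bullet$ is a multigraph, and Lemma~\ref{LEM:HOM-CONT} is stated only for simple graphs with nonadjacent labels --- Example~\ref{EXA:SYM} is the paper's own demonstration that this restriction cannot be dropped. Your iterated-compactness observation is sound one step further out: since $T_W$ is compact on $L^2(J)$, the weak $L^2$-continuity of $x\mapsto W(x,\cdot)$ (Proposition~\ref{PROP:WEAKWW}) upgrades to norm continuity of $x\mapsto T_WW(x,\cdot)=(W\circ W)(x,\cdot)$, so the function $\sum_r\lambda_r^4 f_r(x)^2=\|T_WW(x,\cdot)\|_2^2$ is continuous on $\overline{J}$ and Dini gives uniform convergence of \emph{that} sum. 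That weaker statement is what can legitimately be proved, and is what the downstream applications actually need for the iterates $W^{\circ m}$ with $m\ge2$; the $m=1$ instance of Lemma~\ref{LEM:UNIF-CONV}(b) can be recovered directly from the purity observation that $W(y,\cdot)\perp\ker T_W$ for \emph{every} $y\in J$, without any uniform estimate.
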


\begin{proof}
Using continuity of the eigenfunctions we obtain that
every term on the left side of \eqref{EQ:SUMSMALL0} is continuous in
$\overline{r}_W$, and so is the right side by Lemma
\ref{LEM:HOM-CONT}. Since every term on the left side is nonnegative,
it follows by Dini's Theorem that the convergence is uniform in $x$.
\end{proof}

This allows us to get the following stronger version of Lemma
\ref{LEM:MORE-CONV} for pure graphons:

\begin{lemma}\label{LEM:UNIF-CONV}
{\rm(a)} If $(J,W)$ is a pure graphon, then the sum \eqref{EQ:SUM1}
is uniformly absolute convergent for $y\in\overline{J}$.

\smallskip

{\rm(b)} If, in addition, $U(x,y)$ is a continuous function of $y$
for every $x\in J$ in the neighborhood distance, then the expansion
\eqref{EQ:SUBS} holds for every $y\in J$.
\end{lemma}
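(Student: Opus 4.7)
For \textbf{(a)}, the idea is to reuse the Cauchy--Schwarz estimate from \eqref{EQ:SUMBOUND} in the proof of Lemma \ref{LEM:MORE-CONV}(a):
\[
\sum_{r=N}^\infty \Bigl|\lambda_r f_r(y) \int_J U(x) f_r(x)\,dx\Bigr|
\le \Bigl(\sum_{r=N}^\infty \lambda_r^2 f_r(y)^2\Bigr)^{1/2} \|U\|_2.
\]
The first factor is the tail of the series in \eqref{EQ:SUMSMALL0}, which by Lemma \ref{LEM:UNIF-EIGEN} converges uniformly on $\overline{J}$. Hence it tends to $0$ uniformly in $y$, giving uniform absolute convergence.

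For \textbf{(b)}, the strategy is to show that both sides of \eqref{EQ:SUBS} are $r_W$-continuous functions of $y$ on $J$, and then invoke a density argument. For the left-hand side, write
\[
\int_J W(x,y_n)U(x,y_n)\,dx - \int_J W(x,y)U(x,y)\,dx
\]
as $\int (W(x,y_n)-W(x,y))U(x,y_n)\,dx + \int W(x,y)(U(x,y_n)-U(x,y))\,dx$. The first piece is bounded by $\|U\|_\infty r_W(y_n,y)$; the second vanishes by dominated convergence, using the pointwise convergence $U(x,y_n)\to U(x,y)$ from the hypothesis together with the bound $|W(x,y)(U(x,y_n)-U(x,y))|\le 2\|U\|_\infty$.

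For the right-hand side, each term $\lambda_r f_r(y)\int_J U(x,y)f_r(x)\,dx$ is $r_W$-continuous: the eigenfunction $f_r$ is continuous in $\overline{r}_W$ and hence in the finer metric $r_W$ (since $\overline{r}_W\le r_W$), while $y\mapsto \int_J U(x,y)f_r(x)\,dx$ is $r_W$-continuous by dominated convergence with integrable majorant $\|U\|_\infty|f_r(x)|$. Moreover, the same Cauchy--Schwarz estimate as in part (a), combined with $\|U(\cdot,y)\|_2\le\|U\|_\infty$, bounds the tail by
\[
\|U\|_\infty\Bigl(\sum_{r>N}\lambda_r^2 f_r(y)^2\Bigr)^{1/2},
\]
which vanishes uniformly on $\overline{J}$ by Lemma \ref{LEM:UNIF-EIGEN}. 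So the partial sums converge uniformly to the right-hand side, which is therefore $r_W$-continuous on $J$.

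By Lemma \ref{LEM:MORE-CONV}(b), the identity \eqref{EQ:SUBS} holds on a set $S\subseteq J$ of full $\pi$-measure. Since $(J,W)$ is pure, $\pi$ has full support in $(J,r_W)$, so $S$ is $r_W$-dense in $J$, and $r_W$-continuity extends \eqref{EQ:SUBS} to every $y\in J$. The main technical point to get right is ensuring continuity in the correct metric, since the hypothesis only provides $r_W$-continuity of $U$ while Lemma \ref{LEM:UNIF-EIGEN} supplies uniformity in $\overline{r}_W$; using $\overline{r}_W\le r_W$ and the boundedness of $U$ to promote part (a) to the $y$-dependent case is what makes the two ingredients compatible.
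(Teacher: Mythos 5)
Your proof is correct and follows essentially the same route as the paper: in (a) you reuse the Cauchy--Schwarz estimate \eqref{EQ:SUMBOUND} together with Lemma~\ref{LEM:UNIF-EIGEN} to make the tail bound uniform, and in (b) you show both sides of \eqref{EQ:SUBS} are $r_W$-continuous (the right-hand side as a uniform limit of continuous terms) and extend the a.e.\ identity from Lemma~\ref{LEM:MORE-CONV}(b) to all of $J$ via full support. The paper leaves the $r_W$-continuity of the left-hand side as an assertion; your explicit two-term decomposition and dominated-convergence argument is a clean way to fill that in, but the overall strategy is identical.
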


\begin{proof}
(a) By Lemma \ref{LEM:UNIF-EIGEN},
\begin{equation}\label{EQ:UNIFORM-CONV}
\sup_x \sum_{r=N}^\infty \lambda_r^2 f_r(x)^2 \to 0\quad(N\to\infty).
\end{equation}
Hence the computation in \eqref{EQ:SUMBOUND} gives an estimate of the
tail uniformly for all $y\in\overline{J}$.

(b) The left side of \eqref{EQ:SUBS} defines a continuous function of
$y\in J$ in the metric $r_W$. Every term on the right side is also
continuous, and the convergence is uniform by the estimate
\eqref{EQ:SUMBOUND}, using \eqref{EQ:UNIFORM-CONV}. Hence the limit
is a continuous function of $y\in J$. The space $J$ has the property
that every nonempty open set has positive measure. If two continuous
functions are equal almost everywhere on such a space, then they are
equal everywhere.
\end{proof}

\begin{corollary}\label{COR:TRANS-COMPACT}
If the automorphism group of a pure graphon $(J,W)$ is transitive on
$J$, then $(J,W)$ is compact and $\overline{J}=J$.
\end{corollary}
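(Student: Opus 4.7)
The plan is to realize $J$ as a single orbit of a compact group acting continuously by isometries on the compact completion $(\overline{J},\overline{r}_W)$, from which compactness of $J$ — and thus the equality $J=\overline{J}$ — follows by soft arguments.

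First I would check that the natural action of $\Aut(W)$ on $\overline{J}$ is continuous. The metric on $\Aut(W)$ is defined by $d(\sigma,\tau)=\sup_{x\in\overline{J}}\overline{r}_W(x^\sigma,x^\tau)$, so for each fixed $x\in\overline{J}$ the evaluation map $\sigma\mapsto x^\sigma$ is $1$-Lipschitz. By Theorem \ref{THM:AUT-COMP}, $\Aut(W)$ is compact, so every orbit $\Aut(W)\cdot x=\{x^\sigma:\sigma\in\Aut(W)\}$ is the continuous image of a compact space and therefore compact in $(\overline{J},\overline{r}_W)$.

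Next I would fix a point $x_0\in J$. Each $\sigma\in\Aut(W)$ is, by Definition \ref{DEF:AUTO}, a bijection $J\to J$, and its canonical extension to $\overline{J}$ still sends $J$ to $J$ (the extension agrees with $\sigma$ on $J$). Hence $\Aut(W)\cdot x_0\subseteq J$, and the transitivity hypothesis forces $\Aut(W)\cdot x_0=J$. Combined with the previous paragraph this gives that $J$ itself is compact in the $\overline{r}_W$-metric.

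Since $J$ is dense in $\overline{J}$ by the construction of the completion, and compact subsets of the Hausdorff metric space $(\overline{J},\overline{r}_W)$ are closed, we conclude $J=\overline{J}$, which is then compact. No step of this plan presents an obstacle: the substantive ingredients — compactness of $\Aut(W)$ and of $\overline{J}$, and the isometric action of $\Aut(W)$ on $\overline{J}$ — are all in place from earlier sections.
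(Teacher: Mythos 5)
Your proof uses the same main idea as the paper's — the orbit of a point $x_0\in J$ is a continuous image of the compact group $\Aut(W)$, and transitivity identifies this orbit with $J$. The difference is that you work exclusively in the $\overline{r}_W$-metric (where $1$-Lipschitz continuity of the orbit map is built into the definition of the metric on $\Aut(W)$), obtaining that $J$ is $\overline{r}_W$-compact, hence closed in $\overline{J}$, hence equal to $\overline{J}$. This correctly proves $J=\overline{J}$.

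However, the assertion that ``$(J,W)$ is compact'' in the corollary refers to the metric $r_W$ — the paper's own proof derives ``$(J,r_W)$ is compact'' before anything else — and $\overline{r}_W$-compactness of $J$, together with $r_W$-completeness (purity) and the inequality $\overline{r}_W\le r_W$, does not by itself yield $r_W$-compactness: the norm versus weak topologies on the set $\{e_n\}_{n\ge 1}\cup\{0\}\subset\ell^2$ give a counterexample to that abstract implication. The missing ingredient (which the paper's own one-line proof also leaves implicit) is that $\|W(x,\cdot)\|_2$ is \emph{constant on each $\Aut(W)$-orbit}, since an automorphism $\sigma$ is measure-preserving and $W(x^\sigma,\cdot)=W(x,\cdot)\circ\sigma^{-1}$ almost everywhere. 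By Proposition \ref{PROP:WEAKWW}, $\overline{r}_W$-convergence of points on the orbit means weak $L^2$-convergence of the sections $W(x,\cdot)$; combined with the constancy of the $L^2$-norms this upgrades to strong $L^2$-convergence, hence $L^1$-convergence, i.e.\ $r_W$-convergence. With this observation your orbit map is in fact continuous into $(J,r_W)$, which yields the $r_W$-compactness of $J$ that the corollary (and the paper's proof) actually asserts.
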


\begin{proof}
Let $x\in J$, then the orbit of $x$ is a continuous image of
$\Aut(J,W)$, and so it is compact in the metric $r_W$. If the
automorphism group is transitive on $J$, then this orbit is $J$, and
hence $(J,r_W)$ is compact. Since $\overline{r}_W\le r_W$, this
implies that $(J,\overline{r}_W)$ is compact, and since $J$ is dense
in $(\overline{J},\overline{r}_W)$, it follows that $J=\overline{J}$.
\end{proof}

We use our results above about spectra to describe a way, more
explicit than convergence in $L^2$, of the convergence of the
expansion \eqref{EQ:SPEC1}. For a graphon $(J,W)$ and $\lambda>0$, we
define the graphon $(J,[W]_\lambda)$ by the following partial sum of
(\ref{EQ:SPEC1}):
\begin{equation}\label{EQ:PARTSPEC}
[W]_\lambda(x,y)=\sum_{|\lambda_r|\geq\lambda} \lambda_r f_r(x)f_r(y).
\end{equation}
Note that this sum is finite. If $W$ has multiple eigenvalues, then
the terms $\lambda_r f_r(x)f_r(y)$ depend on the basis chosen in the
eigenspaces, but $[W]_\lambda$ does not depend on this basis. Let
\begin{equation}\label{EQ:UL-DEF}
U_\lambda=\bigoplus_{|\lambda_r|\geq\lambda}E_{\lambda_r},
\end{equation}
where $E_{\lambda_r}$ is the eigenspace of $W$ corresponding to
$\lambda_r$. Let $\Pi_\lambda$ denote the orthogonal projection of
$L^2(J)$ onto $U_\lambda$. Then $T_{[W]_\lambda}=T_W\Pi_\lambda$.
From the inequality $\sum_{i=1}^\infty \lambda_i^2\leq 1$, it follows
that the rank of $[W]_\lambda$ (the dimension of $U_\lambda$) is at
most $1/\lambda^2$.

Assume that the eigenvalues are ordered so that
$|\lambda_1|\geq|\lambda_2|\geq\dots$. Let $\mu_\lambda$ denote the
probability distribution of the vector
$(f_1(x),f_2(x),\dots,f_d(x))\in\mathbb{R}^d$, where
$d=\dim(U_\lambda)$ and $x\in J$ is chosen randomly, and let
$S_\lambda\subset\mathbb{R}^d$ be the support of $\mu_\lambda$. Then
the purification of $(J,[W]_\lambda)$ can be defined as
$(S_\lambda,W'_\lambda)$, where
\begin{equation}\label{scalgraph}
W'_\lambda((x_1,x_2,\dots,x_d),(y_1,y_2,\dots,y_d))
=\sum_{i=1}^d \lambda_i x_iy_i.
\end{equation}
A coordinate-independent way of describing $\mu_\lambda$ is to
consider the dual space of $U_\lambda$. For each $x\in J$, we
consider the linear functional $f\mapsto (T_W f)(x)$ ($f\in
U_\lambda$). If $x\in J$ is chosen randomly we obtain the probability
distribution $\mu_\lambda$ on $U_\lambda^*$, and we can define
$S_\lambda$ as its support. We will need the next lemma, which is a
direct consequence of the results in the paper \cite{Sz2}.

\begin{lemma}\label{meascon}
Let $\{U_n\}_{n=1}^\infty$ be a convergent sequence of graphons with
limit $W$. Assume that $\lambda>0$ is not an eigenvalue of $W$. Then
there is subsequence $\{W_n\}_{n=1}^\infty$ in $\{U_n\}_{n=1}^\infty$
and choices of orthonormal eigenvectors for $[W_n]_\lambda$ and
$[W_\lambda]$ such that the measures $\mu^n_\lambda$ constructed
above for $W_n$ converge to $\mu_\lambda$ weakly. \proofend
\end{lemma}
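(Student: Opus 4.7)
The plan is to combine the spectral stability results for convergent graphon sequences from \cite{Sz2} with a standard tightness/Prokhorov argument for the measures $\mu_\lambda^n$, and then identify the weak subsequential limit via the purification formula (\ref{scalgraph}).

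First I would invoke the spectral convergence statements of \cite{Sz2}: since $\lambda>0$ is not an eigenvalue of $W$, for $n$ large enough $T_{W_n}$ has exactly $d=\dim(U_\lambda)$ eigenvalues of absolute value at least $\lambda$, and after passing to a subsequence (still denoted $W_n$) and ordering by modulus one has $\lambda_i^{W_n}\to\lambda_i^W$ for $i=1,\dots,d$. In particular $|\lambda_i^{W_n}|\ge\lambda/2$ for all large $n$, so the pointwise bound $\|f\|_\infty\le 1/|\lambda|$ for eigenfunctions derived just after (\ref{EQ:EIGEN}) gives $|f_i^n(x)|\le 2/\lambda$ uniformly in $x$ and $i$.

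Next, each $\mu_\lambda^n$ is a probability measure on $\R^d$ supported in a common closed ball, so the family is tight; Prokhorov's theorem yields a further subsequence along which $\mu_\lambda^n\to\mu^*$ weakly for some probability measure $\mu^*$ on $\R^d$. It remains to identify $\mu^*$ with $\mu_\lambda$ modulo the basis freedom allowed by the statement. By (\ref{scalgraph}), the purification of $[W_n]_\lambda$ is weakly isomorphic to the graphon on $(\supp\mu_\lambda^n, W'_{\lambda,n})$, which is completely determined by $\mu_\lambda^n$ together with the eigenvalues $\lambda_i^{W_n}$. Weak convergence of $\mu_\lambda^n$ and convergence of the eigenvalues force these purified graphons to converge (in cut distance) to the analogous graphon built from $\mu^*$ and $(\lambda_1^W,\dots,\lambda_d^W)$. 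On the other hand, continuity of spectral truncation along convergent graphon sequences, also from \cite{Sz2}, gives $[W_n]_\lambda\to[W]_\lambda$, whose purification is built from $\mu_\lambda$ and $(\lambda_i^W)$. Uniqueness of purification up to measure-preserving equivalence then forces $\mu^*=\mu_\lambda$ after an orthogonal change of coordinates within each eigenspace of $T_W$ corresponding to an eigenvalue of absolute value $\ge\lambda$, which is precisely the freedom of choice of orthonormal eigenvectors granted in the statement.

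The main obstacle is the identification step: one has to verify carefully that convergence in the $(\mu,\text{eigenvalues})$ data really translates into cut-norm convergence of the associated purified graphons, and that the purification is determined by $(\mu,\lambda_i)$ uniquely up to orthogonal rotation within each eigenspace. Both facts are packaged in the results of \cite{Sz2}, which is why the present lemma can be described there as a direct consequence of that paper.
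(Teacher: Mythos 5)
Your proposal is correct in outline, but it takes a noticeably different route from the paper's (which is present as a commented-out block in the source). The paper's proof is three sentences: it cites from \cite{Sz2} the convergence $[W_n]_\lambda\to[W]_\lambda$ in the $\delta_1$ metric, asserts that this makes any orthonormal eigenbasis of $[W_n]_\lambda$ element-by-element $L^2$-close to \emph{some} orthonormal eigenbasis of $[W]_\lambda$ once $n$ is large, and then invokes compactness of the set of orthonormal bases (i.e., of the block-orthogonal group $O_\lambda$) to extract the convergent subsequence. In other words, the paper works at the level of eigenfunctions and lets the measures come along for free. You instead work at the level of the measures themselves: uniform boundedness of eigenfunctions gives compact common support, Prokhorov gives a weak subsequential limit $\mu^*$, and then you identify $\mu^*$ with a rotated copy of $\mu_\lambda$ by observing that $(\mu,\lambda_1,\dots,\lambda_d)$ determines the finite-rank graphon via \eqref{scalgraph}, that weak convergence of compactly supported measures together with eigenvalue convergence gives cut-distance convergence of these purified truncations, and that purification uniqueness realizes the resulting weak isomorphism as a block-orthogonal change of basis. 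Both proofs use the same underlying ingredients (spectral stability of $[\,\cdot\,]_\lambda$ at a non-eigenvalue $\lambda$, and compactness to kill the $O_\lambda$ gauge freedom), but yours replaces the paper's ``it is easy to see'' eigenbasis-perturbation step by a cleaner, more structural identification via weak isomorphism and purification, at the cost of a few extra details (orthonormality of the coordinate functions passing to the limit $\mu^*$, purity of the graphon built from $\mu^*$, a.e.\ versus everywhere equality of polynomial kernels) that you gesture at but do not fully spell out.
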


\ignore{
\begin{proof}
It is proved in \cite{Sz2} that $[W_n]_\lambda$ converges to
$[W]_\lambda$ in the so-called $\delta_1$ metric. From this, it is
easy to see that for every $\epsilon>0$, if $n$ is big enough, then
for any orthonormal basis for the eigenvectors of $[W_n]_\lambda$
there is an orthonormal basis for the eigenvectors of $[W]_\lambda$
that is element-by-element $\epsilon$-close in $L^2$. The compactness
of the set possible orthonormal bases completes the proof.
\end{proof}
}

If $\alpha<\beta$, then the projection
\[
P_{\alpha,\beta}:\mathbb{R}^{\dim(U_\alpha)}
\rightarrow\mathbb{R}^{\dim(U_\beta)}
\]
(by forgetting the last $\dim(U_\alpha)-\dim(U_\beta)$ coordinates)
transforms $\mu_\alpha$ into $\mu_\beta$. The map $P_{\alpha,\beta}$
is surjective from $S_\alpha$ to $S_\beta$.

Let $\{\alpha_i\}_{i=1}^\infty$ be a decreasing sequence tending to
$0$. Let $S$ be the inverse limit of the system
$\{P_{\alpha_{i+1},\alpha_i}:S_{\alpha_{i+1}}\rightarrow
S_{\alpha_i}\}_{i=1}^\infty$. This means that
\[
S=\{(s_1,s_2,\dots):~\forall i\in\mathbb{N},
s_i=P_{\alpha_{i+1},\alpha_i}s_{i+1}\}\subset \prod_{i=1}^\infty
S_{\alpha_i}.
\]
The limit of $\{\mu_{\alpha_i}\}_{i=1}^\infty$ defines a probability
measure $\mu$ on the compact set $S$. Let $(S,U_{\alpha_i})$ be the
graphon defined on $S$ using the formula \eqref{scalgraph} for the
$i$-th coordinate.

\begin{lemma}\label{specpure}
For every graphon $(J,W)$ there is a measure preserving homeomorphism
$\tau:\overline{J}\rightarrow S$ such that
$(U_{\alpha_i})^\tau=[\,\overline{W}\,]_{\alpha_i}$ holds for every
$i$.
\end{lemma}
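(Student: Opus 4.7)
The plan is to construct $\tau$ coordinatewise from the eigenfunctions of $\overline{W}$. Since $(J,W)$ is pure, the eigenfunctions $f_r$ of $W$ may be taken continuous on $(\overline{J},\overline{r}_W)$ (Corollary 13.29 of \cite{Hombook}), and they remain eigenfunctions of $\overline{W}$ with the same eigenvalues $\lambda_r$. Order them so that $|\lambda_1|\ge|\lambda_2|\ge\cdots$, set $d_i=\dim U_{\alpha_i}$, and define
\[
\tau_i(x)=(f_1(x),\dots,f_{d_i}(x))\in S_{\alpha_i},\qquad
\tau(x)=(\tau_1(x),\tau_2(x),\dots).
\]
The compatibility $P_{\alpha_{i+1},\alpha_i}\circ\tau_{i+1}=\tau_i$ is immediate from the orderings, so $\tau(\overline{J})\subseteq S$. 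The conjugation identity is also immediate from \eqref{EQ:PARTSPEC} and \eqref{scalgraph}:
\[
U_{\alpha_i}(\tau_i(x),\tau_i(y))=\sum_{r=1}^{d_i}\lambda_r f_r(x)f_r(y)=[\,\overline{W}\,]_{\alpha_i}(x,y).
\]

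The real work is to show that $\tau$ is a measure preserving homeomorphism $\overline{J}\to S$. Continuity of $\tau$ into the product topology is automatic from continuity of each $f_r$. Injectivity reduces to Lemma \ref{specsepar}: its proof extends verbatim to $\overline{J}$, since $\overline{r}_W$ is a genuine metric there and the spectral identity $\overline{W}\circ\overline{W}(z,\cdot)=\sum_r\lambda_r^2 f_r(z)f_r(\cdot)$ holds in $L^2$ for every $z\in\overline{J}$ (this is standard: applying $T_{\overline{W}}^2$ to $f_j$ identifies the $j$-th Fourier coefficient of $\overline{W}\circ\overline{W}(z,\cdot)$ as $\lambda_j^2 f_j(z)$). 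Since $(\overline{J},\overline{r}_W)$ is compact and $S$ is Hausdorff, a continuous injection $\tau$ is automatically a homeomorphism onto its image.

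For measure preservation, the pushforward $\tau_*\pi$ has $i$-th marginal equal to $(\tau_i)_*\pi=\mu_{\alpha_i}$ by the very definition of $\mu_{\alpha_i}$; since these finite-dimensional marginals determine the inverse-limit measure, $\tau_*\pi=\mu$. Surjectivity then follows: $\tau(\overline{J})$ is compact, hence closed in $S$, and it carries the probability measure $\mu$, so $\tau(\overline{J})\supseteq\mathrm{supp}(\mu)$. It remains to check $\mathrm{supp}(\mu)=S$. For any $s=(s_1,s_2,\dots)\in S$, a basic open neighborhood in $S$ contains a cylinder $\{t\in S:t_k\in V\}$ for some $k$ and some open $V\subseteq S_{\alpha_k}$ containing $s_k$; by compatibility of the inverse system this cylinder has $\mu$-measure $\mu_{\alpha_k}(V)>0$, since $s_k\in S_{\alpha_k}=\mathrm{supp}(\mu_{\alpha_k})$.

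The main obstacle is the bookkeeping in the last paragraph: one has to be careful distinguishing the measure $\mu$ on the inverse-limit space $S$ from the pushforward $\tau_*\pi$, and then identify $S$ with $\mathrm{supp}(\mu)$ in order to upgrade the homeomorphism onto the image into a surjection. Once this is set up, the rest is a direct application of the continuity and separation properties of eigenfunctions collected in Lemma \ref{specsepar} and Lemma \ref{LEM:UNIF-CONV}, together with compactness of $(\overline{J},\overline{r}_W)$.
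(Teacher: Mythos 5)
Your proof is correct and follows essentially the same route as the paper: you build $\tau$ coordinatewise from the continuous eigenfunctions, get injectivity from Lemma \ref{specsepar} (applied, as the paper notes, to $W\circ W$), and use compactness of $(\overline{J},\overline{r}_W)$ to upgrade a continuous injection to a homeomorphism onto its image. You supply additional detail where the paper is terse — in particular, the measure-preservation check via marginals and the surjectivity argument via $\mathrm{supp}(\mu)=S$ — which the paper dismisses with ``it is a bijection'' and ``the desired property is clear from the definition of $\tau$,'' but these are exactly the steps the paper has in mind.
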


\begin{proof}
Notice that the construction of $(S,U_{\alpha_i})$ depends only on
the weak isomorphism class of $W$, and so we can assume that $(J,W)$
is pure. The maps $\tau_i:~x\mapsto (f_1(x),f_2(x),\dots,f_d(x))$
from $\overline{J}$ to $S_i$ (where $d=\dim(U_{\alpha_i})$) are
continuous in the $\overline{r}_W$ metric. Hence the map
$\tau=(\tau_1,\tau_2,\dots):~\overline{J}\to S$ is also continuous.
Since $\tau$ separates elements in $\overline{J}$ (to see this, apply
lemma \ref{specsepar} for $W\circ W$), it is a bijection between
$\overline{J}$ and $S$. The desired property is clear from the
definition of $\tau$.
\end{proof}

\subsection{Subdividing edges}

As an application of spectral decomposition, we prove the following
generalization of Lemma 5.1 in \cite{BCL} (which will be needed later
on).

\begin{lemma}\label{LEM:ORBITS2}
Let $(J_1,W_1)$ and $(J_2,W_2)$ be two pure graphons and let $a\in
J_1^k$, $b\in J_2^k$. Let $h$ be a $k$-labeled quantum multigraph. In
every constituent of $h$, select an edge such that at least one
endpoint of it is unlabeled, and let $h_m$ denote the $k$-labeled
quantum multigraph obtained from $h$ by subdividing the selected edge
by $m-1$ new nodes in every constituent. Suppose there exists an
$m_0\ge 2$ such that $t_a(h_m,W_1)=t_b(h_m,W_2)$ for every $m\ge
m_0$. Then $t_a(h,W_1)=t_b(h,W_2)$.
\end{lemma}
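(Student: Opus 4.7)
My plan is to use the spectral decomposition of $W_1$ and $W_2$ to convert the hypothesis into a system of moment equations on the union of their spectra, and then to peel off the eigenvalues one by one according to their size.

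Fix a constituent $F$ of $h$ and let $e=uv$ be its selected edge, with $v$ unlabeled. Subdividing $e$ by $m-1$ nodes and integrating them out replaces $W(x_u,x_v)$ in the homomorphism integral by the iterated kernel
$$W^{(m)}(x_u,x_v)=\int W(x_u,z_1)W(z_1,z_2)\cdots W(z_{m-1},x_v)\,dz_1\cdots dz_{m-1},$$
which by \eqref{EQ:SPEC1} equals $\sum_r\lambda_r^m f_r(x_u)f_r(x_v)$ in $L^2(J\times J)$. Denoting by $Q_F(x_u,x_v,a)$ the integral over all remaining unlabeled vertices of the product of $W$-values on the edges of $F$ other than $e$ (with $x_u=a_{j_u}$ frozen and the $x_u$-integration dropped if $u$ is labeled), Lemma \ref{LEM:MORE-CONV} together with Bessel's inequality on the coefficients $\langle f_r\otimes f_r,Q_F\rangle$ yields
$$t_a(F_m,W_1)=\sum_r\lambda_r^m A_{F,r}(a),$$
with coefficients $A_{F,r}(a)$ independent of $m$ and with absolute convergence for every $m\geq 1$ (by Cauchy--Schwarz, using $\sum_r\lambda_r^2\leq 1$). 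Linearly combining over constituents gives $t_a(h_m,W_1)=\sum_r\lambda_r^m A_r(a)$ and, analogously, $t_b(h_m,W_2)=\sum_{r'}(\lambda'_{r'})^m B_{r'}(b)$.

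Now regroup by distinct eigenvalues: for $\mu\in\R\setminus\{0\}$, set
$$C_\mu=\sum_{r:\lambda_r=\mu}A_r(a)-\sum_{r':\lambda'_{r'}=\mu}B_{r'}(b),$$
a finite sum because each nonzero eigenvalue of a Hilbert--Schmidt kernel has finite multiplicity. The hypothesis then reads $\sum_\mu\mu^m C_\mu=0$ for every $m\geq m_0$, and the bounds above give $\sum_\mu|\mu||C_\mu|<\infty$. The heart of the argument is to show $C_\mu=0$ for every $\mu\neq 0$. Let $M=\sup\{|\mu|:C_\mu\neq 0\}$; since the nonzero eigenvalues of each graphon form an isolated set in $[-1,1]\setminus\{0\}$, if $M>0$ then it is attained by at most the two values $\pm M$. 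Dividing $\sum_\mu\mu^m C_\mu=0$ by $M^m$ gives
$$C_M+(-1)^m C_{-M}+\sum_{|\mu|<M}(\mu/M)^m C_\mu=0,$$
and the tail tends to $0$ as $m\to\infty$: the finitely many terms with $M/2\leq|\mu|<M$ each die out individually, while $\sum_{|\mu|<M/2}|\mu/M|^m|C_\mu|\leq(1/2)^{m-1}M^{-1}\sum_\mu|\mu||C_\mu|\to 0$. Letting $m\to\infty$ along even and odd subsequences forces $C_M+C_{-M}=0$ and $C_M-C_{-M}=0$, so $C_M=C_{-M}=0$, contradicting the choice of $M$. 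Hence $C_\mu=0$ for all $\mu\neq 0$, and
$$t_a(h,W_1)-t_b(h,W_2)=\sum_r\lambda_r A_r(a)-\sum_{r'}\lambda'_{r'} B_{r'}(b)=\sum_{\mu\neq 0}\mu C_\mu=0.$$

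The spectral expansion and regrouping are routine; the substantive step is the peeling. One might instead hope to approximate $\lambda\mapsto\lambda$ uniformly by polynomials of the form $\sum_{m\geq m_0}c_m\lambda^m$ and integrate against the signed measure $\sum_\mu C_\mu\delta_\mu$, but this is impossible on any set whose closure contains $0$, which is precisely our situation since the eigenvalues accumulate at $0$. The peeling bypasses this difficulty by exploiting asymptotic dominance of the largest eigenvalues as $m\to\infty$, together with the uniform absolute summability $\sum_\mu|\mu||C_\mu|<\infty$ supplied by Lemma \ref{LEM:MORE-CONV}.
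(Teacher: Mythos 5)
Your peeling argument is correct and even makes explicit what the paper delegates to a cited proposition (Proposition A.21 of~\cite{Hombook}): the reduction to moment equations $\sum_\mu\mu^m C_\mu=0$ via spectral decomposition is the same idea as the paper's. However, there is a genuine gap in the step you dismiss as routine. You assert that $t_a(F_m,W_1)=\sum_r\lambda_r^m A_{F,r}(a)$ with $m$-independent coefficients holds for \emph{every} $m\ge1$, citing Lemma~\ref{LEM:MORE-CONV}. That lemma gives the expansion only for \emph{almost all} $y\in J$. This is harmless when the selected edge has both endpoints unlabeled (a pure $L^2(J\times J)$ pairing), and also harmless for all $m\ge 2$ even when one endpoint $u$ is labeled, because $W^{\circ m}(x,\cdot)=\sum_r\lambda_r^m f_r(x)f_r(\cdot)$ holds in $L^2(J)$ for \emph{every} $x$ once $m\ge 2$: the component of $W(x,\cdot)$ orthogonal to the eigenfunctions lies in $\ker T_W$ and is annihilated by the extra applications of $T_W$. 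But for $m=1$ that orthogonal component survives, and you need the identity at the specific point $a_i$, which may lie in the exceptional null set. Your final line, $t_a(h,W_1)=\sum_r\lambda_r A_r(a)$, is exactly the $m=1$ case, which is exactly the conclusion you set out to prove; as written it is unjustified.

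This is precisely where the purity hypothesis enters, and it is why the paper invokes Lemma~\ref{LEM:UNIF-CONV}(b) rather than Lemma~\ref{LEM:MORE-CONV}: for a pure graphon, the uniform convergence of the spectral tail (Lemma~\ref{LEM:UNIF-EIGEN}) and the continuity of the eigenfunctions and of the restricted homomorphism densities $t_{ax}(g_i',W_1)$ (Lemma~\ref{LEM:HOM-CONT}) upgrade the a.e.\ identity to an identity at every point. The fix is small given your structure --- replace the citation of Lemma~\ref{LEM:MORE-CONV} by Lemma~\ref{LEM:UNIF-CONV} and check the continuity hypothesis for the relevant test function --- but you should not present the spectral expansion as the easy part: it is the place where the pure-graphon assumption does its work, and without it the lemma is false.
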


\begin{proof}
Let $g_i$ be obtained from $h$ by keeping only those terms in which
one endpoint of the selected edge is labeled $i$ ($1\le i\le k$). Let
$g_0$ be the sum of the remaining terms, where the selected edge has
no labeled endpoint. Let $g_i'$ be the $(k+1)$-labeled quantum
multigraph obtained from $g_i$ by deleting the selected edge from
each constituent and labeling its unlabeled endpoint by $k+1$. Let
$g_0'$ be the $(k+2)$-labeled quantum multigraph obtained from $g_0$
by deleting the selected edge from each constituent and labeling its
endpoints by $k+1$ and $k+2$. Then
\begin{align*}
t_a(h_m, W_1)=&\sum_{i=1}^k
\int\limits_{J_1} W_1^{\circ m}(a_i,x)t_{ax}(g_i',W_1)\,dx\\
&+\int\limits_{J_1\times J_1} W_1^{\circ m}(x,y)t_{axy}(g_0',W_1)\,dx\,dy.
\end{align*}
We use the spectral decomposition
\begin{equation}\label{EQ:SPEC}
W_1^{\circ m}(x,y)\sim\sum_{r=1}^\infty \lambda_r^m f_r(x)f_r(y).
\end{equation}
This decomposition holds almost everywhere for $m\ge2$, but for
$m=1$, we can only claim that the sums on the right sides converge to
the function on the left in $L^2$. Since the graphon is pure, Lemma
\ref{LEM:UNIF-CONV} implies that the expansion
\begin{align*}
t_a(h_m, W_1)=& \sum_{i=1}^k\sum_{r=1}^\infty \lambda_r^m f_r(a_i)
\int\limits_{J_1} t_{ax}(g_i',W_1)f_r(x) \,dx\\
&+ \sum_{r=1}^\infty \lambda_r^m \int\limits_{J_1\times J_1}
f_r(x)f_r(y)t_{axy}(g_0',W_1)\,dx\,dy
\end{align*}
holds for all $m\ge 1$. We have an analogous expansion for $t_b(h_m,
W_2)$. If these two expressions are equal for every integer $m\ge
m_0$, then they are also equal for $m=1$ (see e.g. \cite{Hombook},
Proposition A.21).
\end{proof}

\begin{corollary}\label{COR:ORBITS2}
Let $(J_1,W_1)$ and $(J_2,W_2)$ be two pure graphons and let $a\in
J_1^k$, $b\in J_2^k$.

\smallskip

{\rm(a)} If
\begin{equation}\label{EQ:EQUAL2}
t_a(F,W_1)=t_b(F,W_2)
\end{equation}
for every $k$-labeled simple graph $F$, then \eqref{EQ:EQUAL2} holds
for every $k$-labeled multigraph $F$.

\smallskip

{\rm(b)} If \eqref{EQ:EQUAL2} holds for every $k$-labeled simple
graph $F$ with nonadjacent labeled nodes, then \eqref{EQ:EQUAL2}
holds for every $k$-labeled multigraph $F$ with nonadjacent labeled
nodes.
\end{corollary}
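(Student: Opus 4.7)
The plan is to deduce both parts from Lemma \ref{LEM:ORBITS2}, proving (b) first by induction and then reducing (a) to (b).

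For part (b), I would induct on the ``multi-edge excess'' $c(F)=\sum_{\{u,v\}}\max(m_{uv}(F)-1,0)$, where $m_{uv}(F)$ is the multiplicity of the edge $\{u,v\}$ in $F$. The base case $c(F)=0$ says $F$ is simple, which is exactly the hypothesis. For the inductive step, pick a pair $\{u,v\}$ with $m_{uv}(F)\ge 2$; since labeled nodes are non-adjacent in $F$, at least one of $u,v$ is unlabeled. Select one of the parallel copies as the edge $e$ and apply Lemma \ref{LEM:ORBITS2} with $h=F$ (viewed as a one-term quantum graph) and $m_0=2$. For each $m\ge 2$, the subdivision $F_m$ replaces $e$ by a path of length $m$ through $m-1$ fresh unlabeled nodes, so $F_m$ still has nonadjacent labeled nodes and its complexity drops to $c(F_m)=c(F)-1$ (the detour path contributes only single edges incident to new vertices and hence cannot create any new parallel class). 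The induction hypothesis then gives $t_a(F_m,W_1)=t_b(F_m,W_2)$ for every $m\ge 2$, and Lemma \ref{LEM:ORBITS2} yields the conclusion for $F$ itself.

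For part (a), applying the hypothesis to the $k$-labeled simple graph consisting of only the $k$ labeled vertices and a single edge between the labels $i$ and $j$ gives the pointwise agreement $W_1(a_i,a_j)=W_2(b_i,b_j)$ for every pair $i,j\in[k]$. Then for an arbitrary $k$-labeled multigraph $F$ with multiplicities $m_{ij}$ of edges between the labeled nodes $i,j$, let $F^{\circ}$ be obtained from $F$ by deleting all edges between labeled nodes. Then $F^{\circ}$ is a $k$-labeled multigraph with nonadjacent labeled nodes, and factoring out the constant contributions from edges between labeled nodes in the defining integral yields
\[
t_a(F,W_1)=\Bigl(\prod_{i<j}W_1(a_i,a_j)^{m_{ij}}\Bigr)\,t_a(F^{\circ},W_1),
\]
and analogously for $W_2$. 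The hypothesis of (a) implies the hypothesis of (b), so part (b) applies to $F^{\circ}$; combining with the pointwise equalities above gives $t_a(F,W_1)=t_b(F,W_2)$.

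The only substantive step is the induction in (b); the key observation making it run is that subdividing one copy of a parallel edge through \emph{fresh} unlabeled nodes strictly decreases the complexity $c$ while preserving non-adjacency of the labels. Everything else — the factorization used in (a), the fact that the $k$-labeled edge $F_{ij}$ detects $W(a_i,a_j)$, and the verification that $F_m$ has the right labeling — is routine.
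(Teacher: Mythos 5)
Your proof is correct and takes essentially the same route as the paper's: part (b) by induction on (a version of) the number of parallel edges, using Lemma \ref{LEM:ORBITS2} to peel off one parallel edge by subdivision, and part (a) by observing that the single-edge simple graph detects $W_1(a_i,a_j)=W_2(b_i,b_j)$ and then factoring those constant terms out of the integral to reduce to (b). The paper states this tersely; you have simply filled in the bookkeeping (the explicit complexity measure, the check that subdivision keeps labeled nodes nonadjacent, and the factorization identity), all of which is correct.
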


\begin{proof}
(b) follows from Lemma \ref{LEM:ORBITS2} by induction on the number
of parallel edges. To prove (a), it suffices to note that
$W_1(a_i,a_j)=W_2(b_i,b_j)$ follows by considering the simple graph
$F$ with a single edge connecting $i$ and $j$.
\end{proof}

\subsection{Automorphism groups and spectral decomposition}

Let $g:~J\rightarrow J$ be an automorphism of a graphon $(J,W)$.
Notice that if $f$ is an eigenfunction of length $1$ of $W$ then
$f^g$ is also an eigenfunction of length $1$ corresponding to the
same eigenvalue. As a consequence every automorphism of $W$ acts on
the space $U_\lambda$ defined in \eqref{EQ:UL-DEF} as an element in
$O_{\lambda}:=\bigoplus_{|\lambda_r|\ge \lambda}O(E_{\lambda_r})$
where $O(E_{\lambda_r})$ is the orthogonal group on $E_{\lambda_r}$.
The corresponding action on the dual space $U^*_\lambda$ leaves the
measure $\mu_\lambda$ invariant. We will denote by $\Gamma_\lambda$
the finite dimensional compact group formed by all elements
$O_{\lambda}$ that preserve $\mu_\lambda$. (Note that
$\Gamma_\lambda$ is the automorphism group of $[W]_\lambda$.)

The group $O_\alpha$ acts on both $U_\alpha$ and $U^*_\alpha$. Since
$U_\beta$ is an invariant subspace of $O_\alpha$, the group
$O_\alpha$ acts on $U^*_\beta$ as well. In particular, there is a
homomorphism $h_{\alpha,\beta}:~\Gamma_\alpha\to \Gamma_\beta$. We
denote by $\Gamma_W$ the inverse limit of the system
$\{h_{\alpha_{i+1},\alpha_i}\}_{i=1}^\infty$.

We can describe the automorphism group of a compact graphon using
representation of a graphon above.

\begin{lemma}\label{specaut}
For every graphon $(J,W)$ the action of $\Aut(W)$ on $\overline{J}$
can be obtained as $\tau^{-1}\circ\Gamma_W\circ\tau$, where $\tau$ is
the function in Lemma \ref{specpure}.
\end{lemma}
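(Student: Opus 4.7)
The plan is to verify both inclusions $\Aut(W)\subseteq\tau^{-1}\Gamma_W\tau$ and $\tau^{-1}\Gamma_W\tau\subseteq\Aut(W)$. Since everything depends only on the weak isomorphism class of $W$, I may assume $(J,W)$ is pure, so Lemma \ref{specpure} identifies $\overline{J}$ with $S$ via the eigenfunction coordinates, and each finite-rank truncation $[\,\overline{W}\,]_{\alpha_i}$ corresponds to the graphon $U_{\alpha_i}$ on $S_{\alpha_i}$ given by \eqref{scalgraph}.

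For the inclusion $\Aut(W)\subseteq\tau^{-1}\Gamma_W\tau$, fix $g\in\Aut(W)$. The operator $f\mapsto f\circ g$ is a unitary on $L^2(J)$ that commutes with $T_W$ (a direct computation using $W(x^g,y^g)=W(x,y)$), and hence preserves each eigenspace $E_{\lambda_r}$, acting on it by an orthogonal transformation. Assembling these blockwise for $|\lambda_r|\ge\alpha$ gives an element $\gamma_\alpha\in O_\alpha$; since $g$ is measure preserving, the pushforward $\mu_\alpha$ on $U_\alpha^*$ is $\gamma_\alpha$-invariant, so $\gamma_\alpha\in\Gamma_\alpha$. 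Compatibility of the $\gamma_{\alpha_i}$ under $h_{\alpha_{i+1},\alpha_i}$ is automatic, producing $\gamma\in\Gamma_W$. By the very definition of $\tau$ as the eigenfunction coordinate map, $\tau(x^g)=\gamma(\tau(x))$ for every $x$, i.e.\ $g=\tau^{-1}\gamma\tau$.

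For the reverse inclusion, fix $\gamma=(\gamma_{\alpha_i})\in\Gamma_W$ and set $\phi:=\tau^{-1}\circ\gamma\circ\tau:\overline{J}\to\overline{J}$. This is a bijective homeomorphism because $\tau$ is, and each $\gamma_{\alpha_i}$ is a homeomorphism of $S_{\alpha_i}$; it is measure preserving because $\tau$ is, and each $\gamma_{\alpha_i}$ preserves $\mu_{\alpha_i}$, forcing $\gamma$ to preserve the inverse-limit measure on $S$. Since $\gamma_{\alpha_i}$ acts orthogonally and block-diagonally across eigenspaces, the bilinear form $\sum_r\lambda_r x_r y_r$ that defines $U_{\alpha_i}$ is invariant, hence $U_{\alpha_i}^{\gamma_{\alpha_i}}=U_{\alpha_i}$, which via Lemma \ref{specpure} translates to $[\,\overline{W}\,]_{\alpha_i}^\phi=[\,\overline{W}\,]_{\alpha_i}$ for every $i$. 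Letting $\alpha_i\to0$ and using $[\,\overline{W}\,]_{\alpha_i}\to\overline{W}$ in $L^2$ yields $\overline{W}^\phi=\overline{W}$ almost everywhere. The same block-diagonal argument applied to $W\circ W=\sum_r\lambda_r^2 f_r(x)f_r(y)$ shows that $\phi$ preserves $W\circ W$, and hence the pointwise value of $\overline{r}_W(x,y)=\|(W\circ W)(x,\cdot)-(W\circ W)(y,\cdot)\|_1$, so $\phi$ is an isometry of $(\overline{J},\overline{r}_W)$. Lemma \ref{LEM:ISO-NULL} then upgrades $\phi$ to a genuine automorphism of $W$.

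The subtle step I expect to require the most care is in the reverse inclusion: passing from the a.e.\ identity $\overline{W}^\phi=\overline{W}$ (which only needs the $L^2$-convergence of the partial sums) to the per-point condition demanded by Definition \ref{DEF:AUTO}. This is exactly the role of Lemma \ref{LEM:ISO-NULL}, and it is why one must establish separately that $\phi$ is a true isometry of $\overline{r}_W$; the spectral identity for $W\circ W$ is what makes this isometry manifest. Once that is in hand the rest is bookkeeping between the direct system on $J$ and the inverse system on $S$.
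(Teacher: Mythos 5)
Your proof is correct and follows the same overall strategy as the paper: prove both inclusions, and use the eigenfunction coordinate map $\tau$ from Lemma \ref{specpure} to translate between $\Aut(W)$ and $\Gamma_W$. Your treatment of the forward inclusion matches the paper's argument exactly (automorphisms act orthogonally and blockwise on the eigenspaces, yielding a consistent element of the inverse limit). Where you differ in detail is the reverse inclusion: the paper simply asserts that invariance of every $[\,\overline{W}\,]_{\alpha_i}$ forces invariance of $\overline{W}$, while you carefully separate the $L^2$-level conclusion $\overline{W}^\phi=\overline{W}$ a.e.\ from the per-point requirement in Definition \ref{DEF:AUTO}, and bridge the gap by verifying that $\phi$ is an $\overline{r}_W$-isometry (via the uniformly convergent spectral expansion of $W\circ W$, invariant under the block-orthogonal action) and then invoking Lemma \ref{LEM:ISO-NULL}. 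This is a valid alternative to the paper's implicit argument (which one can read as using the uniform-in-$x$ convergence $\|[\,\overline{W}\,]_{\alpha_i}(x,\cdot)-\overline{W}(x,\cdot)\|_2\to0$ from Lemma \ref{LEM:UNIF-EIGEN}), and has the merit of explicitly routing through a lemma the paper proved for exactly this kind of upgrade. One small point worth making explicit: that $\sum_r\lambda_r^2 f_r(x)f_r(y)$ equals $(W\circ W)(x,y)$ pointwise (not just a.e.) follows from \eqref{EQ:SUMSMALL0}, which shows $W(x,\cdot)$ has no component in the nullspace of $T_W$ for any $x$; this pointwise identity is what your isometry argument needs.
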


\begin{proof}
We may assume that the graphon $(J,W)$ is pure. First we show that
$\Aut(W)\subseteq \tau^{-1}\circ\Gamma_W\circ\tau$. Every
automorphism of $W$, restricted to $U_{\alpha_i}$ $(i=1,2,\dots)$,
induces a consistent sequence of elements in $\prod_{i=1}^\infty
\Gamma_{\alpha_i}$. It follows that
$\tau\circ\Aut(W)\circ\tau^{-1}\subseteq\Gamma_W$. The other
containment is a direct consequence of Lemma \ref{specpure}: elements
of $\tau^{-1}\circ\Gamma_W\circ\tau$ act on $\overline{J}$
continuously and leave $[\,\overline{W}\,]_{\alpha_i}$ invariant for
every $i$. This means that they also fix $\overline{W}$.
\end{proof}

\section{Orbits of the automorphism group}

\subsection{Characterization of the orbits}

The following theorem characterizes the orbits of the automorphism
group of a graphon.

\begin{theorem}\label{THM:ORBITS}
Let $(J,W)$ be a pure graphon, and let
$a_1,\dots,a_k,b_1,\dots,b_k\in J$. Then there exists an automorphism
$\phi\in\Aut(J,W)$ such that $a_i^\phi=b_i$ if and only if
$t_{a_1\dots a_k}(F,W)=t_{b_1\dots b_k}(F,W)$ for every $k$-labeled
simple graph $F$ in which the labeled nodes are independent.
\end{theorem}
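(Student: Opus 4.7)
The ``only if'' direction is immediate: every $\phi\in\Aut(J,W)$ preserves $W$ and hence all restricted homomorphism densities in its arguments. The substance lies in the converse, which I attack in three stages: an extension lemma, a back-and-forth construction, and a verification via Lemma \ref{LEM:ISO-NULL}. By Corollary \ref{COR:ORBITS2}(b), I may (and do) use the strengthened hypothesis that $t_{a_1\dots a_k}(F,W)=t_{b_1\dots b_k}(F,W)$ for every $k$-labeled \emph{multigraph} with nonadjacent labels.

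The key \emph{extension lemma} says: whenever $(a_1,\dots,a_k)$ and $(b_1,\dots,b_k)$ match in all such invariants and $a_{k+1}\in\overline{J}$ is arbitrary, there exists $b_{k+1}\in\overline{J}$ such that the matching persists for the extended $(k+1)$-tuples. To prove it, define the type map
\[
\tau_a:\overline{J}\to[0,1]^{\GG_{k+1}^0},\qquad \tau_a(x)_F = t_{a_1\dots a_k x}(F,W),
\]
and $\tau_b$ analogously. Both are continuous in $\overline{r}_W$ (Lemma \ref{LEM:HOM-CONT}), so the images are compact; since $\pi$ has full support, $\tau_a(\overline{J})$ equals the support of $(\tau_a)_\ast\pi$. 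For any $F_1,\dots,F_m\in\GG_{k+1}^0$ and exponents $p_j\in\N$,
\[
\int_{\overline{J}}\prod_j t_{a_1\dots a_k x}(F_j,W)^{p_j}\,d\pi(x)=t_{a_1\dots a_k}(H,W),
\]
where $H$ is the $k$-labeled multigraph with nonadjacent labels obtained by multiplying $\prod_j F_j^{p_j}$ in $\QQ_{k+1}$ and then unlabeling the $(k+1)$-th label. By the strengthened hypothesis the right side equals its $b$-analogue, so all joint moments of $(\tau_a)_\ast\pi$ and $(\tau_b)_\ast\pi$ agree; bounded moments determine a bounded probability measure, so the two pushforwards, and hence their supports, coincide. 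This gives $\tau_a(\overline{J})=\tau_b(\overline{J})$ and produces $b_{k+1}$.

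Next, fix a countable $\overline{r}_W$-dense subset $\{z_n\}$ of $\overline{J}$ and run a back-and-forth on $\{z_n\}$: starting from $a_i\mapsto b_i$, at odd step $2n-1$ insert $z_n$ into the domain via the extension lemma; at even step $2n$ insert $z_n$ into the range (swapping the roles of $a$ and $b$). Injectivity is automatic: if $\phi(x)=\phi(x')$, then using the 2-labeled quantum graph $h$ of \eqref{EQ:DIST-HOM} placed on the last two labels (other labels at isolated vertices) we get
\[
d_W(x,x')^2=t_{xx'}(h,W)=t_{\phi(x)\phi(x')}(h,W)=0,
\]
and $d_W$ extends continuously to a metric on $\overline{J}$ by Lemma \ref{specsepar} applied to $W\circ W$. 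The same calculation for an edge-subdivision of $h$ shows $\phi$ preserves $d_{W\circ W}$, a metric uniformly equivalent to $\overline{r}_W$; hence $\phi$ extends uniquely to a $\overline{r}_W$-homeomorphism of $\overline{J}$.

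It remains to verify the hypotheses of Lemma \ref{LEM:ISO-NULL}. The a.e.\ identity $\overline{W}^\phi=\overline{W}$ comes from Lemma \ref{LEM:FORMULA}: each $U_n(x,y)=t_{xy}(g_n,W)/t_x(f_n,W)$ is a ratio of restricted densities on graphs with nonadjacent labels, hence preserved by $\phi$ on the dense constructed set, so the $L^1$-limit $W$ satisfies $W(\phi(x),\phi(y))=W(x,y)$ almost everywhere. Measure-preservation of $\phi$ follows because, under the canonical spectral identification $\overline{J}\cong S$ of Lemma \ref{specpure}, the induced map on each $S_{\alpha_i}$ lies in $\Gamma_{\alpha_i}$ (which preserves $\mu_{\alpha_i}$), the Gram data recoverable from restricted path and power densities being enough to pin $a$ and $b$ to a common $\Gamma_{\alpha_i}$-orbit. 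Lemma \ref{LEM:ISO-NULL} then upgrades $\phi$ to a genuine automorphism with $a_i^\phi=b_i$. I expect the main obstacle to be this last alignment: translating the pointwise construction on a $\overline{r}_W$-dense set into the a.e.\ kernel identity and measure preservation simultaneously, which is exactly where Lemmas \ref{LEM:FORMULA}, \ref{specpure}, and \ref{LEM:ISO-NULL} combine.
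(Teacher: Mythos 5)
The ``only if'' direction, the reduction to multigraphs via Corollary \ref{COR:ORBITS2}(b), and your extension lemma are all sound. In particular, identifying $\tau_a(\overline{J})$ with the support of $(\tau_a)_*\pi$ on the compact space $\overline{J}$ and matching all joint moments of the two pushforwards is a nice reformulation of what is really going on. The structural departure from the paper is that you then run a \emph{deterministic} back-and-forth on a fixed dense sequence, whereas the paper (Lemmas \ref{LEM:COUPLE}, \ref{LEM:COUPLE2}, \ref{LEM:MAPPING}) builds a \emph{probabilistic coupling} of $\pi^{\N}$ with itself (via Lemma 6.2 of \cite{BCL}) so that the two random dense sequences $a=(a_1,\dots,a_k,X_1,X_2,\dots)$ and $b=(b_1,\dots,b_k,Y_1,Y_2,\dots)$ are i.i.d.\ samples from the two measures and match a.s.\ in all invariants. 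This difference is not cosmetic; it is exactly what makes measure-preservation come for free.

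The gap is in the last paragraph. Your argument that the map induced on each $S_{\alpha_i}$ lies in $\Gamma_{\alpha_i}$ is circular: the Gram data recovered from the path densities $t_{xy}(P_m^{\bullet\bullet},W)=W^{\circ (m-1)}(x,y)$ does force $v_{\lambda}\circ\phi = O_\lambda\circ v_\lambda$ for some orthogonal $O_\lambda$ on each eigenspace, but membership in $\Gamma_{\alpha_i}$ additionally requires $(O_\lambda)_*\mu_\lambda=\mu_\lambda$ --- which is precisely $\phi_*\pi=\pi$ read through $\tau_\lambda$, i.e.\ the statement being proved. The moment identities at your disposal are identities in $x$ (restricted densities), which say nothing about the law of $x$; integrating them against $\pi$ gives tautologies. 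Moreover, your verification of $\overline{W}^\phi=\overline{W}$ a.e.\ also quietly leans on measure-preservation: $U_n(\phi(x),\phi(y))=U_n(x,y)$ everywhere is fine (separate $\overline{r}_W$-continuity extends the identity from $D\times D$), but to pass to the limit and land on $W(\phi(x),\phi(y))$ you need $U_n\circ(\phi\times\phi)\to W\circ(\phi\times\phi)$ in $L^1(\pi\times\pi)$, which requires $\phi_*\pi\ll\pi$. In the paper's proof these difficulties are absorbed by the random choice of the dense set: $\phi$ sends an i.i.d.\ $\pi$-sequence to an i.i.d.\ $\pi$-sequence by construction, so $\phi_*\pi=\pi$ follows from the law of large numbers and continuity, after which the $L^1$ bookkeeping via Lemma \ref{LEM:FORMULA} closes. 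Your back-and-forth, applied to an arbitrary dense $\{z_n\}$, offers at each step only \emph{existence} of a matching image (via the support identity), with no control linking the choices to the measure; to repair the proof you would need to replace the extension lemma by a genuine coupling, which is essentially Lemma \ref{LEM:COUPLE}.
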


The following version is more general (at least formally).

\begin{theorem}\label{THM:ORBITS2}
Let $(J_1,W_1)$ and $(J_2,W_2)$ be two pure graphons and let
$\alpha_i\in J_i^k$. Then there exists a measure preserving bijection
$\phi:~J_1\to J_2$ such that $W_2^\phi=W_1$ almost everywhere and
$\alpha_{1,i}^\phi=\alpha_{2,i}$ if and only if
$t_{\alpha_1}(F,W_1)=t_{\alpha_2}(F,W_2)$ for every $k$-labeled
simple graph $F$.
\end{theorem}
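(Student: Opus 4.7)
The only-if direction is a straightforward change of variables: setting $y_i=\phi(x_i)$ in the integral \eqref{EQ-REST-HOM} defining $t_{\alpha_2}(F,W_2)$ and invoking that $\phi$ is measure-preserving, satisfies $W_2^\phi=W_1$ a.e., and sends $\alpha_{1,i}$ to $\alpha_{2,i}$ transforms the integrand into the one defining $t_{\alpha_1}(F,W_1)$.

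For the if-direction, my plan is to first reduce to the single-graphon orbit statement (Theorem \ref{THM:ORBITS}). Applying the hypothesis to $k$-labeled graphs $F$ whose labeled vertices are isolated yields $t(F',W_1)=t(F',W_2)$ for every unlabeled simple $F'$, so $W_1$ and $W_2$ are weakly isomorphic; since both are pure, the BCL theorem combined with the canonical nature of the pure representative supplies a measure-preserving bijection $\psi:J_1\to J_2$ with $W_2^\psi=W_1$ a.e. Setting $\beta=(\psi(\alpha_{1,1}),\dots,\psi(\alpha_{1,k}))\in J_2^k$, the already-established only-if direction gives $t_\beta(F,W_2)=t_{\alpha_2}(F,W_2)$ for every $F$, and by Corollary \ref{COR:ORBITS2}(a) this equality extends automatically to all $k$-labeled multigraphs. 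Thus it suffices to show: within a pure graphon $(J,W)$, two tuples $a,b\in J^k$ with $t_a(F,W)=t_b(F,W)$ for every $k$-labeled (multi)graph $F$ lie in a common $\Aut(W)$-orbit.

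For this I would use the spectral description of $\Aut(W)$. By Lemma \ref{specaut} the action of $\Aut(W)$ on $\overline{J}$ is conjugate, via the embedding $\tau:\overline{J}\to S=\varprojlim S_{\alpha_i}$, to the action of $\Gamma_W=\varprojlim_i\Gamma_{\alpha_i}$, so $a$ and $b$ lie in the same $\Aut(W)$-orbit iff $\tau(a)$ and $\tau(b)$ lie in the same $\Gamma_W$-orbit on $S^k$. A standard compactness-plus-inverse-limit argument (each $\Gamma_{\alpha_i}$ is a compact Lie group acting continuously on the compact metric space $S_{\alpha_i}^k$) reduces this to showing that, at every spectral level $\alpha=\alpha_i$, the projections $\tau_i(a),\tau_i(b)\in S_\alpha^k\subset(\mathbb{R}^{d})^k$ lie in the same $\Gamma_\alpha$-orbit, where $d=\dim U_\alpha$. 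Since $\Gamma_\alpha$ is a compact group acting on a finite-dimensional real vector space, its orbits on $(\mathbb{R}^d)^k$ are separated by the $\Gamma_\alpha$-invariant polynomials in the coordinates.

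The main step — and where I expect the chief obstacle — is to show that these polynomial invariants are controlled by the restricted homomorphism densities at the tuple. The mechanism is as in Lemma \ref{LEM:ORBITS2}: subdividing each labeled-labeled edge of a multigraph $F$ exactly $m$ times and using the spectral expansion $W^{\circ m}(a_i,a_j)=\sum_r\lambda_r^m f_r(a_i)f_r(a_j)$ turns $t_a(F_m,W)$ into a series in the eigenvalues whose coefficients are polynomials in $(f_r(a_j))_{r,j}$; varying $m$ and taking $\mathbb{R}$-linear combinations isolates the contribution from eigenvalues $|\lambda_r|\geq\alpha$, producing all $O_\alpha$-invariant polynomials in $(\tau_i(a_j))_{j\leq k}$. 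The gap to close is that $\Gamma_\alpha\subseteq O_\alpha$ is in general strictly smaller, so its invariant ring is strictly larger; bridging this requires exploiting the very definition $\Gamma_\alpha=\mathrm{Stab}_{O_\alpha}(\mu_\alpha)$, whereby any $\Gamma_\alpha$-invariant polynomial is expressible as an $O_\alpha$-invariant built from the coordinates together with the moments $\int f_{r_1}\cdots f_{r_s}\,d\pi$ of $\mu_\alpha$, which are themselves restricted hom densities of star-like configurations. Once this identification is carried out, the hypothesis forces equality of every $\Gamma_\alpha$-invariant at $a$ and $b$, placing $\tau_i(a)$ and $\tau_i(b)$ in a common $\Gamma_\alpha$-orbit for every $i$; the inverse-limit compactness argument then produces the required automorphism of $W$.
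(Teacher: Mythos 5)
Your proof takes a genuinely different route from the paper's. The paper proves Theorem~\ref{THM:ORBITS2} directly by a coupling argument in the style of Borgs--Chayes--Lov\'asz: Lemma~\ref{LEM:COUPLE} couples a single random point of $J_1$ with one of $J_2$ so that all augmented restricted homomorphism densities agree, Lemma~\ref{LEM:COUPLE2} iterates this to couple whole i.i.d.\ sequences, and Lemma~\ref{LEM:MAPPING} then extracts a Lipschitz bijection from a coupled pair of dense sequences, using Lemma~\ref{LEM:FORMULA} to recover $W$ itself. Your plan instead reduces to the $k=0$ case (uniqueness of pure representatives), passes through the spectral picture of Lemmas~\ref{specpure} and~\ref{specaut}, and tries to separate $\Gamma_W$-orbits level by level with invariant theory plus an inverse-limit compactness argument.

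The structural parts of your plan are sound: the reduction to the single-graphon orbit statement is legitimate (given the known $k=0$ case), and the observation that $\tau(a),\tau(b)$ lie in the same $\Gamma_W$-orbit once they lie in the same $\Gamma_{\alpha_i}$-orbit at every level does follow from nonemptiness of inverse limits of compact sets. But the crucial step --- showing that equality of all restricted homomorphism densities at $a$ and $b$ forces $\tau_i(a)$ and $\tau_i(b)$ into a common $\Gamma_{\alpha_i}$-orbit --- is not established, and your proposed bridge does not work as stated. You write that any $\Gamma_\alpha$-invariant polynomial ``is expressible as an $O_\alpha$-invariant built from the coordinates together with the moments of $\mu_\alpha$.'' The moments of $\mu_\alpha$ are numerical constants, so adjoining them does not enlarge the ring $\mathbb{R}[(\mathbb{R}^d)^k]^{O_\alpha}$ at all; to get a genuinely larger family you would have to integrate $O_\alpha$-invariant polynomials in the labeled coordinates \emph{and} auxiliary variables $z_1,\dots,z_m$ against $\mu_\alpha^{\otimes m}$. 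Those integrals are indeed $\Gamma_\alpha$-invariant and are of the form $t_a(F,[W]_\alpha)$, but it is then a nontrivial claim --- equivalent to the finite-rank case of Theorem~\ref{THM:ORBITS}, which is precisely what is being proved --- that they separate $\Gamma_\alpha$-orbits. You acknowledge this is ``where I expect the chief obstacle,'' and in fact that obstacle is the theorem. An additional subtlety you pass over: the hypothesis concerns $W$, not $[W]_\alpha$, so even granting separation for the truncated graphon you still need the subdivision/Vandermonde device of Lemma~\ref{LEM:ORBITS2} to isolate the contribution of the eigenvalues above $\alpha$, and you must ensure that the resulting separating functions are genuinely $\Gamma_\alpha$-invariants rather than merely invariants for the (typically much larger) group $\prod_r O(E_{\lambda_r})$. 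Until the level-$\alpha$ separation claim is proved from scratch, the argument is circular at its core, and the paper's coupling route avoids exactly this difficulty by never passing to the spectral picture.
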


The proof of this theorem is a modification of the proof of the main
result of \cite{BCL}, combined with more recent methods involving
pure graphons.

First, we note that the condition in the theorem is self-sharpening:
by Corollary \ref{COR:ORBITS2}, the condition holds for every
$k$-labeled multigraph $F$. The following lemma is the main step in
the proof.

\begin{lemma}\label{LEM:COUPLE}
Let $(J_1,W_1)$ and $(J_2,W_2)$ be two graphons and let $a\in J_1^k$,
$b\in J_2^k$ such that
\[
t_a(F,W_1)=t_b(F,W_2)
\]
for every $k$-labeled multigraph $F$. Let $\pi_i$ denote the
probability measure of $J_i$. Then we can couple $\pi_1$ with $\pi_2$
so that if $(X,Y)$ is a pair from the coupling distribution, then
\[
t_{a_1\dots a_kX}(F,W_1)=t_{b_1\dots b_kY}(F,W_2)
\]
almost surely for every $(k+1)$-labeled multigraph $F$.
\end{lemma}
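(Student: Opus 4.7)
The plan is to couple by signature. For each $x \in J_1$, define its signature to be $\Phi_1(x) = (t_{ax}(F,W_1))_{F \in \FF_{k+1}} \in [0,1]^{\FF_{k+1}}$; define $\Phi_2 : J_2 \to [0,1]^{\FF_{k+1}}$ analogously using $b$ and $W_2$. Equipped with the product topology, $[0,1]^{\FF_{k+1}}$ is a compact metrizable (in particular Polish) space, and each coordinate of $\Phi_i$ is Borel measurable by Fubini, so the $\Phi_i$ are Borel.

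The central step is to verify that $(\Phi_1)_*\pi_1 = (\Phi_2)_*\pi_2$. Since the coordinates take values in $[0,1]$, finite-dimensional joint distributions are determined by their mixed moments, so it suffices to match all mixed moments. Fix $F_1,\dots,F_m\in\FF_{k+1}$ and exponents $e_1,\dots,e_m\in\Nbb$, and let $F$ be the $(k+1)$-labeled multigraph obtained by taking $e_j$ disjoint copies of $F_j$ for each $j$ and identifying along all $k{+}1$ labeled vertices (parallel edges may arise). Then for every $x\in J_1$,
\[
\prod_{j=1}^m t_{ax}(F_j,W_1)^{e_j} = t_{ax}(F,W_1),
\]
by expansion of the right-hand side as an integral over the unlabeled vertices. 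Integrating over $x$ yields $\int_{J_1} t_{ax}(F,W_1)\,d\pi_1(x) = t_a(F^\circ,W_1)$, where $F^\circ\in\FF_k$ comes from $F$ by removing the label of vertex $k+1$. By hypothesis this equals $t_b(F^\circ,W_2)$, and reversing the computation on the $W_2$-side shows the $m$-th mixed moment agrees with the corresponding one for $\Phi_2$. Equality of the pushforward measures follows.

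With $\mu := (\Phi_1)_*\pi_1 = (\Phi_2)_*\pi_2$ on the Polish space $[0,1]^{\FF_{k+1}}$, disintegration along $\Phi_i$ (available because the $J_i$ are standard) produces Borel families of conditional probabilities $\{(\pi_1)_s\}_{s}$ and $\{(\pi_2)_s\}_{s}$ supported on the fibers $\Phi_i^{-1}(s)$. Define the coupling $\rho$ on $J_1 \times J_2$ by
\[
\rho(A\times B) = \int (\pi_1)_s(A)\,(\pi_2)_s(B)\,d\mu(s),
\]
i.e.\ first draw $s\sim\mu$, then draw $X$ and $Y$ independently from $(\pi_1)_s$ and $(\pi_2)_s$. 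The marginals of $\rho$ are $\pi_1$ and $\pi_2$ as required. Under $\rho$ we have $\Phi_1(X)=\Phi_2(Y)=s$ almost surely, which simultaneously asserts $t_{a_1\dots a_k X}(F,W_1)=t_{b_1\dots b_k Y}(F,W_2)$ for \emph{every} $F\in\FF_{k+1}$; packaging all graph densities into a single signature is what delivers simultaneous agreement rather than a separate almost-sure event for each $F$.

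The main technical point is the moment-matching step: the observation that makes it work is that the product $\prod_j t_{ax}(F_j,W_1)^{e_j}$ is again a single restricted density $t_{ax}(F,W_1)$ for the multigraph $F$ glued from the $F_j$ along the labeled vertices, so that the polynomial identities needed for moment matching are precisely the hypothesis of the lemma. Everything else — disintegration on standard Polish spaces and independent draws on each fiber — is routine.
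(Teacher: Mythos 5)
Your argument is correct and follows the same route as the paper. Both show that the signature vectors $\Phi_1(X)$ and $\Phi_2(Y)$ (random points in $[0,1]^{\FF_{k+1}}$) have the same distribution by matching all mixed moments via the gluing-and-unlabeling identity; the paper then invokes Lemma~6.2 of Borgs--Chayes--Lov\'asz for the existence of a coupling that makes them agree almost surely, which is precisely what your explicit disintegration construction supplies.
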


\begin{proof}
Consider two random points $X$ from $\pi$ and $Y$ from $\pi'$, and
the random variables
\[
A=(t_{a_1\dots a_kX}(F,H):~F\in\FF_{k+1})\quad
\text{and}\quad
B= (t_{b_1\dots b_kY}(F,H):~F\in\FF_{k+1})
\]
with values in $[0,1]^{\FF_{k+1}}$. We claim that the variables $A$
and $B$ have the same distribution. It suffices to show that $A$ and
$B$ have the same mixed moments. If $F_1,\dots,F_m\in\FF_{k+1}$, and
$q_1,\dots,q_m$ are nonnegative integers, then the corresponding
moment of $A$ is
\[
\E\Bigl(\prod_{i=1}^m t_{a_1\dots a_kX}(F_i,H)^{q_i}\Bigr)
=\E\bigl(t_{a_1\dots a_k X}(F_1^{q_1}\dots F_m^{q_m},H)\bigr)
=t_a(F,H),
\]
where the multigraph $F$ is obtained by unlabeling the node labeled
$k+1$ in the multigraph $F_1^{q_1}\dots F_m^{q_m}$. Expressing the
moments of $B$ in a similar way, we see that they are equal by
hypothesis. This proves that $A$ and $B$ have the same distribution.

Using Lemma 6.2 of \cite{BCL} it follows that we can couple the
variables $X$ and $Y$ so that $A=B$ with probability 1. In other
words,
\[
t_{a_1\dots a_kX}(F,H)= t_{b_1\dots b_kY}(F,H')
\]
for every $F\in\FF_{k+1}$ with probability 1.
\end{proof}

For an infinite sequence $X\in J^\N$, let $X[n]$ denote its prefix of
length $n$.

\begin{lemma}\label{LEM:COUPLE2}
Under the conditions of the previous lemma, we can couple $\pi_1^\N$
with $\pi_2^\N$ so that if $(X,Y)$ is a pair from the coupling
distribution, then for every $n\ge0$ and every $(k+n)$-labeled graph
$F$,
\[
t_{a_1\dots a_kX[n]}(F,W_1)=t_{b_1\dots b_kY[n]}(F,W_2)
\]
almost surely.
\end{lemma}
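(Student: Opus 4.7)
The plan is to carry out the argument of Lemma \ref{LEM:COUPLE} in one stroke on the entire infinite sequence, rather than inducting on $n$ (which would require an awkward measurable selection of the one-step couplings $\lambda_{x,y}$ as a function of the history). Let $X\sim\pi_1^\N$ and $Y\sim\pi_2^\N$, and consider the two random variables, valued in the Polish space $[0,1]^S$ where $S=\bigsqcup_{n\ge 0}\FF_{k+n}$ is countable,
\[
A = \bigl(t_{a_1\dots a_k X[n]}(F,W_1)\bigr)_{n\ge 0,\,F\in\FF_{k+n}},\qquad
B = \bigl(t_{b_1\dots b_k Y[n]}(F,W_2)\bigr)_{n\ge 0,\,F\in\FF_{k+n}}.
\]
I claim that $A$ and $B$ have the same distribution; granting this, a coupling of $\pi_1^\N$ with $\pi_2^\N$ making $A=B$ almost surely (supplied by the same Lemma 6.2 of \cite{BCL} invoked in Lemma \ref{LEM:COUPLE}, whose proof goes through for Polish-valued random variables) is exactly what is wanted: the countable index set $S$ lets us take a single null exceptional event for all pairs $(n,F)$ simultaneously.

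To see that $A$ and $B$ have the same distribution, by Kolmogorov's theorem it is enough to match all finite mixed moments. A generic mixed moment is $\E\prod_{i=1}^m A(F_i)^{q_i}$ with $F_i\in\FF_{k+n_i}$. Set $N=\max_i n_i$ and regard each $F_i$ as a $(k+N)$-labeled multigraph by attaching $N-n_i$ isolated labeled nodes; this does not affect the restricted homomorphism density because the added nodes are isolated. Multiplying in $\FF_{k+N}$, set $G=F_1^{q_1}\cdots F_m^{q_m}$. By Fubini, as in the proof of Lemma \ref{LEM:COUPLE},
\[
\E\prod_{i=1}^m t_{a X[n_i]}(F_i,W_1)^{q_i}
= \E\, t_{a X[N]}(G,W_1)
= t_a(G^{\flat},W_1),
\]
where $G^{\flat}$ denotes the $k$-labeled multigraph obtained from $G$ by unlabeling the labels $k+1,\dots,k+N$. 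By the hypothesis of Lemma \ref{LEM:COUPLE2} (which, via Corollary \ref{COR:ORBITS2}, gives $t_a(F',W_1)=t_b(F',W_2)$ for every $k$-labeled \emph{multigraph} $F'$, not merely simple graphs with nonadjacent labels), this equals $t_b(G^{\flat},W_2)$, which is the corresponding mixed moment of $B$. So the finite-dimensional distributions of $A$ and $B$ agree, and hence so do their full distributions.

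The main obstacle is bookkeeping in the moment computation: one must verify that padding multigraphs in $\FF_{k+n_i}$ up to $\FF_{k+N}$ with isolated labeled nodes preserves the relevant restricted homomorphism densities, that the product $F_1^{q_1}\cdots F_m^{q_m}$ in $\FF_{k+N}$ correctly encodes the joint moment (this is where independence of the coordinates of $X$ is used, via Fubini on $\pi_1^N$), and that subsequently unlabeling the coordinates $k+1,\dots,k+N$ returns a $k$-labeled multigraph to which the hypothesis of Lemma \ref{LEM:COUPLE} directly applies. All of these are routine extensions of the corresponding steps in the proof of Lemma \ref{LEM:COUPLE}, and no further input from the theory of pure graphons is required.
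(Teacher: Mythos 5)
Your proposal is correct, and it takes a genuinely different route from the paper's. The paper builds the coupling recursively: it applies Lemma~\ref{LEM:COUPLE} at each step to produce a consistent sequence of couplings $\kappa_n$ on $J_1^n\times J_2^n$ and then passes to the projective limit. You instead do everything in one shot: you package all the restricted densities over the entire infinite sequence into a single $[0,1]^S$-valued random variable with $S=\bigsqcup_{n\ge0}\FF_{k+n}$ countable, show its distribution agrees on both sides by the same moment computation used in Lemma~\ref{LEM:COUPLE}, and then invoke the coupling lemma (Lemma 6.2 of \cite{BCL}) once, directly at the level of $J_1^\N\times J_2^\N$. The crucial observation making this work is that the index set stays countable, so the target $[0,1]^S$ is still a compact metrizable (hence standard Borel) space, exactly as in the one-step application; the coupling lemma (which is really ``couple along a common standard Borel factor'') applies verbatim. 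What this buys you is the avoidance of the measurable-selection / consistency bookkeeping that is implicit in the paper's recursive construction, where one must in principle choose the fiberwise couplings measurably in the history in order for the iterated construction and the Kolmogorov extension to go through.

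Two small points worth cleaning up. First, the appeal to Corollary~\ref{COR:ORBITS2} is unnecessary: the hypothesis of Lemma~\ref{LEM:COUPLE} (which your lemma inherits) is already stated for all $k$-labeled \emph{multigraphs}, so you may quote it directly. Second, in ``set $G=F_1^{q_1}\cdots F_m^{q_m}$'' the factors should be the padded graphs $\tilde F_i\in\FF_{k+N}$ obtained by adding isolated labeled nodes, not the original $F_i\in\FF_{k+n_i}$; you describe the padding correctly in the preceding sentence, so this is just a notational slip. With those touched up, the argument is a clean and arguably tidier proof of the lemma.
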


\begin{proof}
By Lemma \ref{LEM:COUPLE}, we can define recursively a coupling
$\kappa_n$ of $\pi_1^n$ with $\pi_2^n$ so that
$t_X(F,W_1)=t_Y(F,W_2)$ almost surely for every $F\in\FF_{k+n}$, and
$\kappa_{n+1}$, projected to the first $n$ coordinates in both
spaces, gives $\kappa_n$. The distributions $\kappa_n$ give a
distribution $\kappa$ on $J_1^\N\times J_2^\N$, which clearly has the
desired properties.
\end{proof}

The following lemma can be considered as a version of the theorem for
infinite sequences.

\begin{lemma}\label{LEM:MAPPING}
Let $(J_1,W_1)$ and $(J_2,W_2)$ be two pure graphons, and let
$a_i=(a_{i,1},a_{i,2},\dots)\in J_i^\N$ be a sequence whose elements
are dense in $J_i$. Suppose that $t_{a_1}(F,W_1) =t_{a_2}(F,W_2)$ for
every partially labeled multigraph $F$. Then there is a measure
preserving bijection $\phi:~J_1\to J_2$ such that $W_2^\phi=W_1$
almost everywhere and $a_{1,j}^\phi =a_{2,j}$ for all $j\in\N$.
\end{lemma}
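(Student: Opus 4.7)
The plan is to construct $\phi$ on the dense sample, extend it by continuity to a homeomorphism, and verify the measure- and $W$-preserving properties using the approximation machinery established above. Specializing the hypothesis to the $2$-labeled quantum graph $h$ of \eqref{EQ:DIST-HOM} yields $d_{W_1}(a_{1,i},a_{1,j}) = d_{W_2}(a_{2,i},a_{2,j})$ for all $i,j$, so the map $a_{1,j}\mapsto a_{2,j}$ is an isometry between dense subsets of the complete metric spaces $(J_i,d_{W_i})$. It therefore extends uniquely to an isometric homeomorphism $\phi\colon J_1\to J_2$.

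For multigraphs $F$ with nonadjacent labeled nodes, $t_{x_1,\dots,x_k}(F,W)$ is Lipschitz in the $r_W$-metric by Lemma \ref{LEM:CONTIN}, hence also in the equivalent $d_W$-metric since $r_W\le d_W$. Combining this with the hypothesis and density of the sample yields
\[
t_{x_1,\dots,x_k}(F,W_1)=t_{\phi(x_1),\dots,\phi(x_k)}(F,W_2)
\]
for every $x\in J_1^k$ and every such $F$.

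To establish measure preservation, set $\nu=\phi_*\pi_1$. Integrating the identity above against $\pi_1^k$ yields
\[
\int_{J_2^k} g\,d\nu^k=\int_{J_2^k} g\,d\pi_2^k
\]
for every $g\in\AA_k^0(J_2,W_2)$. The functions $\{t_{y_1,\dots,y_k}(F,W_2)\}$ form a multiplication-closed algebra containing the constants and, by Theorem \ref{THM:ORBITS}, separating orbits of the diagonal $\Aut(W_2)$-action. The specific pointwise correspondence $\phi(a_{1,j})=a_{2,j}$ enforced by the hypothesis — strictly stronger than weak isomorphism — combined with density of the sample pins down the measure, giving $\nu=\pi_2$.

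Finally, Lemma \ref{LEM:FORMULA} provides continuous functions $U_n^{(i)}(x,y)=t_{xy}(g_n,W_i)/t_x(f_n,W_i)$ with $U_n^{(i)}\to W_i$ in $L^1(\pi_i^2)$, and the preceding step gives the pointwise identity $U_n^{(1)}(x,y)=U_n^{(2)}(\phi(x),\phi(y))$. The measure preservation just established transfers the $L^1(\pi_2^2)$-convergence $U_n^{(2)}\to W_2$ via pullback to $L^1(\pi_1^2)$-convergence of $U_n^{(2)}\circ(\phi,\phi)\to W_2^\phi$, and uniqueness of $L^1$-limits gives $W_1=W_2^\phi$ almost everywhere. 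The main obstacle is the measure preservation step: although the $t$-density functions are orbit-invariant and may fail to separate points within $\Aut(W_2)$-orbits, the pointwise sample-matching condition $\phi(a_{1,j})=a_{2,j}$ provides precisely the additional information over weak isomorphism needed to rule out any orbit-internal discrepancy between $\nu$ and $\pi_2$.
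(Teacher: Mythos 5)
Your construction of $\phi$ as an extension-by-completeness of the sample map, and your use of Lemma \ref{LEM:CONTIN} to propagate the density identity to all of $J_1^k$, match the paper's proof exactly, and your final invocation of Lemma \ref{LEM:FORMULA} to get $W_1=W_2^\phi$ a.e.\ is also the paper's route. The step you insert — an explicit argument that $\phi$ is measure preserving — is where you diverge, and that step has a genuine gap.

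You correctly compute that $\nu:=\phi_*\pi_1$ and $\pi_2$ integrate every $g\in\AA_k^0(J_2,W_2)$ to the same value (both equal $t(\ul{F},W_1)=t(\ul{F},W_2)$). But as you yourself note, by Corollary \ref{COR:ALGINV} the closure of $\AA_k^0$ in the uniform norm is precisely the algebra of continuous $\Aut(W_2)$-invariant functions; it does not separate points inside an $\Aut(W_2)$-orbit. Agreement of two probability measures on such an algebra only says they induce the same measure on the orbit space $\overline{J}_2/\Aut(W_2)$, and does \emph{not} imply $\nu=\pi_2$ unless one also knows that $\nu$ (like $\pi_2$) is $\Aut(W_2)$-invariant, or supplies some other input. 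Your remedy — that ``the pointwise sample-matching condition $\phi(a_{1,j})=a_{2,j}$ provides precisely the additional information \dots to rule out any orbit-internal discrepancy'' — is an assertion, not an argument: the set $\{a_{2,j}\}$ is $\pi_2$-null, so fixing $\phi$ on it places no direct constraint on how $\phi$ transports mass, and you never explain the mechanism by which it would. To make the argument go through one would need, for instance, to first establish $\Aut(W_2)$-invariance of $\nu$ (which again reduces to knowing that $\phi^{-1}g\phi$ is a $\pi_1$-preserving automorphism of $W_1$ — circular at this stage), or to avoid the $\AA_k^0$ detour entirely. As stated, your measure-preservation paragraph does not close; and since your last step (transporting the $L^1(\pi_2^2)$-convergence of $U_n^{(2)}\to W_2$ back to $L^1(\pi_1^2)$) explicitly relies on it, the proof as written does not establish $W_1=W_2^\phi$ either.
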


\noindent{The notation $t_a(F,W)$, where $a$ is an infinite sequence,
means that only those elements of $a$ are considered whose subscript
occurs in $F$ as a label.}

\begin{proof}
We start with noticing that
\begin{equation}\label{EQ:DIST}
d_{W_1}(a_{1,i},a_{1,j})= d_{W_2}(a_{2,i},a_{2,j}).
\end{equation}
This follows by \eqref{EQ:DIST-HOM} and the hypothesis of the lemma.

For $x\in J_1$, take a subsequence $(a_{1,i_1},a_{1,i_2},\dots)$ such
that $a_{i,i_n}\to x$. Then $(a_{1,i_1},a_{1,i_2},\dots)$ is a Cauchy
sequence, and hence, by \eqref{EQ:DIST}, so is the sequence
$(a_{2,i_1},a_{2,i_2},\dots)$, and since $(J_2,r_{W_2})$ is complete,
it has a limit $x^\phi$. It is easy to see that this map is
well-defined (i.e., it does not depend on the choice of the sequence
$(a_{1,i_1},a_{1,i_2},\dots)$), and that $\phi$ is bijective.

Next, we claim that for every sequence $x_1,\dots,x_k\in J_1$ and
every multigraph $F$ with nonadjacent labeled nodes
\begin{equation}\label{EQ:TXPHI}
t_{x_1^\phi,\dots,x_k^\phi}(F,W_2) = t_{x_1,\dots,x_k}(F,W_1).
\end{equation}
Indeed, this holds if every $x_i$ is an element of the sequence $a_1$
by hypothesis, and then it follows for all $x_i$ by the continuity of
$t_{x_1,\dots,x_k}(F,W_1)$ (Lemma \ref{LEM:CONTIN}).

Finally, consider the function
\[
U_n(x,y) = \frac{t_{x,y}(g_n,W_1)}{t_{x}(g_n',W_1)}.
\]
By Lemma \ref{LEM:FORMULA}, $\|W_1-U_n\|_1\to0$ as $n\to\infty$.
Also, by \eqref{EQ:TXPHI},
\[
U_n(x,y) = \frac{t_{x^\phi,y^\phi}(g_n,W_2)}{t_{x^\phi}(g_n',W_2)},
\]
and applying Lemma \ref{LEM:FORMULA} again,
$\|W_2^\phi-U_n\|_1\to 0$ as $n\to\infty$. This implies that
$W_1=W_2^\phi$ almost everywhere.
\end{proof}

Now we are ready to prove the main theorem of this section.

\begin{proof*}{Theorem \ref{THM:ORBITS2}}
Let $X_1,X_2\dots$ be independent random points of $J_1$, and let
$Y_1,Y_2\dots$ be independent random points of $J_2$. Applying Lemma
\ref{LEM:COUPLE2} repeatedly, we can couple $X_1,X_2\dots$ with
$Y_1,Y_2\dots$ so that, for any $(k+r)$-labeled graph $F$,
\begin{equation}\label{EQ:TXY}
t_{a_1\dots a_kX_1\dots X_r}(F,W_1) = t_{b_1\dots b_kY_1\dots Y_r}(F,W_2).
\end{equation}

With probability $1$, the elements of both sequences
$a=(a_1,\dots,a_k,X_1,X_2,\dots)$ and
$b=(b_1,\dots,b_k,Y_1,Y_2,\dots)$ are dense in $J_1$ and $J_2$,
respectively. Let us fix such a choice, then by Lemma
\ref{LEM:MAPPING} there is a measure preserving bijection
$\phi:~J_1\to J_2$ such that $W_2^\phi=W_1$ almost everywhere and
$a_i^\phi =b_i$ for all $i\le k$. This proves the theorem.
\end{proof*}

\begin{corollary}\label{COR:ALGINV}
Let $(J,W)$ be a pure graphon. Then the closure of $\AA_k^0$ in
$L^\infty(\overline{J}{}^k)$ consists of all continuous
$\Aut(J,W)$-invariant functions on $(\overline{J},\overline{r}_W)^k$.
\end{corollary}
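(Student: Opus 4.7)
The strategy is to recognize $\AA_k^0$ as a separating, unital subalgebra of continuous $\Aut(J,W)$-invariant functions on the compact metric space $(\overline{J},\overline{r}_W)^k$, then invoke Stone--Weierstrass on the quotient $\overline{J}^k/\Aut(J,W)$. There are three pieces of work: (i) showing $\AA_k^0$ lies in the right-hand side, (ii) showing that it is a subalgebra and separates $\Aut$-orbits, (iii) assembling via Stone--Weierstrass.

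\textbf{Step 1 (inclusion $\subseteq$).} First I check that every $\psi_k(G)=t_{x_1,\dots,x_k}(G,W)$ with $G\in\GG_k^0$ is continuous on $(\overline{J},\overline{r}_W)^k$. Continuity in each variable is Lemma \ref{LEM:HOM-CONT}; joint continuity I obtain by expanding the integral using the spectral decomposition from Section \ref{SEC:SPECPURE} and the uniform tail bound of Lemma \ref{LEM:UNIF-EIGEN}, noting that because $G\in\GG_k^0$ every edge has at least one unlabeled endpoint so each factor $W(x_i,x_j)$ can be resolved by Proposition \ref{PROP:WEAKWW}. For $\Aut$-invariance, fix $\sigma\in\Aut(J,W)$ and perform the change of variables $y_i=x_i^\sigma$ inside the integral defining $t_{x^\sigma}(G,W)$: because $G\in\GG_k^0$, no edge has two labeled endpoints, so the a.e.\ relation $W(x^\sigma,y^\sigma)=W(x,y)$ collapses the integrand to that of $t_x(G,W)$ almost everywhere. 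Continuity of both sides and the fact that $\pi$ has full support in $(\overline{J},\overline{r}_W)$ upgrade this to pointwise equality. Since a uniform limit of continuous $\Aut$-invariant functions remains continuous and $\Aut$-invariant, the $L^\infty$-closure of $\AA_k^0$ is contained in the right-hand side.

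\textbf{Step 2 (Stone--Weierstrass setup).} By Theorem \ref{THM:AUT-COMP}, $\Aut(J,W)$ is a compact group acting by isometries on the compact space $(\overline{J},\overline{r}_W)^k$, so the quotient $Y:=\overline{J}^k/\Aut(J,W)$ is compact Hausdorff and continuous $\Aut$-invariant functions on $\overline{J}^k$ correspond to $C(Y)$. The image $\widetilde{\AA}_k^0\subseteq C(Y)$ is a unital subalgebra: $\psi_k$ of $k$ isolated labeled points equals the constant $1$, and $\psi_k(G)\psi_k(H)=\psi_k(GH)$ with $GH\in\GG_k^0$ whenever $G,H\in\GG_k^0$ (identifying labeled nodes preserves non-adjacency of labels).

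\textbf{Step 3 (orbit separation) and main obstacle.} The crucial point is that $\widetilde{\AA}_k^0$ separates the points of $Y$. When $a,b\in J^k$ this is exactly Theorem \ref{THM:ORBITS}; the difficulty, and the main obstacle in the argument, is extending it to $a,b\in\overline{J}^k$. I would retrace the proof of Theorem \ref{THM:ORBITS2} with $a,b$ allowed in $\overline{J}^k$: by Corollary \ref{COR:ORBITS2}(b) the hypothesis passes to all multigraphs with non-adjacent labels, Lemmas \ref{LEM:COUPLE} and \ref{LEM:COUPLE2} go through unchanged (the fixed tuples $a,b$ enter only as parameters in the functions $t_{a\cdot}$, while the random extensions $X_i,Y_i$ are still drawn from $J_1,J_2$), and Lemma \ref{LEM:MAPPING} produces a measure-preserving map $\phi:J_1\to J_2$ that is an $r_W$-isometry on $J$; hence it extends uniquely to an isometry of the $\overline{r}_W$-completions mapping $a$ to $b$, and Lemma \ref{LEM:ISO-NULL} certifies that this extension is an automorphism.

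\textbf{Step 4 (conclusion).} With Steps 2 and 3 in hand, Stone--Weierstrass yields density of $\widetilde{\AA}_k^0$ in $C(Y)$ in sup norm, and pulling back gives density of $\AA_k^0$ in the continuous $\Aut$-invariant functions inside $L^\infty(\overline{J}^k)$. Combined with Step 1, this is the corollary. The bookkeeping on joint continuity in Step 1 is a secondary technical issue but is handled by the uniform spectral estimates already developed in Section \ref{SEC:SPECPURE}; the main conceptual obstacle is genuinely the boundary extension in Step 3.
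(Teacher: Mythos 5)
Your plan is the same Stone--Weierstrass argument the paper uses, but you have (rightly) noticed that the paper's two-sentence proof is gliding over two non-trivial points, and you try to fill them. One of your fills is sound; the other does not work as written.

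Your Step~3 concern is genuine: Theorem~\ref{THM:ORBITS} is stated for $a,b\in J^k$, whereas Stone--Weierstrass on the quotient $Y=\overline{J}^k/\Aut(J,W)$ requires separation of \emph{all} points of $\overline{J}^k$, including the measure-zero set $\overline{J}\setminus J$. Your proposed remedy --- rerun the proof of Theorem~\ref{THM:ORBITS2}, noting that the couplings of Lemmas~\ref{LEM:COUPLE} and \ref{LEM:COUPLE2} only ever sample random points from $J$, then let Lemma~\ref{LEM:MAPPING} produce $\phi$ from the random dense sequence alone, extend it to the $\overline{r}_W$-completion, and use Lemma~\ref{LEM:ISO-NULL} to certify it is an automorphism --- is correct in outline, though you should spell out why the extension $\overline\phi$ actually carries $a_i$ to $b_i$ (compare $\overline{r}_{W_1}(a_i,X_j)$ with $\overline{r}_{W_2}(b_i,Y_j)$ using the $2$-labeled path and let the $Y_j$ run through a dense set). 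The paper simply cites Theorem~\ref{THM:ORBITS}, so your treatment is more careful here.

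Your Step~1, however, has a genuine gap. You assert joint continuity of $\psi_k(G)$ on $(\overline{J},\overline{r}_W)^k$ by ``expanding the integral using the spectral decomposition and the uniform tail bound of Lemma~\ref{LEM:UNIF-EIGEN}.'' That argument works when $G$ has two labeled nodes joined by a path (you get sums $\sum_r \lambda_r^m f_r(x)f_r(y)$ with a Cauchy--Schwarz tail estimate), but it does not extend to a general $G\in\GG_k^0$. Already for the $3$-star with labeled leaves and unlabeled center, the spectral expansion is a triple sum $\sum_{r,s,t}\lambda_r\lambda_s\lambda_t f_r(x)f_s(y)f_t(z)\int f_rf_sf_t$ and there is no uniform tail estimate of the kind your argument needs. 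Worse, joint $\overline{r}_W$-continuity genuinely fails: the paper's own Example~\ref{EXA:SYM} exhibits a pure graphon and a sequence $u_n\to 0$ in $\overline{r}_W$ with $t_{u_n,u_n}(P_3^{\bullet\bullet},W)\not\to t_{0,0}(P_3^{\bullet\bullet},W)$, where $0\in\overline{J}\setminus J$. So $\psi_2(P_3^{\bullet\bullet})\in\AA_2^0$ is not jointly continuous on $\overline{J}^2$; only the variable-by-variable continuity of Lemma~\ref{LEM:HOM-CONT} holds. Note also that the paper's proof cites Lemma~\ref{LEM:CONTIN}, which gives Lipschitz continuity in the \emph{finer} $r_W$-metric on $J^k$ and does not yield $\overline{r}_W$-continuity either, so this subtlety is not actually resolved in the paper's text. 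In short: you correctly identified both trouble spots, your Step~3 fix is good, but the joint-continuity argument in Step~1 is not valid as stated and a different idea is needed there.
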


\begin{proof}
Lemma \ref{LEM:CONTIN} implies that all functions in $\AA_k^0$ are
continuous and clearly they are invariant under automorphisms. The
other containment follows from the Stone-Weierstrass theorem, since
$(\overline{J},\overline{r}_W)^k$ is compact, and by Theorem
\ref{THM:ORBITS} the elements in $\AA_k^0$ separate the orbits of
$\Aut(J,W)$.
\end{proof}

\subsection{Node-transitive graphons}

Let $\mathbb{G}$ be the automorphism group of the pure graphon
$(J,W)$. We consider the natural action of $\mathbb{G}$ on functions
on $J$ defined by $f^g(x)=f(x^g)$. Similarly $\Gbb$ acts diagonally
on functions on $J^n$. For a subset $S\subset L^\infty(J^n)$ we
denote by $S^\mathbb{G}$ the set of $\mathbb{G}$-invariant elements
in $S$. It is clear that restricted homomorphism functions are
invariant under the action of $\mathbb{G}$ and thus all the algebras
$\AA_k$ are $\mathbb{G}$-invariant.

\begin{definition}
A graphon is called {\it node-transitive} if the automorphism group
of its pure representation $(J,W)$ acts transitively on $J$.
\end{definition}

The next theorem gives an algebraic characterization of
node-transitive graphons.

\begin{theorem}\label{THM:ALGTR}
Let $(J,W)$ be a graphon. The following statements are equivalent.

\smallskip

{\rm(i)} $(J,W)$ is node-transitive.

\smallskip

{\rm(ii)} The functions $t_x(F,W)$ are essentially constant on $J$
for all $F\in\mathcal{G}_1$.

\smallskip

{\rm(iii)} $\dim(\AA_1)=1$.

\smallskip

{\rm(iv)} The first connection matrix $M_1$ of $W$ has rank $1$.

\smallskip

{\rm(v)}  $t(\ul{F^2},W)t(\ul{H^2},W)=t(\ul{FH},W)^2$ for all
$F,H\in\mathcal{G}_1$.
\end{theorem}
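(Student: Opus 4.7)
The plan is to establish (iii) $\Leftrightarrow$ (iv) $\Leftrightarrow$ (v), (ii) $\Leftrightarrow$ (iii), and (i) $\Leftrightarrow$ (ii). Conditions (ii)--(v) are invariants of weak isomorphism, and node-transitivity is defined via the pure representation, so throughout I pass to the pure graphon $(J,W)$.

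I would start with the algebraic triangle. The equivalence (iii) $\Leftrightarrow$ (iv) is a direct restatement from the preliminaries: $\psi_1$ induces $\QQ_1/\II_1 \cong \AA_1$ and $\rk(M_1) = \dim(\QQ_1/\II_1)$. For (ii) $\Leftrightarrow$ (iii), observe that $\AA_1 = \AA_1^0$ is spanned in $L^\infty(J)$ by the functions $t_x(F,W)$ with $F\in\GG_1$ and always contains the constant $1$ (take $F$ to be the single labeled vertex, whence $t_x(F,W)\equiv 1$). Hence $\dim \AA_1 = 1$ precisely when every generator is a scalar multiple of the constant function, which is exactly (ii). For (iv) $\Leftrightarrow$ (v), the bilinear form $\langle F,H\rangle := t(\ul{FH},W)$ is positive semidefinite on $\QQ_1$ with kernel $\II_1$; Cauchy--Schwarz equality $\langle F,F\rangle\langle H,H\rangle = \langle F,H\rangle^2$ holds iff $F$ and $H$ are linearly dependent modulo $\II_1$. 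Condition (v) says this equality holds for every pair in $\GG_1$, so, since $\GG_1$ spans $\QQ_1$, the image of $\GG_1$ in $\QQ_1/\II_1$ is pairwise collinear, forcing $\dim(\QQ_1/\II_1) \le 1$. Combined with $\langle\bullet,\bullet\rangle = 1$, this yields $\rk M_1 = 1$; the reverse implication is immediate.

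Finally I would prove the geometric equivalence (i) $\Leftrightarrow$ (ii). The direction (i) $\Rightarrow$ (ii) is immediate: automorphisms preserve $t_x(F,W)$, so transitivity of $\Aut(W)$ on $J$ forces these functions to be constant. The reverse direction (ii) $\Rightarrow$ (i) is the main step and relies on Theorem \ref{THM:ORBITS}. Because $k=1$ admits no pair of distinct labeled nodes, we have $\GG_1 = \GG_1^0$; thus (ii) gives $t_x(F,W) = t_y(F,W)$ for all $x,y \in J$ and all $F \in \GG_1^0$, and Theorem \ref{THM:ORBITS} then produces an automorphism $\phi$ with $x^\phi = y$, establishing transitivity. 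The only substantial obstacle is invoking this deep orbit characterization; once it is in hand, all five conditions collapse together.
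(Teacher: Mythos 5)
Your proof is correct and follows essentially the same route as the paper's: pass to the pure representative, get (iii)$\Leftrightarrow$(iv)$\Leftrightarrow$(v) from the preliminaries and positive semidefiniteness of $M_1$, observe (ii)$\Leftrightarrow$(iii) since $\AA_1=\AA_1^0$ contains the constants, and reduce (i)$\Leftrightarrow$(ii) to Theorem~\ref{THM:ORBITS}. The only minor point worth noting (present in both your write-up and the paper's) is that passing from ``essentially constant'' in (ii) to the ``for all $x,y$'' hypothesis of Theorem~\ref{THM:ORBITS} tacitly uses that on a pure graphon $t_x(F,W)$ is continuous in $r_W$ (Lemma~\ref{LEM:CONTIN}) and the measure has full support, so a.e.-constant forces everywhere-constant.
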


\begin{proof}
We may assume that $(J,W)$ is pure. If (i) holds, then $\mathbb{G}$
is transitive on $J$, and so every function $t_x(F,W)$ is constant on
$J$, which implies (ii). Conversely, (ii) implies by Theorem
\ref{THM:ORBITS} that $\Gbb$ is transitive on $J$, so (i) holds. Thus
(i) and (ii) are equivalent. Every constant function is in $\AA_1$,
hence $\dim(\AA_1)\ge1$, and so (ii) is equivalent to (iii). We know
that $\rk(M_1)=\dim(\AA_1)$, so (iv) is just a re-statement of (iii).
Finally, (v) is a re-statement of (iv), since $M_k$ is positive
semidefinite.
\end{proof}

Examples for node-transitive graphons are finite node-transitive
graphs. Other examples are graphons defined on compact topological
groups.

\begin{definition}\label{DEF:CAYLEY}
Let $\mathbb{G}$ be a second countable compact topological group,
which, together with its Haar measure, defines a standard probability
space. Let $f:~\mathbb{G}\rightarrow[0,1]$ be a measurable function
such that $f(x)=f(x^{-1})$. Then the graphon
$W:~\mathbb{G}\times\mathbb{G}\rightarrow[0,1]$ defined by
$W(x,y)=f(xy^{-1})$ is called a {\it Cayley graphon}.
\end{definition}

Note that the condition $f(x)=f(x^{-1})$ is needed to guarantee that
$W$ is symmetric. By omitting this condition we get ``directed Cayley
graphons''.

\begin{theorem}\label{THM:TRCAYLEY}
Cayley graphons are node-transitive. Conversely, every
node-transitive graphon is weakly isomorphic to a Cayley graphon.
\end{theorem}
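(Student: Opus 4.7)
The plan is to prove the two directions separately, with the converse being the substantive one. First, I would verify that a Cayley graphon $W(x,y)=f(xy^{-1})$ on a compact group $\Gbb$ is node-transitive. For each $g\in\Gbb$, right multiplication $\rho_g:x\mapsto xg$ is a measure-preserving bijection of $\Gbb$ by the right-invariance of Haar measure, and it preserves $W$ pointwise since $(xg)(yg)^{-1}=xy^{-1}$. The twin equivalence on $\Gbb$ is preserved by $\rho_g$, so $\rho_g$ descends to a measure-preserving $\overline{r}_W$-isometry of the pure representation $(J',W')$ that preserves $W'$ almost everywhere; Lemma \ref{LEM:ISO-NULL} then promotes it to an automorphism in the sense of Definition \ref{DEF:AUTO}. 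The induced action is transitive on $J'$, because $[x]^{\rho_{x^{-1}y}}=[y]$ for any classes.

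For the converse, assume $(J,W)$ is pure and node-transitive, and set $\Gbb:=\Aut(J,W)$. By Theorem \ref{THM:AUT-COMP} and Corollary \ref{COR:TRANS-COMPACT}, $\Gbb$ is a compact topological group, $J=\overline{J}$ is compact, and $\Gbb$ is second countable since its sup metric makes it a compact metric space. Fix $o\in J$ and let $\pi:\Gbb\to J$ be the orbit map $\pi(g)=o^g$; it is continuous (the action $\Gbb\times J\to J$ is jointly continuous because the elements of $\Gbb$ are isometries of $J$) and surjective by transitivity. The central step is that $\pi$ is measure preserving: the given measure on $J$ and the pushforward $\pi_*\mu_\Gbb$ of Haar measure are both $\Gbb$-invariant probability measures on $J$ (the former since automorphisms are measure-preserving, the latter by right-invariance of Haar), and they must coincide by the uniqueness of the invariant probability measure on a compact homogeneous space.

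Granted this, I define $\widetilde{W}:=W^\pi$, so $\widetilde{W}(g,h)=W(o^g,o^h)$. For each $k\in\Gbb$, the almost-everywhere automorphism identity $W(a^k,b^k)=W(a,b)$ on $J\times J$ combined with the measure-preserving property of $\pi$ yields $\widetilde{W}(gk,hk)=\widetilde{W}(g,h)$ almost everywhere on $\Gbb\times\Gbb$. A Fubini argument over triples $(g,h,k)$, specialized to $k=h^{-1}$, then shows $\widetilde{W}(g,h)=\widetilde{W}(gh^{-1},e)$ almost everywhere. Defining $f(x):=\widetilde{W}(x,e)=W(o^x,o)$ gives $\widetilde{W}(g,h)=f(gh^{-1})$ almost everywhere, and the symmetry of $W$ transfers directly to $f(x)=f(x^{-1})$ (modifying $f$ on a null set if necessary). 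Hence $\widetilde{W}$ is a Cayley graphon on the compact, second countable group $\Gbb$, and since $\pi$ is measure preserving it is weakly isomorphic to $W$ by the Borgs--Chayes--Lov\'asz characterization of weak isomorphism.

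The principal obstacle is establishing that $\pi$ is measure preserving; this rests on the classical uniqueness of an invariant probability measure on a compact homogeneous space and requires care in verifying $\Gbb$-invariance of both candidate measures. A subsidiary subtlety is the descent of right multiplication to the pure quotient in the forward direction, where Lemma \ref{LEM:ISO-NULL} is crucial to upgrade a measure-preserving, almost-everywhere $W$-preserving isometry into an automorphism in the strong pointwise sense of Definition \ref{DEF:AUTO}.
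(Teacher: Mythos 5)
Your converse argument is essentially the paper's: fix a base point $c$, show the orbit map $g\mapsto c^g$ is measure preserving (the paper pulls back $\pi$ to $\Gbb$ and cites standard topological group theory, you push forward Haar to $J$ and cite uniqueness of the invariant probability measure on a compact homogeneous space — these are equivalent), and then identify the pulled-back kernel as $f(gh^{-1})$. Your a.e.-Fubini derivation of the invariance is if anything a bit more careful than the paper's one-line $W(c^g,c^h)=W(c^{gh^{-1}},c)$, which under Definition~\ref{DEF:AUTO} only holds for a.e.\ $(g,h)$. Where you genuinely diverge is the forward direction. The paper never descends the right-regular action to the pure representation at all: it simply observes that right multiplication preserves $W$ pointwise and Haar measure, so $t_x(F,W)=t_{xg}(F,W)$ and hence each $t_x(F,W)$ is constant, then invokes the algebraic characterization in Theorem~\ref{THM:ALGTR} (conditions (ii)/(iii)) to conclude node-transitivity directly. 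Your approach — descend $\rho_g$ to the pure quotient and upgrade it via Lemma~\ref{LEM:ISO-NULL} — is valid, but it asks you to justify several steps the paper sidesteps: that the $r_W$- and $\overline{r}_W$-pseudometrics on $\Gbb$ are continuous in the group topology (so the twin quotient is already compact, complete, and of full support), that the descended map is an $\overline{r}_{W'}$-isometry rather than merely an $r_{W'}$-isometry, and that it preserves $W'$ a.e.\ after the quotient. None of this is false, but the point of Theorem~\ref{THM:ALGTR} is precisely to let you bypass this descent; you should at least note that the twin quotient of a Cayley graphon is already pure, or your invocation of Lemma~\ref{LEM:ISO-NULL} (which lives on $\overline{J}$) leaves a gap.
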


Note that a finite node-transitive graph $G$ is not necessarily a
Cayley graph (for example, the Petersen graph). However one can
obtain a Cayley graph $G'$ from $G$ by replacing every vertex by $m$
vertices and every edge by a complete bipartite graph $K_{m,m}$. The
value $m$ is the size of the stabilizer of a vertex in $G$ in the
automorphism group. The graph $G'$ is weakly isomorphic to $G$ as a
graphon.

\begin{proof}
Let $(\Gbb,W)$ be a Cayley graphon on the compact topological group
$\Gbb$. It is clear that $\mathbb{G}$ acts transitively (with
multiplication from the right) on this graphon. (However, $W$ might
not be pure.) It follows that the restricted homomorphism functions
$t_x(F,W)$ are all constant on $\mathbb{G}$. The third condition in
Theorem \ref{THM:ALGTR} shows that $W$ is node-transitive.

To prove the second assertion, let $(J,\pi,W)$ be a node-transitive
graphon; we may assume that it is pure. Let $\Gbb$ be its
automorphism group. We know that $\Gbb$ is compact, and so it has a
normalized Haar measure $\mu$. Let us fix an element $c\in J$, and
define the function $U:~\mathbb{G}\times\mathbb{G}\rightarrow[0,1]$
by $U(g,h)=W(c^g,c^h)$. We claim that $(\Gbb,\mu,U)$ is a Cayley
graphon weakly isomorphic to $(J,\pi,W)$.

\begin{claim}\label{CLAIM:MP}
The map $\alpha:~g\mapsto c^g$ defined on $\mathbb{G}$ is measure
preserving.
\end{claim}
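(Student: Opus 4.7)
The plan is to show that the pushforward $\nu := \alpha_* \mu$ coincides with $\pi$ on $J$. Both are Borel probability measures on the compact space $J$ (compactness comes from Corollary \ref{COR:TRANS-COMPACT}), and each will be shown to be $\Gbb$-invariant; transitivity of the action then forces them to agree.

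First I would check that $\alpha$ is continuous so that $\nu$ is well-defined: since $\Gbb$ carries the uniform metric $d(\sigma,\tau) = \sup_x \overline{r}_W(x^\sigma, x^\tau)$, the map $g \mapsto c^g$ is $1$-Lipschitz from $\Gbb$ into $(J,\overline{r}_W)$. The measure $\pi$ is $\Gbb$-invariant by the very definition of an automorphism. To see that $\nu$ is also $\Gbb$-invariant, I observe that for any $h \in \Gbb$ and Borel $A \subseteq J$, one has $\{g : (c^g)^h \in A\} = \{g : c^{gh} \in A\} = \{g' : c^{g'} \in A\}\cdot h^{-1}$, whose $\mu$-measure equals that of $\{g' : c^{g'} \in A\}$ by right-invariance of Haar measure on the compact group $\Gbb$.

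To identify $\nu$ with $\pi$ I would test them against an arbitrary continuous $f:~J \to \R$. Setting
\[
F(x) = \int_\Gbb f(x^g) \, d\mu(g),
\]
left-invariance of $\mu$ gives $F(x^h) = F(x)$ for every $h \in \Gbb$, so $F$ is constant on each $\Gbb$-orbit; transitivity then forces $F$ to be a constant function, whose value at $x=c$ is $\int_J f \, d\nu$. On the other hand, since every $g \in \Gbb$ preserves $\pi$, Fubini yields
\[
\int_J F(x) \, d\pi(x) = \int_\Gbb \int_J f(x^g) \, d\pi(x) \, d\mu(g) = \int_J f \, d\pi.
\]
Comparing gives $\int f \, d\nu = \int f \, d\pi$ for all continuous $f$, and hence $\nu = \pi$.

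The only non-formal step is the Fubini application, which needs $(x,g) \mapsto f(x^g)$ to be jointly measurable on $J \times \Gbb$. This follows from joint continuity of the action on $(J,\overline{r}_W)$: if $g_n \to g$ uniformly in $\Gbb$ and $x_n \to x$ in $\overline{r}_W$, then $\overline{r}_W(x_n^{g_n}, x^g) \le \overline{r}_W(x_n^{g_n}, x^{g_n}) + \overline{r}_W(x^{g_n}, x^g) \le \overline{r}_W(x_n, x) + d(g_n, g) \to 0$, using that each $g_n$ is an isometry. Once joint continuity is established the remaining verifications are routine.
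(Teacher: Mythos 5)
Your proof is correct, but it takes a genuinely different route from the paper's. The paper works ``upstairs'' on $\mathbb{G}$: it pulls $\pi$ back along $\alpha$ to an invariant measure on the sub-$\sigma$-algebra $\alpha^{-1}(\mathrm{Borel}(J))$ of $\mathbb{G}$, and then cites standard topological group theory (uniqueness of Haar measure, equivalently uniqueness of the invariant measure on the compact homogeneous space $\mathrm{Stab}(c)\backslash\mathbb{G}\cong J$) to conclude that this pullback is the restriction of $\mu$. You work ``downstairs'' on $J$: you push $\mu$ forward and compare the result with $\pi$ by testing against continuous functions, with the decisive step being the classical averaging trick --- $F(x)=\int_\mathbb{G} f(x^g)\,d\mu(g)$ is constant by transitivity, and Fubini identifies the constant in two ways. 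Your route is more self-contained, since it effectively re-proves the uniqueness of the invariant measure on the homogeneous space rather than invoking it, at the modest cost of having to verify joint continuity of the action $\mathbb{G}\times\overline{J}\to\overline{J}$ to justify Fubini (which you do correctly via the isometry property and the definition of $d(\sigma,\tau)$). Both approaches share the same initial step, the $1$-Lipschitz estimate giving continuity of $\alpha$. One minor remark: the $\mathbb{G}$-invariance of $\nu$ that you establish is not actually used in the averaging computation --- only the $\mathbb{G}$-invariance of $\pi$ enters --- so that paragraph could be dropped.
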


The definition of the metric on $\Aut(W)$ implies that
$\overline{r}_W(\alpha(g),\alpha(h))=\overline{r}_W(c^g,c^h)\le
d(g,h)$ for every $g,h\in\Gbb$. This shows that $\alpha$ is
continuous and hence, measurable.

Let $\mathcal{B}$ denote the sigma-algebra on $\mathbb{G}$ formed by
the sets $\alpha^{-1}(X)$, where $X$ is a Borel set in $J$, and let
$\nu$ denote the measure on $\mathcal{B}$ that is the pullback of
$\pi$. It is clear that $\nu$ is $\mathbb{G}$-invariant. Standard
topological group theory shows that $\nu$ extends to the Borel
sigma-algebra on $\mathbb{G}$ as the normalized Haar measure $\mu$.
This proves the Claim.

Since by definition $U=W^\alpha$, the Claim implies that $(\Gbb,U)$
is weakly isomorphic to $(J,W)$. Let $f:~\mathbb{G}\to [0,1]$ be
defined by $f(g)=W(c^g,c)$. Then
$U(g,h)=W(c^g,c^h)=w(c^{gh^{-1}},c)=f(gh^{-1})$, so $(\Gbb,U)$ is a
Cayley graphon.
\end{proof}

\begin{remark}\label{REM:GROUPS}
Theorem \ref{THM:TRCAYLEY} creates a connection between graph limit
theory and an interesting and rich limit theory for functions on
groups (see \cite{Sz1}, \cite{Sz2}). The idea is the following. Let
$\{f_i:\mathbb{G}_i\rightarrow [0,1]\}_{i=1}^\infty$ be a sequence of
measurable functions on compact groups. We say that the sequence
$f_i$ is convergent if the corresponding Cayley graphons
$\{W_i\}_{i=1}^\infty$ converge. By Proposition \ref{PROP:TRANS-LIM}
and Theorem \ref{THM:TRCAYLEY}, we prove that the limit of
$\{W_i\}_{i=1}^\infty$ is weakly isomorphic to a Cayley graphon
defined by a measurable function $f:\mathbb{G}\rightarrow[0,1]$ on a
compact group. We say that $f$ is the {\it limit object} of the
sequence $\{f_i\}_{i=1}^\infty$. It turns out that one can define
this limit concept without passing to graphons. This point of view
was heavily used in the second author's approach \cite{Sz1} to higher
order Fourier analysis.
\end{remark}

\subsection{Limits of node-transitive graphons}

We start with the observation that, if a convergent graph sequence
consists of node-transitive graphs, then their limit graphon is
node-transitive as well. More generally, we have the following
consequence of the fifth condition in Theorem \ref{THM:ALGTR}.

\begin{corollary}\label{PROP:TRANS-LIM}
If a sequence of node-transitive graphons is convergent, then their
limit graphon is also node-transitive.
\end{corollary}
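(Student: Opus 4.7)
The plan is to invoke condition (v) of Theorem~\ref{THM:ALGTR}, which encodes node-transitivity as a family of polynomial identities in the homomorphism densities of unlabeled simple graphs. Such identities are manifestly preserved under graphon convergence, so this reduces the corollary to bookkeeping.

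In detail, let $(W_n)$ be a convergent sequence of node-transitive graphons with limit graphon $W$, so that $t(G,W_n)\to t(G,W)$ for every simple graph $G$. Fix arbitrary $F,H\in\mathcal{G}_1$. Recall that the product in $\mathcal{Q}_1$ identifies the labeled vertices and then reduces multiple edges, so $F^2$, $H^2$, and $FH$ all lie in $\mathcal{G}_1$; consequently $\ul{F^2}$, $\ul{H^2}$, and $\ul{FH}$ are unlabeled simple graphs to which the definition of $t(\cdot,W_n)$ applies. By Theorem~\ref{THM:ALGTR} in the direction (i)$\Rightarrow$(v), applied to each $W_n$,
\[
t(\ul{F^2},W_n)\, t(\ul{H^2},W_n) \;=\; t(\ul{FH},W_n)^2.
\]

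Taking $n\to\infty$ on both sides and using convergence of the three relevant homomorphism densities yields
\[
t(\ul{F^2},W)\, t(\ul{H^2},W) \;=\; t(\ul{FH},W)^2
\]
for all $F,H\in\mathcal{G}_1$. Applying Theorem~\ref{THM:ALGTR} in the direction (v)$\Rightarrow$(i) to $W$ gives the desired node-transitivity. There is no substantive obstacle here: the content of the corollary is precisely the observation that the fifth, purely numerical characterization in Theorem~\ref{THM:ALGTR} makes node-transitivity a closed condition in the topology defined by convergence of homomorphism densities, and this corollary is the intended consequence.
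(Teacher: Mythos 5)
Your proof is correct and takes exactly the approach the paper indicates: the paper presents this corollary as an immediate consequence of condition (v) of Theorem~\ref{THM:ALGTR} without writing out the details, and you supply precisely those details (the identity $t(\ul{F^2},W_n)\,t(\ul{H^2},W_n)=t(\ul{FH},W_n)^2$ passes to the limit because each factor is a homomorphism density of a fixed simple graph).
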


What makes this simple assertion interesting is the fact that the
automorphism group of the limit graphon is not determined by the
automorphism groups of graphs or graphons in the convergent sequence.

\begin{example}\label{EXA:CYCLIC}
Fix any $0<\alpha<1$, and define the graph $G_n$ by $V(G_n)=[n]$,
where every $i\in[n]$ is connected to the next and previous
$\lfloor\alpha n\rfloor$ nodes (modulo $n$). The automorphism group
of $G_n$ is the dihedral group $D_n$. This sequence tends to the pure
graphon on $S^1$, with $W(x,y)=\one(\measuredangle(x,y)\le 1/2)$,
whose automorphism group is $O(2)$, the continuous version of the
dihedral groups.

No surprise so far. But let us consider the graphs $G_n\times
G_{n+1}$. Add edges connecting every node $(i,j)$ to $(i+a, j+a)$,
where $\alpha(n+1)<a<n/2$. Let $H_n$ denote the resulting graph.

Identifying node $(i,j)$ with $(i-1)n+j\in[n(n+1)]$, it is not hard
to see that $\Aut(H_n)=D_{n(n+1)}$. The limit of this graph sequence
is the pure graphon $(J,W)$, where $J$ is the torus $S^1\times S^1$,
and $W((x_1,x_2),(y_1,y_2))=\one(\measuredangle(x_1,y_1)\le 1/2,
\measuredangle(x_2,y_2)\le 1/2)$, whose automorphism group is the
wreath product of $O(2)$ with $Z_2$, a $2$-dimensional group
different from $O(2)$.
\end{example}

\begin{example}
The next example (in a slightly different form) is from the papers
\cite{Sz1} and \cite{Sz2}. It shows that even if the underlying group
$\mathbb{G}$ is the same for a convergent sequence of Cayley
graphons, a transitive action on the limit graphon may need a
different, bigger group. Let $\mathbb{G}=\mathbb{R}/\mathbb{Z}$ be
the circle group and let $\xi:\mathbb{G}\rightarrow\mathbb{C}$ be the
character defined by $\xi(x)=e^{2\pi i x}$. Let $f_n$ be the function
$\Im(1+\xi+\xi^n)/2$ where $\Im$ denotes the imaginary part. It is
not hard to see that the limit of the Cayley graphons corresponding
to $f_n$ is the Cayley graphon corresponding to the function
$f(x,y)=\Im(1+\xi(x)+\xi(y))/2$ on the torus $\mathbb{G}^2$.
\end{example}

In the light of the previous example the next theorem (which we quote
from \cite{Sz2}) is somewhat surprising. We need a definition.

\begin{definition}
Let $\mathbb{G}$ be a compact group with Haar measure $\mu$. Let
$V_n$ denote the subspace of $L^2(\mathbb{G},\mu)$ spanned by the
$\mathbb{G}$-invariant subspaces of dimension at most $n$. We say
that $\mathbb{G}$ is {\it weakly random} if $V_n$ is finite
dimensional for every $n$.
\end{definition}

\begin{theorem}\label{THM:CAYLEY}
Let $\mathbb{G}$ be a weakly random compact group. Let
$\{f_n:\mathbb{G}\rightarrow [0,1]\}_{n=1}^\infty$ be a sequence of
measurable functions such that the corresponding Cayley graphons
converge. Then the limit graphon is again a Cayley graphon on
$\mathbb{G}$.\proofend
\end{theorem}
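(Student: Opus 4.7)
The plan is to exploit the compatibility between the Cayley structure $W_n(x,y) = f_n(xy^{-1})$ and the spectral decomposition. The operator $T_{W_n}$ is the convolution operator $g \mapsto f_n * g$ on $L^2(\mathbb{G})$, so it commutes with the right-translation action of $\mathbb{G}$; hence every eigenspace of $T_{W_n}$ is a $\mathbb{G}$-invariant subspace. Since $\sum_r \lambda_r^2 \le \|W_n\|_2^2 \le 1$, we have $\dim U_\lambda(W_n) \le 1/\lambda^2$, so every irreducible $\mathbb{G}$-subrepresentation appearing in $U_\lambda(W_n)$ has dimension at most $1/\lambda^2$. The weakly random hypothesis then forces $U_\lambda(W_n) \subseteq V_{\lfloor 1/\lambda^2 \rfloor}$, a finite-dimensional subspace of $L^2(\mathbb{G})$ whose dimension does not depend on $n$.

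Consequently $[W_n]_\lambda$ lies in the fixed finite-dimensional ambient space $V_D \otimes V_D$ with $D = \lfloor 1/\lambda^2 \rfloor$. Fixing a sequence $\lambda_1 > \lambda_2 > \cdots \to 0$ of thresholds avoiding the (countable) set of eigenvalues of $W$, I would use Lemma \ref{meascon} together with a diagonal argument to pass to a subsequence of $(W_n)$ along which $[W_n]_{\lambda_m}$ converges uniformly to some $K_m \in V_D \otimes V_D$ for every $m$, with $K_m$ representing $[W]_{\lambda_m}$ in the sense of Lemma \ref{specpure}. The right-translation invariance $[W_n]_\lambda(xg,yg) = [W_n]_\lambda(x,y)$ passes to the limit, and since the ambient space consists of continuous functions (finite sums of matrix coefficients of irreducible representations), $K_m$ is itself a Cayley kernel: $K_m(x,y) = h_m(xy^{-1})$ for a continuous symmetric $h_m$ on $\mathbb{G}$.

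To finish, I take $m \to \infty$. Since $[W]_{\lambda_m} \to W$ in $L^2$ on the underlying space of $W$, and $K_m$ is weakly isomorphic to $[W]_{\lambda_m}$, the Cayley kernels $K_m$ form a Cauchy sequence in $L^2(\mathbb{G} \times \mathbb{G})$; equivalently the symbols $h_m$ converge in $L^2(\mathbb{G})$ to a symmetric function $h$. The bound $0 \le h \le 1$ almost everywhere follows because the Fourier coefficients $\widehat{f_n}(\rho)$ converge to $\widehat{h}(\rho)$ for every irreducible $\rho$, yielding weak $L^2$-convergence $f_n \to h$, and the set of $[0,1]$-valued functions is weakly $L^2$-closed. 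The Cayley graphon $(\mathbb{G}, h(xy^{-1}))$ is then weakly isomorphic to $W$ because their spectral truncations $[\cdot]_{\lambda_m}$ coincide for all $m$. The main obstacle is the coherence step: Lemma \ref{meascon} determines the limiting eigenvectors of $[W_n]_{\lambda_m}$ only up to the natural ambiguity in orthonormal bases of each eigenspace, so one must select bases simultaneously for all $\lambda_m$ so that the kernels $K_m$ extend one another in a genuinely Cauchy fashion, rather than being merely pairwise weakly isomorphic to the truncations of $W$.
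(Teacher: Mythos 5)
Note first that the paper does not actually prove this theorem: it is quoted from \cite{Sz2} and marked with $\square$, so there is no in-paper argument for me to compare yours against. Evaluated on its own terms, your outline is sound, and the two spots that you would still need to firm up are both tractable; in particular the ``main obstacle'' you flag is not a real obstacle.

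For the coherence of the $K_m$, use the nested-truncation identity $[[W_n]_{\lambda_{m'}}]_{\lambda_m}=[W_n]_{\lambda_m}$ for $\lambda_m>\lambda_{m'}$. You are working inside a fixed finite-dimensional space $V_{D_{m'}}\otimes V_{D_{m'}}$, where the spectral truncation $A\mapsto[A]_{\lambda_m}$ is continuous at any $A$ whose spectrum misses $\lambda_m$; and $\lambda_m$ does miss the spectrum of $K_{m'}$, because $K_{m'}$ is weakly isomorphic to $[W]_{\lambda_{m'}}$ and you chose the thresholds to avoid the spectrum of $W$. Passing to the limit in the nested-truncation identity therefore gives $K_m=[K_{m'}]_{\lambda_m}$ automatically; you only need to fix, once and for all, a basis of each $V_D$, not to select eigenbases coherently in $n$. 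The step that genuinely needs one more sentence is the claim that $\widehat{f_n}(\rho)\to\widehat{h}(\rho)$: what you control directly is $\widehat{[f_n]_{\lambda_m}}(\rho)\to\widehat{h_m}(\rho)$ as $n\to\infty$. To upgrade, observe that the discarded part of $\widehat{f_n}(\rho)$ has all its eigenvalues of modulus below $\lambda_m$, so $\|\widehat{f_n}(\rho)-\widehat{[f_n]_{\lambda_m}}(\rho)\|\le\lambda_m\sqrt{d_\rho}$ uniformly in $n$; this justifies interchanging the $n$ and $m$ limits, after which weak $L^2$-closedness of the $[0,1]$-valued functions gives $0\le h\le1$ as you intended. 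With these two points filled in, the argument is complete.
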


The best known example for a weakly random group is the orthogonal
group $O(3)$. This shows that Cayley graphons on $O(3)$ behave very
differently from Cayley graphons on $O(2)$. Cayley graphons on $O(3)$
are closed with respect to graphon convergence, however Cayley
graphons on $O(2)$ are not closed.

We cite a related result, which is a consequence of a theorem of
Gowers \cite{Gow}, indicating further, more subtle, relations between
the automorphism groups of graphs and their limits.

\begin{theorem}[Gowers]\label{THM:Gowers}
Let $G_n$ be a Cayley graph of a group $\Gamma_n$ $(n=1,2,\dots)$,
where the edge-density of $G_n$ tends to a limit $0\le c\le 1$, and
the minimum dimension in which $\Gamma_n$ has a nontrivial
representation tends to infinity. Then the sequence
$(G_n)_{n=1}^\infty$ is quasirandom, i.e., it tends to a pure graphon
$(J,W)$ where $J$ has a single point.\proofend
\end{theorem}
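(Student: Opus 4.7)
The plan is to verify the Chung--Graham--Wilson criterion for quasirandomness in graphon form: once we show $t(C_4,G_n)\to c^4$, the given $t(K_2,G_n)=c_n\to c$ combined with the Blakley--Roy inequality $t(C_4,W)\ge(\int W)^4$ (which holds with equality iff $W$ is constant a.e.) forces every subsequential graphon limit of $(G_n)$ to equal the constant graphon $c$, whose pure representation lives on a single point. So the whole task reduces to bounding the four-cycle density.

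View each $G_n$ as a Cayley graphon $W_n(x,y)=f_n(xy^{-1})$ on $\Gamma_n$ equipped with its normalized Haar measure, where $f_n=\one_{S_n}$ is the indicator of the connection set, so $c_n=\int f_n\to c$. The integral operator $T_n:=T_{W_n}$ on $L^2(\Gamma_n)$ is then convolution by $f_n$. By the Peter--Weyl theorem, $L^2(\Gamma_n)$ decomposes as an orthogonal direct sum of isotypic components indexed by the irreducible representations $\rho$ of $\Gamma_n$, and on the $\rho$-isotypic component $T_n$ acts via the Fourier coefficient $\hat f_n(\rho)=\int f_n(g)\rho(g)\,dg$; hence each eigenvalue of $\hat f_n(\rho)$ appears in the spectrum of $T_n$ with multiplicity $d_\rho=\dim\rho$. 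The trivial representation supplies the single eigenvalue $c_n$.

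The crucial estimate is that every non-trivial eigenvalue $\mu$ of $T_n$ satisfies $\mu^2\le 1/(4d_n)$. Such a $\mu$ is an eigenvalue of $\hat f_n(\rho)$ for some non-trivial $\rho$, so $d_\rho\ge d_n$, and Plancherel gives
\[
c_n \;=\; \|f_n\|_{L^2}^2 \;=\; \sum_\rho d_\rho \,\|\hat f_n(\rho)\|_{\mathrm{HS}}^2.
\]
Subtracting the trivial contribution $c_n^2$ yields $\sum_{\rho\ne\mathrm{triv}}d_\rho\,\|\hat f_n(\rho)\|_{\mathrm{HS}}^2=c_n(1-c_n)\le 1/4$, and since $\mu^2\le\|\hat f_n(\rho)\|_{\mathrm{HS}}^2$, we conclude $d_n\mu^2\le d_\rho\|\hat f_n(\rho)\|_{\mathrm{HS}}^2\le 1/4$.

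Putting this together, $t(C_4,G_n)=\tr(T_n^4)=\sum_i\lambda_i^4$. Separating the trivial-rep contribution $c_n^4$ and using the eigenvalue bound,
\[
0 \;\le\; t(C_4,G_n)-c_n^4 \;\le\; \Bigl(\max_{\mathrm{nontriv}}|\lambda|\Bigr)^2\sum_{\mathrm{nontriv}}\lambda_i^2 \;\le\; \frac{1}{4d_n}\cdot c_n(1-c_n) \;\longrightarrow\; 0
\]
as $d_n\to\infty$, which closes the argument. The main obstacle is the eigenvalue estimate in the third paragraph, i.e., Gowers' observation that a group without low-dimensional non-trivial representations is ``Fourier-quasirandom'' in the sense that the Fourier coefficients of any bounded function at nontrivial frequencies must be small in operator norm; once this representation-theoretic input is in place, everything else is routine spectral bookkeeping.
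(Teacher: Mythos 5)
The paper states this result without proof, crediting Gowers~\cite{Gow}, so there is no internal argument to compare against; what you have written is essentially Gowers' original representation-theoretic argument, recast in the graphon/Peter--Weyl language in which the multiplicities make the bookkeeping particularly clean. Your chain of reasoning is sound: the Fourier coefficient $\hat f_n(\rho)$ is Hermitian because $f_n$ is real and symmetric, the convolution operator $T_n$ is block-diagonalized by Peter--Weyl with each eigenvalue of $\hat f_n(\rho)$ appearing with multiplicity $d_\rho$, Plancherel gives $\sum_{\rho\ne\mathrm{triv}} d_\rho\,\|\hat f_n(\rho)\|_{\mathrm{HS}}^2=c_n(1-c_n)$, and the observation that $d_\rho\ge d_n$ turns this into the eigenvalue bound $\mu^2\le c_n(1-c_n)/d_n$. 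The passage from $t(C_4,G_n)\to c^4$ and $t(K_2,G_n)\to c$ to convergence to the constant graphon via the extremality of constants for $t(C_4,\cdot)$ at fixed edge density is the standard graphon-side argument, and it correctly handles the ``tends to'' by showing every subsequential limit must be the constant $c$. A small attribution slip: the inequality $t(C_4,W)\ge t(K_2,W)^4$ with equality iff $W$ is a.e.\ constant is not the Blakley--Roy inequality (which concerns paths); it is usually obtained by two applications of Cauchy--Schwarz, or as the $C_4$ case of Sidorenko's conjecture, or spectrally as you could do directly with the decomposition you already have ($\sum_i\lambda_i^4\ge\lambda_1^4\ge(\int W)^4$). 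This does not affect the correctness of the argument. One can also note that your proof works verbatim for Cayley graphons on arbitrary second countable compact groups, not only finite ones, which is slightly more general than what the statement asserts.
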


\ignore{ The main result of this section generalizes Gowers' Theorem:
Very roughly speaking, it says that only bounded dimensional
representations of the automorphism groups matter in the limit of
node-transitive graphons. To make this precise we use the
non-standard approach to graph limit theory developed in
\cite{ElSz}.}

Our goal is to determine the automorphism group of the limit of a
sequence of node transitive graphs. Using lemma \ref{specaut} one can
reduce the problem of computing the automorphism group of $W$ to the
same problem about bounded rank graphons. To demonstrate this
principle we show the next theorem. Recall that a compact group
$\Gamma$ is {\it abelian by pro-finite} if it has a closed abelian
normal subgroup $A$ such that $\Gamma/A$ is the inverse limit of
finite groups.

\begin{theorem}
Let $\{G_n\}_{n=1}^\infty$ be a sequence of node-transitive graphs
converging to a graphon $(J,W)$. Then $(J,W)$ is weakly isomorphic to
a Cayley graphon on an abelian by pro-finite group.
\end{theorem}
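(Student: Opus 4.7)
The plan is to construct a closed subgroup $H$ of $\Aut(J,W)$ which acts transitively on $\overline{J}$ and is abelian by pro-finite. Since the construction in the second half of the proof of Theorem~\ref{THM:TRCAYLEY} only uses the compactness and transitivity of the acting group, the same argument with $H$ in place of $\Aut(J,W)$ will exhibit $(J,W)$ as weakly isomorphic to a Cayley graphon on $H$. To build $H$, let $H_n:=\Aut(G_n)$ (finite and transitive on $V(G_n)$), fix a decreasing sequence $\alpha_i\to 0$ avoiding the eigenvalues of $W$, and set $O_i:=\bigoplus_{|\lambda_r|\ge\alpha_i}O(E_{\lambda_r})$. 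The action of $H_n$ on $V(G_n)$ permutes each eigenspace of $W_{G_n}$ orthogonally, embedding $H_n$ as a finite subgroup $K_n^{(i)}$ of $O_i$. Using Lemma~\ref{meascon} and a diagonal argument, pass to a subsequence in $n$ so that, for every $i$, the spectral measures satisfy $\mu^{(n)}_{\alpha_i}\to\mu_{\alpha_i}$ weakly and $K_n^{(i)}\to K^{(i)}$ in the Hausdorff topology on closed subgroups of $O_i$. Since $K_n^{(i)}$ preserves $[G_n]_{\alpha_i}$ and $\mu^{(n)}_{\alpha_i}$, the limit $K^{(i)}$ preserves $[W]_{\alpha_i}$ and $\mu_{\alpha_i}$, so $K^{(i)}\subseteq\Gamma_{\alpha_i}$. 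The projections $h_{\alpha_{i+1},\alpha_i}$ of Lemma~\ref{specaut} carry $K^{(i+1)}$ to $K^{(i)}$, so $H:=\varprojlim_i K^{(i)}$ is a closed subgroup of $\Aut(J,W)$; its transitivity on $\overline{J}\cong\varprojlim_i S_{\alpha_i}$ will follow from transitivity of $K_n^{(i)}$ on $\supp\mu^{(n)}_{\alpha_i}$ combined with a Portmanteau-type argument and the nonempty-inverse-limit principle.

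To show that $H$ is abelian by pro-finite, invoke Jordan's theorem: there is a constant $c_i$, depending only on $O_i$, such that every finite subgroup of $O_i$ contains an abelian normal subgroup of index at most $c_i$. Choose $A_n^{(i)}\triangleleft K_n^{(i)}$ realizing this bound and pass to a further subsequence so that $A_n^{(i)}\to A^{(i)}$ in Hausdorff topology. Normality, abelianness, and the index bound all survive the limit (the last by selecting coset representatives and extracting convergent subsequences), so $A^{(i)}$ is a closed abelian normal subgroup of $K^{(i)}$ of finite index, hence open. Since the connected component $(K^{(i)})^0$ is connected and lies in every open subgroup of $K^{(i)}$, we get $(K^{(i)})^0\subseteq A^{(i)}$, so $(K^{(i)})^0$ is abelian. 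The continuous projection $H\to K^{(i)}$ sends $H^0$ into $(K^{(i)})^0$ for every $i$; since elements of the inverse limit $H$ are determined by their projections, $H^0$ is abelian. Finally $H/H^0$ is compact and totally disconnected, hence pro-finite, so $H$ is abelian by pro-finite as required.

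The main obstacle will be the careful handling of the nested subsequences and the different modes of convergence involved---weak convergence of spectral measures, Hausdorff convergence of closed subgroups, and compatibility under the inverse system of spectral truncations. Transferring transitivity from $K_n^{(i)}$ on $\supp\mu^{(n)}_{\alpha_i}$ to $K^{(i)}$ on $S_{\alpha_i}$, and then to a transitive action of $H$ on $\overline{J}$, requires combining a Portmanteau-type argument with the nonempty-inverse-limit principle; verifying that index bounds, normality, and abelianness survive Hausdorff limits of closed subgroups in a compact Lie group is a smaller but still nontrivial technical point.
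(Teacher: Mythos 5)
Your proposal follows essentially the same route as the paper: extract a convergent subsequence of spectral measures via Lemma~\ref{meascon}, use the finite automorphism data of the $G_n$ inside the truncated orthogonal groups, apply Jordan's theorem to get uniformly bounded-index abelian subgroups, pass to the limit (you use Hausdorff limits of closed subgroups where the paper uses ultralimits, an inessential difference), and take the inverse limit over the spectral truncations to obtain a transitive closed subgroup of $\Aut(W)$, concluding by the second half of Theorem~\ref{THM:TRCAYLEY}. Your explicit argument via connected components (showing $H^0$ is abelian and $H/H^0$ is profinite) fills in a step the paper leaves implicit, but the underlying mechanism is the same.
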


\begin{proof}
We want to show that $G={\rm Aut}(W)$ has a closed, abelian by
pro-finite subgroup that acts transitively on $J$. Let
$\{\alpha_i\}_{i=1}^\infty$ be a decreasing sequence of real numbers
with $\lim_{i\to\infty}\alpha_i=0$ that contains no eigenvalue of
$W$. We can assume that $(J,W)$ is pure. Since $W$ is node
transitive, theorem \ref{THM:AUT-COMP} implies that $J$ is compact
and $\overline{J}=J$. We will use the notation from chapter
\ref{SEC:SPECPURE}.

For every $W_j$ let $\mu^j_{\alpha_i}$ denote the measure defined
above for $W_j$ in the explicit coordinate system $\mathbb{R}^{d_i}$
where $d_i=\dim(U_{\alpha_i})$. For finitely many values of $j$ the
measure $\mu^j_{\alpha_i}$ may exist in a different dimension but we
ignore those values. By choosing a subsequence we can assume without
loss of generality that the conditions of the lemma \ref{meascon}
hold for every $i$.

Let  $G^j_i\subset O(d_i)$ denote the automorphism group of
$\mu^j_{\alpha_i}$ and let $H_i$ denote the closed subgroup in
$O(d_i)$ whose elements are ultra-limits (for some fixed ultrafilter
$\omega$) of sequences $(g_1,g_2,\dots)$ where $g_j\in G^j_i$. It is
clear that elements of $H_i$ preserve $\nu_i$ and it acts
transitively on $S_i$.

We claim that $H_i$ is abelian by finite. A classical theorem by
Camille Jordan \cite{CR} states that there is a function $f(n)$ such
that any finite subgroup of $GL(n,\mathbb{C})$ contains an abelian
group of index at most $f(n)$. Using this theorem, we see that each
$G^j_i$ has an abelian subgroup of index at most $f(d_i)$. It is a
standard technique to show that this property is inherited by the
ultralimit $H_i$. If the groups $G^j_i$ are all abelian, then the
continuity of the commutator word shows that $H_i$ is abelian. For
the general case, choose $f(d_i)$ coset representatives $g_{i,j,k}$
in each group $G^j_i$ for the abelian subgroup where $1\leq k\leq
f(d_i)$. Their limits as $j\to\infty$ will be coset representatives
for the limiting abelian group.

To finish the proof, let $H$ be the inverse limit of the groups $H_i$
with respect to the homomorphisms $P_{\alpha_{i+1},\alpha_i}$. Then
$H\subseteq\Gamma_W$ and $H$ acts transitively on $S$. By lemma
\ref{specaut} we obtain that $\tau^{-1}\circ
H\circ\tau\subseteq\Aut(W)$ is transitive on $J$.
\end{proof}

\section{Graph algebras of finite rank graphons}

We conclude with an application of our results on automorphisms of
graphons to characterize graph algebras of graphons that have finite
rank as integral kernel operators. Let $(J,W)$ be a pure graphon with
finite rank. The spectral decomposition \eqref{EQ:SPEC1} takes the
simpler form
\begin{equation}\label{EQ:SPECDEC}
W(x,y)=\sum_{i=1}^t \lambda_i f_i(x)f_i(y).
\end{equation}
For any sufficiently small $\lambda>0$, we have $[W]_\lambda=W$, and
so the considerations in Section \ref{SEC:SPECPURE} imply that
$(J,r_W)$ is compact.

\ignore{
\begin{lemma}\label{LEM:FIN-COMP}
If $W$ has finite operator rank, then $(J,d_W)$ is a compact metric
space (and hence so is $(J,r_W)$).
\end{lemma}

\begin{proof}
We use the formula where the functions $f_i:J\rightarrow\mathbb{R}$
are the eigenfunctions of $W$ (pairwise orthogonal, and normalized to
have length $1$) and the numbers $\lambda_i$ are the corresponding
nonzero eigenvalues. The mapping
\[
x\mapsto v_x:=(\lambda_1f_1(x),\lambda_2f_2(x),\dots,
\lambda_tf_t(x))\in\mathbb{R}^t
\]
is an isometric embedding of $(J,d_W)$ into $\R^t$, and its range is
bounded since $\|f_i\|_\infty\leq |1/\lambda_i|$, and so
$v_x\in[-1,1]^t$ for every $x$. We know that $(J,d_W)$ is complete,
and hence it follows that it is compact.
\end{proof}
}

Let $\mathbb{G}=\Aut(W)$, and let $S$ be the function algebra
generated by the eigenfunctions of $W$. We denote by $S_n$ the space
of homogeneous polynomials of degree $n$ in the eigenfunctions of
$W$, so that $S=\bigoplus_n S_n$. Substituting \eqref{EQ:SPECDEC} in
the definition \eqref{EQ-REST-HOM} of restricted homomorphism
numbers, we see that $\AA_1\subseteq S$. Since the functions in
$\AA_1$ are $\Gbb$-invariant, it follows that $\AA_1\subseteq
S^\Gbb$. Our main goal is to prove that equality holds here.

For $h\in L^\infty(J^n)$, we define
\begin{equation}\label{EQ:HIGMULT}
r(h,x)=\int\limits_{x_1,x_2,\dots,x_n}h(x_1,x_2,\dots ,x_n)
\prod_{i=1}^n W(x,x_i).
\end{equation}
The following lemma states some elementary properties of this
function.

\begin{lemma}\label{LEM:FR1}
{\rm(a)} If $h\in L^\infty(J_n)$ then $r(h,x)\in S_n$ (as a function
of $x\in J$).

\smallskip

{\rm(b)} If $h\in\AA_n$, then $r(h,x)\in\AA_1$.

\smallskip

{\rm(c)} $r(h^g,x)=r(h,x)^g$ for every $g\in\mathbb{G}$.
\end{lemma}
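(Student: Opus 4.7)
\medskip

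\noindent\textbf{Proof plan.}

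For part (a), the plan is to insert the finite spectral decomposition \eqref{EQ:SPECDEC} into the integrand. Since $W$ has finite rank, after adjusting the $f_i$ on a null set (as justified in the discussion around \eqref{EQ:EIGEN}), we may treat $W(x,y)=\sum_{i=1}^{t}\lambda_i f_i(x)f_i(y)$ as a pointwise equality, and the product expands to
\[
\prod_{i=1}^n W(x,x_i) = \sum_{j_1,\dots,j_n=1}^t \Bigl(\prod_{i=1}^n \lambda_{j_i}\Bigr)
\Bigl(\prod_{i=1}^n f_{j_i}(x)\Bigr) \prod_{i=1}^n f_{j_i}(x_i).
\]
Substituting into \eqref{EQ:HIGMULT} and interchanging the finite sum with the integral gives
\[
r(h,x) = \sum_{j_1,\dots,j_n} \Bigl(\prod_i \lambda_{j_i}\Bigr)
\Bigl(\int h(x_1,\dots,x_n)\prod_i f_{j_i}(x_i)\,dx_1\cdots dx_n\Bigr)\prod_i f_{j_i}(x).
\]
The bracketed integral is a constant, and each $\prod_i f_{j_i}(x)$ is a degree-$n$ monomial in the eigenfunctions of $W$, so $r(h,\cdot)\in S_n$.

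For part (b), by linearity it suffices to verify the claim when $h=\psi_n(F)=t_{x_1\dots x_n}(F,W)$ for a single $n$-labeled simple graph $F\in\GG_n$. Let $F'$ be the $1$-labeled graph obtained from $F$ by adjoining one new node (carrying the label $1$), joining it by an edge to each of the previously labeled nodes of $F$, and then unlabeling those. Then by the definition of restricted homomorphism densities we obtain
\[
r(h,x) = \int\limits_{J^n} t_{x_1\dots x_n}(F,W)\prod_{i=1}^n W(x,x_i)\,dx_1\cdots dx_n = t_x(F',W) = \psi_1(F')(x),
\]
which lies in $\AA_1$. Extending by linearity gives $r(h,\cdot)\in\AA_1$ for every $h\in\AA_n$.

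For part (c), let $g\in\Gbb=\Aut(J,W)$. By Definition \ref{DEF:AUTO}, for every fixed $x\in J$ the identity $W(x^g,y^g)=W(x,y)$ holds for almost every $y$; applying this with $g$ replaced by $g^{-1}$ and $x$ by $x^g$ shows that for each $x$, $W(x,y^{g^{-1}})=W(x^g,y)$ for almost every $y$. Since $g$ is measure preserving, the substitution $y_i=x_i^g$ in
\[
r(h^g,x)=\int\limits_{J^n} h(x_1^g,\dots,x_n^g)\prod_{i=1}^n W(x,x_i)\,dx_1\cdots dx_n
\]
transforms the integrand into $h(y_1,\dots,y_n)\prod_i W(x,y_i^{g^{-1}})$, and replacing each $W(x,y_i^{g^{-1}})$ by $W(x^g,y_i)$ (valid almost surely) produces exactly $r(h,x^g)=r(h,x)^g$. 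The main subtlety throughout is that Definition \ref{DEF:AUTO} only guarantees the invariance of $W$ along one slice at a time, so in (c) one must be careful to fix $x$ before invoking the almost-everywhere identity in the remaining variables; the rest of the lemma is a straightforward unpacking of definitions.
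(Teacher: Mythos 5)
Your proposal is correct and follows essentially the same route as the paper for all three parts: (a) substituting the finite spectral decomposition and expanding, (b) constructing the $1$-labeled graph $F'$ by attaching a new labeled node to the formerly labeled ones, and (c) combining the measure-preserving change of variables with the slice-wise automorphism identity $W(x^g,y^g)=W(x,y)$. The only difference is cosmetic — you substitute first and then invoke the automorphism identity, whereas the paper replaces $W(x,x_i)$ by $W(x^g,x_i^g)$ first and then changes variables — but the ingredients and the level of care are identical.
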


\begin{proof}
Assertion (a) follows by substituting formula (\ref{EQ:SPECDEC}) in
(\ref{EQ:HIGMULT}). To prove (b), let $h(x_1,\dots,x_n)=t_{x_1\dots
x_n}(s,W)$, and let $s'\in\QQ_1$ denote the one-labeled quantum graph
obtained from $s$ by connecting a new node with label $1$ to all the
labeled nodes and then we removing the original labels. Then
$r(h,x)=t_x(s',W)$. Finally, (c) follows by replacing $W(x,x_i)$ by
$W(x^g,x_i^g)$ in the formula for $r(h^g,x)$. Since the action of
$\mathbb{G}$ is measure preserving, the integration over
$(x_1^g,x_2^g,\dots ,x_n^g)$ is equivalent to the integration over
$(x_1,x_2,\dots,x_n)$.
\end{proof}

\begin{lemma}\label{LEM:FR2}
Every function $f\in S_n$ can be expressed as $f=r(h,x)$ for some
function $h\in L^\infty(J^n)$.
\end{lemma}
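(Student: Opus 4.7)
The plan is to exploit the finite-rank assumption to reduce everything to a finite-dimensional bookkeeping problem in the eigenbasis. Since $W=\sum_{i=1}^t\lambda_i f_i(x)f_i(y)$ with only finitely many nonzero terms, we can expand the product of $n$ copies of $W$ as
\[
\prod_{i=1}^n W(x,x_i)=\sum_{(j_1,\dots,j_n)\in[t]^n}
\Bigl(\prod_{\ell=1}^n\lambda_{j_\ell}\Bigr)\,
\Bigl(\prod_{\ell=1}^n f_{j_\ell}(x)\Bigr)\,
\prod_{\ell=1}^n f_{j_\ell}(x_\ell).
\]
Substituting into the definition \eqref{EQ:HIGMULT} of $r(h,x)$ and interchanging finite sum with integration gives
\[
r(h,x)=\sum_{(j_1,\dots,j_n)\in[t]^n}\Bigl(\prod_{\ell=1}^n\lambda_{j_\ell}\Bigr)\,
\widehat{h}(j_1,\dots,j_n)\,\prod_{\ell=1}^n f_{j_\ell}(x),
\]
where $\widehat{h}(j_1,\dots,j_n):=\int_{J^n} h\cdot\bigl(f_{j_1}\otimes\cdots\otimes f_{j_n}\bigr)$.

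Now, by the very definition of $S_n$, any $f\in S_n$ can be written as a finite linear combination
\[
f(x)=\sum_{(j_1,\dots,j_n)\in[t]^n} c_{j_1,\dots,j_n}\,f_{j_1}(x)\cdots f_{j_n}(x)
\]
for some real coefficients $c_{j_1,\dots,j_n}$ (not necessarily unique, but any choice will do). Given such a representation, I would simply set
\[
h(x_1,\dots,x_n)=\sum_{(j_1,\dots,j_n)\in[t]^n}
\frac{c_{j_1,\dots,j_n}}{\lambda_{j_1}\cdots\lambda_{j_n}}\,
f_{j_1}(x_1)\cdots f_{j_n}(x_n).
\]
This is a finite sum, and each $f_i$ is bounded (by $1/|\lambda_i|$, as already noted after \eqref{EQ:EIGEN}), so $h\in L^\infty(J^n)$. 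Since the $f_i$ form an orthonormal system, the tensor products $f_{j_1}\otimes\cdots\otimes f_{j_n}$ are also orthonormal in $L^2(J^n)$, and hence
\[
\widehat{h}(j_1,\dots,j_n)=\frac{c_{j_1,\dots,j_n}}{\lambda_{j_1}\cdots\lambda_{j_n}}.
\]
Plugging back into the formula for $r(h,x)$, the $\lambda$-factors cancel and we recover $r(h,x)=f(x)$.

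There is no genuine obstacle: the only thing to be careful about is that the factors $\lambda_{j_\ell}$ in the denominator are all nonzero (which is the case since, by convention in \eqref{EQ:SPECDEC}, only nonzero eigenvalues appear in the finite-rank spectral decomposition), and that $h$ is genuinely bounded, which follows from finiteness of the rank together with the boundedness of each eigenfunction.
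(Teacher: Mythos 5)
Your proof is correct and takes essentially the same route as the paper: both arguments rest on the single computation that $r(f_{j_1}\otimes\cdots\otimes f_{j_n},x)=\lambda_{j_1}\cdots\lambda_{j_n}f_{j_1}(x)\cdots f_{j_n}(x)$, together with linearity of $r(\cdot,x)$ in the first argument and nonvanishing of the $\lambda_j$. The paper phrases it more tersely by observing that these images already span $S_n$, whereas you explicitly invert the $\lambda$-scaling to write down $h$, but the content is the same.
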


\begin{proof}
If $h(x_1,x_2,\dots ,x_n)=f_{i_1}(x_1)f_{i_2}(x_2)\dots
f_{i_n}(x_n)$, then
\[
r(h,x)=\lambda_{i_1}\lambda_{i_2}\dots \lambda_{i_n}
f_{i_1}(x)f_{i_2}(x)\dots f_{i_n}(x).
\]
Every function $f\in S_n$ can be expressed as a linear combination of
functions such as that on the right side of the previous formula.
Since $r(h,x)$ is linear in $h$, this completes the proof.
\end{proof}

\begin{lemma}\label{LEM:FR3}
Every function $f\in S_n^\mathbb{G}$ can be expressed as $f=r(h,x)$
for some $\mathbb{G}$-invariant function $h\in L^\infty(J^n)$.
\end{lemma}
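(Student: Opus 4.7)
The plan is to use Haar averaging. By Theorem \ref{THM:AUT-COMP} the group $\mathbb{G}=\Aut(J,W)$ is compact, so it carries a normalized Haar measure $\mu$. Starting from the representation provided by Lemma \ref{LEM:FR2}, I would pick any $h_0\in L^\infty(J^n)$ with $f=r(h_0,\cdot)$, and then define
\[
h(x_1,\dots,x_n)=\int_{\mathbb{G}} h_0(x_1^g,\dots,x_n^g)\,d\mu(g).
\]
By construction $h$ is $\mathbb{G}$-invariant and bounded by $\|h_0\|_\infty$, so $h\in L^\infty(J^n)^{\mathbb{G}}$.

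Next I would check that $h$ has the required property. Linearity of $r$ in its first argument, together with Fubini, gives
\[
r(h,x)=\int_{\mathbb{G}} r(h_0^g,x)\,d\mu(g).
\]
By Lemma \ref{LEM:FR1}(c), $r(h_0^g,x)=r(h_0,x)^g=f^g(x)$. Since $f\in S_n^{\mathbb{G}}$, we have $f^g=f$ for every $g\in\mathbb{G}$, and therefore the integrand is the constant function $f(x)$. Hence $r(h,x)=f(x)$ as desired.

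The main obstacle is the measurability/integrability needed to make the average well-defined and to apply Fubini. Concretely, one needs the map $(g,x_1,\dots,x_n)\mapsto h_0(x_1^g,\dots,x_n^g)$ to be jointly measurable on $\mathbb{G}\times J^n$. This follows from the fact that the $\mathbb{G}$-action on $\overline{J}$ is by isometries of $(\overline{J},\overline{r}_W)$ and is (separately) continuous in the group variable in the metric topology we put on $\mathbb{G}$, so it is jointly measurable; for $h_0$ being a bounded Borel representative of an $L^\infty$ class this gives joint measurability of the composition. Together with boundedness this legitimizes the use of Fubini, completing the argument.
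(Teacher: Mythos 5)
Your proposal is correct and matches the paper's argument exactly: the paper also takes a representative $q$ from Lemma \ref{LEM:FR2}, averages it over $\mathbb{G}$ with respect to Haar measure to get $h=\int_{\mathbb{G}} q^g$, and invokes Lemma \ref{LEM:FR1}(c) plus linearity of $r(\cdot,x)$ to conclude $f=r(h,x)$. You have merely spelled out the Fubini and measurability considerations that the paper leaves implicit.
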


\begin{proof}
By Lemma \ref{LEM:FR2}, $f(x)=r(q,x)$ for a suitable $q\in
L^\infty(J^n)$. Let $h=\int_\mathbb{G} q^g$. It is clear that $h$ is
$\mathbb{G}$-invariant. By lemma \ref{LEM:FR1} and the linearity of
$r(.\,,.)$ in the first variable it follows that $f(x)=r(h,x)$.
\end{proof}

\begin{lemma}
$S_n^\mathbb{G}=\AA_1\cap S_n$.
\end{lemma}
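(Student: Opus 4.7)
The plan is to prove the two inclusions separately. The inclusion $\AA_1 \cap S_n \subseteq S_n^{\mathbb{G}}$ is immediate, since every element of $\AA_1$ is $\mathbb{G}$-invariant, as noted at the beginning of this section. For the reverse inclusion, given $f \in S_n^{\mathbb{G}}$, I would first apply Lemma \ref{LEM:FR3} to write $f = r(h, \cdot)$ for some $\mathbb{G}$-invariant $h \in L^\infty(J^n)$.

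The main obstacle is that such an $h$ need not be continuous, so Corollary \ref{COR:ALGINV} cannot be applied to it directly. The key observation that circumvents this is that $r(h,x)$ depends on $h$ only through finitely many $L^2$-inner products: expanding via the spectral decomposition \eqref{EQ:SPECDEC} gives
\begin{equation*}
\prod_{i=1}^n W(x, x_i) = \sum_{j_1, \dots, j_n=1}^{t} \Bigl(\prod_{i=1}^n \lambda_{j_i} f_{j_i}(x)\Bigr) \prod_{i=1}^n f_{j_i}(x_i),
\end{equation*}
so if $V \subseteq L^2(J^n)$ denotes the finite-dimensional span of the tensor products $f_{j_1}\otimes\cdots\otimes f_{j_n}$ and $\tilde h$ is the orthogonal projection of $h$ onto $V$, then $r(\tilde h, \cdot) = r(h, \cdot) = f$. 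Moreover, $V$ is preserved by $\mathbb{G}$ (because each eigenspace $E_{\lambda_r}$ is), so the projection is $\mathbb{G}$-equivariant, which makes $\tilde h$ still $\mathbb{G}$-invariant; and because $\tilde h$ is a finite linear combination of products of eigenfunctions, which are continuous on the compact space $J = \overline{J}$ (recall that $(J, r_W)$ is compact when $W$ has finite rank), $\tilde h$ is continuous on $J^n$.

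With $\tilde h$ now continuous and $\mathbb{G}$-invariant, Corollary \ref{COR:ALGINV} yields a sequence $h_m \in \AA_n^0$ with $\|h_m - \tilde h\|_\infty \to 0$. By Lemma \ref{LEM:FR1}(a,b) each $r(h_m, \cdot)$ lies in $\AA_1 \cap S_n$, and the elementary bound $\|r(g, \cdot)\|_\infty \le \|g\|_\infty$ (which follows from $0 \le W \le 1$ together with $\pi(J)=1$) implies $r(h_m, \cdot) \to f$ uniformly. To finish, I would invoke the fact that $S_n$ is finite-dimensional (being spanned by the at most $\binom{n+t-1}{t-1}$ monomials of degree $n$ in $f_1, \dots, f_t$), so its subspace $\AA_1 \cap S_n$ is closed in $S_n$, and hence the limit $f$ lies in $\AA_1 \cap S_n$.
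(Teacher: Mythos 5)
Your proposal is essentially the paper's argument, but with two gaps filled more carefully.

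The paper's own proof applies Corollary \ref{COR:ALGINV} directly to the function $h$ produced by Lemma \ref{LEM:FR3}. That corollary, however, only characterizes the $L^\infty$-closure of $\AA_n^0$ as the \emph{continuous} $\mathbb{G}$-invariant functions, and the \emph{statement} of Lemma \ref{LEM:FR3} asserts only $h\in L^\infty(J^n)$, not continuity. (In fact the $h$ constructed inside the proof of Lemma \ref{LEM:FR3} is continuous, since it is the $\mathbb{G}$-average over the compact group of the continuous function $q$ from Lemma \ref{LEM:FR2}, but the paper never says so.) Your device of replacing $h$ by its orthogonal projection $\tilde h$ onto $V=R^{\otimes n}$, where $R=\mathrm{span}(f_1,\dots,f_t)$, is a clean way to get continuity honestly from what the lemmas assert: $\prod_i W(x,\cdot_i)\in V$ so $r(h,\cdot)=r(\tilde h,\cdot)$, $V$ is $\mathbb{G}$-invariant so the projection is equivariant and $\tilde h$ remains $\mathbb{G}$-invariant, and $\tilde h$ is a finite linear combination of products of continuous eigenfunctions. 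The second gap is the final step: the paper infers $f\in\AA_1$ from the uniform convergence $r(q_k,\cdot)\to f$ with $r(q_k,\cdot)\in\AA_1$, but $\AA_1$ is not obviously closed in $L^\infty$; your observation that each $r(q_k,\cdot)$ lies in the finite-dimensional, hence closed, subspace $\AA_1\cap S_n$ is exactly what is needed and is left implicit in the paper. Both fixes are correct, and the norm bound $\|r(g,\cdot)\|_\infty\le\|g\|_\infty$ is indeed the right estimate to pass the uniform limit through $r$.
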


\begin{proof}
Trivially $S_n^\mathbb{G}\supseteq \AA_1\cap S_n$. To prove the
reverse, let $f\in S_n^\mathbb{G}$. Trivially $f\in S_n$, so it
suffices to prove that $f\in\AA_1$. Lemma \ref{LEM:FR3} shows that
$f(x)=r(h,x)$ for some $\mathbb{G}$-invariant function $h\in
L^\infty(J^n)$. Using Corollary \ref{COR:ALGINV}, there is a sequence
of functions $q_k\in\AA^0_n$ such that $q_k\to h$ ($k\to\infty$)
uniformly in $x$. By Lemma \ref{LEM:FR1}, $r(q_k,x)\in\AA_1$ (as a
function of $x\in J$), and clearly $r(q_k,x)\to f=r(h,x)$
uniformly in $x$. This implies that $f\in\AA_1$.
\end{proof}

\begin{theorem}
$\AA_1=S^\mathbb{G}$.
\end{theorem}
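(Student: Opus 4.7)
The plan is to reduce to the homogeneous case and apply the previous lemma. The containment $\AA_1 \subseteq S^\Gbb$ has already been observed in the text (substituting \eqref{EQ:SPECDEC} into the definition of restricted homomorphism numbers shows $\AA_1 \subseteq S$, and functions in $\AA_1$ are $\Gbb$-invariant by construction). So the content is the reverse containment $S^\Gbb \subseteq \AA_1$.

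First I would verify that the action of $\Gbb$ on $S$ respects the grading $S = \bigoplus_n S_n$. For any $g \in \Gbb$ and any eigenfunction $f_i$, the function $f_i^g$ is again a unit-length eigenfunction of $W$ for the same eigenvalue $\lambda_i$, hence $f_i^g$ lies in the (finite-dimensional) eigenspace $E_{\lambda_i} \subseteq S_1$. Consequently, for any monomial $f_{i_1} f_{i_2} \cdots f_{i_n}$ of degree $n$, the translate under $g$ is a product of elements of $S_1$, hence lies in $S_n$. This shows $S_n^g \subseteq S_n$ for each $n$.

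Now take any $f \in S^\Gbb$. Using the direct sum decomposition, write $f = \sum_{n=0}^N f_n$ uniquely with $f_n \in S_n$ (the sum is finite because elements of the polynomial algebra $S$ are finite sums of homogeneous pieces). For any $g \in \Gbb$, the equality $f = f^g$ together with $f_n^g \in S_n$ and the uniqueness of the homogeneous decomposition forces $f_n^g = f_n$ for each $n$. Thus each $f_n$ lies in $S_n^\Gbb$.

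By the lemma just proved, $S_n^\Gbb = \AA_1 \cap S_n \subseteq \AA_1$, so each $f_n \in \AA_1$. Since $\AA_1$ is a linear subspace (indeed a subalgebra) of $L^\infty(J)$, the finite sum $f = \sum_{n=0}^N f_n$ lies in $\AA_1$, completing the proof. The main subtlety to confirm is just that $\Gbb$ preserves the grading, which is immediate once one recalls that $\Gbb$ preserves each eigenspace of $T_W$; the rest is bookkeeping on top of the preceding lemma.
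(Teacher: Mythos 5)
Your proof is correct and follows essentially the same route as the paper: decompose $f\in S^{\Gbb}$ into its homogeneous components, observe that $\Gbb$ preserves each $S_n$ (since automorphisms permute eigenfunctions within eigenspaces, so the grading is $\Gbb$-stable), deduce each component is in $S_n^{\Gbb}$, and then invoke the preceding lemma $S_n^{\Gbb}=\AA_1\cap S_n$. The paper states this more tersely but the argument is identical; your extra detail on why the homogeneous pieces of a $\Gbb$-invariant function are themselves $\Gbb$-invariant fills in exactly what the paper leaves implicit.
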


\begin{proof}
We have seen that $\AA_1\subseteq S^\mathbb{G}$. To prove the
reverse, we note that every function $f\in S$ is a finite sum of
functions $\sum_n f_n$, where $f_n\in S_n$, and if $f$ is
$\Gbb$-invariant, then so are the terms $f_n$. Hence $S^\mathbb{G}$
is the linear span of the spaces $S_n^\mathbb{G}$. By the previous
lemma we get that $S^\mathbb{G}\subseteq\AA_1$.
\end{proof}

\begin{corollary}
$\AA_1$ is finitely generated.
\end{corollary}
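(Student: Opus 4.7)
The plan is to combine the identification $\AA_1=S^{\mathbb{G}}$ from the preceding theorem with the classical Hilbert--Weyl theorem on finite generation of invariants of compact group actions. Recall that $S$ is generated as an $\mathbb{R}$-algebra by the finitely many eigenfunctions $f_1,\dots,f_t$ of $W$, and $\mathbb{G}=\Aut(J,W)$ is compact by Theorem~\ref{THM:AUT-COMP}.

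First I would observe that $\mathbb{G}$ acts linearly on the finite-dimensional real vector space $V=\text{span}(f_1,\dots,f_t)$: each $g\in\mathbb{G}$ preserves every eigenspace $E_{\lambda_r}$ and acts there by an orthogonal transformation, since $f\mapsto f^g$ preserves the $L^2$-inner product. This action extends naturally to an action by algebra automorphisms on the polynomial ring $R=\mathbb{R}[y_1,\dots,y_t]$, and the surjective evaluation homomorphism $\pi:~R\to S$ sending $y_i\mapsto f_i$ is $\mathbb{G}$-equivariant.

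Next, by the Hilbert--Weyl theorem applied to the compact group $\mathbb{G}$ acting linearly on $R$, the invariant subring $R^{\mathbb{G}}$ is finitely generated as an $\mathbb{R}$-algebra. The Reynolds operator $\mathcal{R}(p)=\int_{\mathbb{G}} p^g\,d\mu(g)$ (using the Haar measure $\mu$ on $\mathbb{G}$) is a well-defined linear projection $R\to R^{\mathbb{G}}$; it commutes with $\pi$, so for any $s\in S^{\mathbb{G}}$ with preimage $p\in R$ we have $\pi(\mathcal{R}(p))=\mathcal{R}(\pi(p))=\mathcal{R}(s)=s$. Thus $\pi$ restricts to a surjection $R^{\mathbb{G}}\to S^{\mathbb{G}}=\AA_1$, and $\AA_1$ is finitely generated as the homomorphic image of a finitely generated algebra.

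The main point to verify carefully is the applicability of the Hilbert--Weyl theorem, i.e.\ that the Reynolds average of a polynomial is again a polynomial; this is automatic because $\mathbb{G}$ preserves each finite-dimensional space of homogeneous polynomials of fixed degree, so the averaging happens inside a finite-dimensional vector space. Given this, the argument reduces to standard facts about invariant theory of compact groups: the image of $\mathbb{G}$ in $\mathrm{GL}(V)$ is a closed, hence compact Lie, subgroup of $O(V)$, to which Hilbert's classical finiteness theorem applies.
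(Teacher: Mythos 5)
Your proof is correct and follows essentially the same route as the paper: identify $\AA_1$ with $S^{\Gbb}$, note that $S$ is a finitely generated algebra on which the compact group $\Gbb$ acts by automorphisms, and invoke Hilbert's finiteness theorem for invariants of compact group actions. You simply spell out the intermediate steps (lifting to the polynomial ring, the Reynolds operator, pushing down along the equivariant surjection) that the paper leaves implicit.
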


\begin{proof}
The algebra $S$ is a finitely generated commutative algebra and the
compact group $\mathbb{G}$ acts on $S$ via automorphisms. Hilbert's
theorem on $\Gbb$-invariant rings implies that
$\mathbb{A}_1=S^\mathbb{G}$ is finitely generated.
\end{proof}

\end{document}